\PassOptionsToPackage{colorlinks=true,linkcolor=blue,citecolor=blue,urlcolor=blue,bookmarks=true,pdfencoding=unicode,hypertexnames=false}{hyperref}
\documentclass[pdflatex,sn-mathphys-num]{sn-jnl}

\usepackage[utf8]{inputenc}
\usepackage[T1]{fontenc}
\usepackage{lmodern}
\usepackage{microtype}
\usepackage{mathtools,amssymb,amsfonts,amsthm}
\usepackage{enumitem}
\usepackage{graphicx}
\usepackage{float}
\usepackage{url}
\usepackage{tikz}
\usetikzlibrary{arrows.meta, positioning, calc}
\usepackage{tikz-cd}
\usepackage{booktabs}
\usepackage{silence}
\newcommand{\Qp}{\mathbb{Q}_p}
\newcommand{\Zp}{\mathbb{Z}_p}
\newcommand{\Cp}{\mathbb{C}_p}
\newcommand{\Fp}{\mathbb{F}_p}

\newcommand{\PP}{\mathbb{P}}

\newcommand{\norm}[1]{\left\|#1\right\|}
\newcommand{\Ball}[2]{B(#1,#2)}
\newcommand{\cO}{\mathcal{O}}
\newcommand{\Int}{\mathrm{Int}}

\newcommand{\Res}{\mathrm{Res}}

\newcommand{\End}{\mathrm{End}}
\newcommand{\Teich}{\mathrm{Teich}}

\newcommand{\Z}{\mathbb{Z}}
\newcommand{\F}{\mathbb{F}}
\newcommand{\Gal}{\mathrm{Gal}}

\theoremstyle{thmstyleone}
\newtheorem{theorem}{Theorem}[section]
\newtheorem{proposition}[theorem]{Proposition}
\newtheorem{lemma}[theorem]{Lemma}
\newtheorem{corollary}[theorem]{Corollary}
\theoremstyle{thmstyletwo}
\newtheorem{example}[theorem]{Example}
\newtheorem{remark}[theorem]{Remark}
\newtheorem{observation}[theorem]{Observation}
\theoremstyle{thmstylethree}
\newtheorem{definition}[theorem]{Definition}



\title[Rational Interpreters for Discrete Dynamics]{Rational Interpreters for Discrete Dynamics: Existence, Exactness, and Decomposition over \texorpdfstring{$p$-adic}{p-adic} Fields}

\author*[1]{\fnm{J. Rogelio} \sur{P\'erez-Buend\'ia} (ORCID 0000-0002-7739-4779)}\email{rogelio.perez@cimat.mx}

\affil*[1]{\orgdiv{SECIHTI}, \orgname{Centro de Investigaci\'on en Matem\'aticas (CIMAT), Unidad M\'erida}, \orgaddress{\city{M\'erida}, \state{Yucat\'an}, \country{M\'exico}}}

\pacs[MSC 2020]{37P20 (Primary), 11S80, 13K05, 14G22, 37B10 (Secondary)}

\keywords{non-Archimedean dynamics; p-adic dynamics; discrete dynamical systems; functional graphs; Witt vectors; rigid analytic geometry; Runge approximation; good reduction; Chinese remainder theorem; congruence-preserving maps; inverse lifting problem; ball dynamics}

\abstract{We address an inverse problem in non-Archimedean dynamics: given a finite discrete dynamical system (equivalently, a functional graph on $N$ states), construct a continuous $p$-adic dynamical system whose residue-level behavior reproduces the prescribed transitions. Using the cylinder partition of $\cO_K$ (viewed as \emph{Witt cylinders} for unramified $K/\mathbb{Q}_p$), we encode states by pairwise disjoint closed balls and formalize an \textbf{interpreter} as a map sending each state ball into its target ball.

Our main existence result constructs rational interpreters that are analytic (hence pole-free) on the prescribed state cylinders, combining rigid-analytic Runge approximation with finite interpolation constraints. Under a linear-dominance condition on each cylinder, ball images are explicit and locally affine, leading to a robust classification of discrete behavior into contractive, indifferent, and expansive regimes. Good reduction provides a selection principle for natural interpreters; effective degree and height bounds for general rational interpreters remain open.

For composite alphabets we prove a \textbf{Dynamic Chinese Remainder Theorem} for congruence-preserving systems: the CRT isomorphism $\Theta:\mathbb{Z}/m\mathbb{Z}\xrightarrow{\sim}\prod_i\mathbb{Z}/p_i^{k_i}\mathbb{Z}$ (for $m=\prod p_i^{k_i}$) yields a factorization of the \emph{dynamics} (equivalently, the functional graph) on $\mathbb{Z}/m\mathbb{Z}$ into dynamics on the prime-power components, compatible with reduction. Finally, we discuss an inverse-limit (profinite) extension: compatible towers define a $1$-Lipschitz map on $\mathbb{Z}_p$, while selecting compatible analytic/rational interpreters across levels becomes a separate problem.
}

\begin{document}
\raggedbottom

\maketitle

\tableofcontents

\section{Introduction}
\label{sec:introduction}

\subsection{Discrete Dynamics and the Lifting Problem}

Functional graphs---directed graphs where every vertex has out-degree one---are the standard combinatorial representation of finite discrete dynamical systems (DDS)~\cite{JarrahLaubenbacher2007}. They model cellular automata, regulatory networks, and orbits of rational maps over finite fields, and sit at the interface of discrete dynamics, arithmetic dynamics, and $p$-adic analysis. The \emph{lifting problem} is this: given a finite discrete system, can one build a continuous dynamical system over a local field whose ``shadow'' at a chosen resolution is exactly that discrete system? In algebraic terms: given a discrete map $F$ on a finite ring $R$ (e.g.\ $R = \mathbb{Z}/p^k\mathbb{Z}$), find a continuous map $\phi$ on the valuation ring such that the depth-$k$ reduction $\pi^{(k)}$ satisfies $\pi^{(k)} \circ \phi = F \circ \pi^{(k)}$. Under the ball-semantics notion (interpretation with inclusion), existence holds over $K=\mathbb{C}_p$ (Theorem~\ref{thm:analytic_existence}, Theorem~\ref{thm:runge}); the construction is given by a piecewise affine interpreter followed by Runge approximation. The Runge step does not furnish explicit degree or height bounds; see Section~\ref{sec:discussion} and Remark~\ref{rem:degree_bound_limits}. Exact ball mapping is an open property under a linear dominance condition.

\begin{sloppypar}
The ring of integers $\cO_K$ of a finite unramified extension $K/\Qp$ (with residue field $\F_q$), identified with Witt vectors $\cO_K \cong W(\F_q)$ (via the Witt isomorphism in the unramified case), supplies the \emph{metric and hierarchical structure} that polynomial interpolation over $\F_q$ alone cannot, and it aligns with multi-valued logic and $p$-adic models in biology~\cite{Thomas1973, Dragovich2007, Khrennikov2009, YurovaAxelssonKhrennikov2024codes, YurovaAxelssonKhrennikov2024universal}.

\begin{quote}
\begin{sloppypar}
A discrete configuration is encoded as a \emph{Witt cylinder} (a $p$-adic ball) in $\cO_K$. A discrete update rule $F$ is interpreted by a global rational or analytic map $\phi$ that sends cylinders to cylinders. The hierarchy of Witt vectors ensures that dynamics are compatible across resolution levels: refining or coarsening the discrete grid corresponds to changing the radius of the balls.\allowbreak
\end{sloppypar}
\end{quote}
We use the term \emph{Witt cylinder} for the cylinder balls in $\cO_K \simeq W(\kappa)$ (for unramified $K$), i.e., a depth-$n$ congruence ball in $\cO_K$ (residue classes modulo $p^n$ viewed as closed balls of radius $p^{-n}$).
\end{sloppypar}

Viewing the discrete system as ball-to-ball dynamics brings in the full machinery of $p$-adic analysis---multipliers, attractors, and stability theory. Combinatorially identical states then acquire distinct analytical signatures: a contractive state is robust to perturbations, an expansive one is sensitive. The continuous lift thus adds a quantitative layer to the discrete graph.

\subsection{Scientific Context: From Berkovich Space to Reduction Theory}

Non-Archimedean dynamics has been transformed by the Berkovich projective line~\cite{Berkovich1990} and Fatou-Julia theory~\cite{RiveraLetelier2003}. The literature has focused largely on the \emph{forward problem}: given $\phi \in K(z)$, describe its action on residue rings (e.g., Jones-Manes~\cite{JonesManes2014}, Hsiao~\cite{Hsiao2013}). We address the \emph{inverse problem}: given a finite functional graph $G$, construct a rational map $\phi$ whose ball dynamics realizes $G$ at a chosen resolution. This inverse lifting problem ties in with reduction theory: Benedetto~\cite{BenedettoReduction} showed that maps with good reduction behave uniformly on the residue field; P\'erez-Buend\'ia~\cite{PerezBuendia2025Arboreal} extended this to arboreal representations. In our setting, \textbf{interpretation with inclusion} (ball images contained in targets) is closely tied to good reduction: strict good reduction (degree-preserving reduction) prevents the continuous map from ``collapsing'' discrete states and offers a selection principle when many interpreters exist (Section~\ref{subsec:reduction}).\footnote{When we invoke reduction as a selection principle, we work with strict good reduction; see Section~\ref{subsec:reduction} and Example~\ref{ex:non_strict_good_reduction}.} The same setting yields \emph{profinite coherence}: a discrete system can be viewed as a compatible tower of maps, and the inverse limit induces global continuous dynamics on the unramified base $\cO_{K_0}$ (viewed inside $\cO_{\mathbb{C}_p}$ via Remark~\ref{rem:field_base}), in line with $p$-adic biological models~\cite{AnashinKhrennikov} and profinite dynamics~\cite{Poonen2014}.

\subsection{Main Contributions}

The inverse lifting problem is solvable and the solution is decomposable; exact cylinder mapping is stable under linear dominance (Section~\ref{sec:robust}). The main contributions are the following.
\begin{itemize}[leftmargin=2em]
\item \textbf{Witt Vector Semantics:} Multi-state alphabets are encoded via unramified $p$-adic bases (finite-level truncations of $\cO_{K_0}\cong W(\overline{\F}_p)$), with discrete states identified to Witt cylinders.
\item \textbf{Interpretation:} The base notion is that of an \emph{interpreter} (ball images meet targets), with three types per ball (contractive, indifferent, expansive) and global conditions (interpretation with inclusion, exact interpretation) phrased as congruence relations in the Witt ring.
\item \textbf{Existence via Runge Approximation:} Density of rational functions on affinoid domains (rigid Runge theorem) yields, for any finite functional graph, a rational interpreter on a finite union of disjoint balls; the construction is piecewise affine interpreter then Runge approximation (non-effective bounds).
\item \textbf{Robust Exactness:} A ``linear dominance'' criterion ensures that exact cylinder mapping is an open, stable property in the space of analytic maps.
\item \textbf{Dynamic CRT and global product synthesis:} For composite alphabets, the interpretation lifts to a product of local domains. Proposition~\ref{prop:crt_product} establishes that the continuous phase space for a system of size $m = \prod p_i^{k_i}$ is the product space $\prod_i \cO_i$ (each $\cO_i \cong \mathbb{Z}_{p_i}$ or an unramified extension); the resulting horizontal vs.\ vertical decomposition is discussed in Section~\ref{subsec:horizontal_vertical}.
\item \textbf{Reduction Principle:} Good reduction provides a selection principle for preferred interpreters when many exist.
\end{itemize}

\subsection{Arithmetic information in functional graphs}
\label{sec:arith_graphs_lifting}
Functional graphs over residue rings/fields encode solvability patterns of polynomial and rational equations.
The interpreter constraints on balls/cylinders
force a prescribed finite-level solvability/branching pattern, while vertical refinement asks for compatibility across depths.

\paragraph{Indegree as number of solutions.}\label{par:indegree_solutions}
Let $q=p^f$ and let $f:\F_q\to\F_q$ be a function (e.g.\ induced by a polynomial in $\F_q[x]$).
In the functional graph of $f$, the indegree of a node $a\in\F_q$ is
\[
\#f^{-1}(a)=\#\{x\in\F_q:\ f(x)=a\},
\]
so leaves (nodes of indegree $0$) are precisely the values not attained by $f$; equivalently, they measure the failure of surjectivity of $f$ over $\F_q$, i.e.\ the obstructions to solvability of $f(x)=a$ in $\F_q$.

\paragraph{Example: $x\mapsto x^2$ and quadratic residues.}\label{ex:square_map_fp}
For odd $p$ and $f(x)=x^2$ on $\F_p$, the leaves are exactly the non-quadratic residues.
Equivalently, each $a\in\F_p^\times$ has indegree $2$ if $a$ is a square and indegree $0$ otherwise, while $0$ has indegree $1$.

\paragraph{Example: $x\mapsto x^2+1$ and extension of scalars.}\label{ex:x2_plus1_extension}
For odd $p$ and $f(x)=x^2+1$, solvability of $f(x)=0$ is solvability of $x^2=-1$, which depends on $p\bmod 4$:
if $p\equiv 1\ (\mathrm{mod}\ 4)$ then $0$ has indegree $2$ in $\F_p$, whereas if $p\equiv 3\ (\mathrm{mod}\ 4)$ then $0$ is a leaf in $\F_p$
but becomes non-leaf after passing to $\F_{p^2}$.
More generally, enlarging the residue field can attach new preimages to previously-leaf nodes, reflecting refinement of solvability
data under unramified extension.

\paragraph{Bridge to Witt cylinders and lifting.}\label{par:bridge_witt_lifting}
Finite alphabets and congruence classes are modeled by Witt cylinders in the unramified base $\cO_{K_0}\cong W(\overline{\F}_p)$
(see Remark~\ref{rem:field_base}). A discrete transition system over $\Z/p^k\Z$ is then finite-resolution data, and vertical refinement
(dynamical Hensel-type lifting) asks when prescribed cycles/branches persist across depths.
Existence over $\Cp$ produces analytic/rational maps whose behavior is constrained on these cylinders, thereby linking finite arithmetic dynamics
to non-Archimedean analytic dynamics.

\paragraph{Horizontal vs.\ vertical refinement.}\label{par:horizontal_vs_vertical_refinement}
There are two distinct arithmetic refinements of the branching/solvability data encoded by a functional graph.
\emph{Horizontal refinement} enlarges the residue field $\kappa=\F_p$ to $\kappa'=\F_{p^f}$ (unramified extension), which can create new preimages because equations may acquire solutions after extension of scalars (cf.\ Example~\ref{ex:x2_plus1_extension}).
\emph{Vertical refinement} increases the depth $n$ in the truncation tower $W_{n+1}(\kappa)\to W_n(\kappa)$ (lifting mod $p^n\to p^{n+1}$), asking for Henselian persistence of prescribed branches/cycles.
These two operations are logically independent: residue-field extension corresponds to changing the Witt base $W(\kappa)$, and depth refinement to refining cylinders via truncation.

\subsection{Organization}

Notation and preliminaries are in Sections~\ref{sec:encoding}--\ref{sec:prelim}. The analytic core---semantics (Section~\ref{sec:semantics}), existence via Runge (Section~\ref{sec:existence}), and robust exactness (Section~\ref{sec:robust})---leads to the arithmetic extensions in Section~\ref{sec:composite_alphabets}: Witt vectors, Dynamic CRT, horizontal vs.\ vertical decomposition (Section~\ref{subsec:horizontal_vertical}), and pro-$p$ towers with dynamical Hensel. Section~\ref{sec:closing_the_loop} treats reduction and selection criteria; Section~\ref{sec:examples} gives worked examples. Section~\ref{sec:discussion} discusses relations to other work, open problems, and the passage to radius-zero (profinite) interpretation (Appendix~\ref{app:profinite_addendum}); see also the subsection on arithmetic information in Section~\ref{sec:arith_graphs_lifting}.
 \section{Functional Graphs and \texorpdfstring{$p$}{p}-adic Hierarchical Encoding}
\label{sec:encoding}

Functional graphs---directed graphs $G_F = (X, E)$ with $E = \{(x, F(x)) : x \in X\}$---are the combinatorial representation of finite discrete dynamical systems $F: X \to X$. Every vertex has exactly one outgoing edge; in particular, \textbf{fixed points} (vertices $x$ with $F(x)=x$) are represented by a \textbf{self-loop} (the edge $(x,x)$). We adopt this convention throughout the paper and in all figures. In multiscale models, $X$ often has a product structure $X = \Sigma^n$, where $\Sigma = \{0, \dots, p-1\}$.

Fixing a coordinate order, we identify $x = (x_1, \dots, x_n) \in \Sigma^n$ with a $p$-adic integer via the encoding map:
\begin{equation}
\label{eq:encoding}
\mathrm{enc}_n(x) = \sum_{i=1}^n x_i p^{i-1}.
\end{equation}
\begin{sloppypar}
Each configuration is associated with a \emph{state cylinder} $B_x := \Ball{\mathrm{enc}_n(x)}{p^{-n}} \subset \Cp$.
\end{sloppypar}

\begin{lemma}[Cylinder Partition and Nesting]
\label{lem:cylinder_partition}
\begin{enumerate}[label=(\roman*)]
    \item The balls $\{B_x\}_{x \in \Sigma^n}$ form a partition of $\mathbb{Z}_p$ into $p^n$ pairwise disjoint closed balls of radius $p^{-n}$.
    \item For $m < n$, the truncation map $\pi_{n \to m}: \Sigma^n \to \Sigma^m$ (retaining the first $m$ $p$-adic digits) satisfies $B_x \subseteq B_{\pi_{n \to m}(x)}$.
\end{enumerate}
\end{lemma}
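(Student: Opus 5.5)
Both parts reduce to the ultrametric inequality together with the fact that $\mathrm{enc}_n$ is a bijection from $\Sigma^n$ onto $\{0,1,\dots,p^n-1\}$, a complete set of representatives of $\mathbb{Z}_p/p^n\mathbb{Z}_p$. Here $B_x$ denotes the closed ball $\{z : |z-\mathrm{enc}_n(x)|_p\le p^{-n}\}$. Since the lemma speaks of a partition of $\mathbb{Z}_p$, we intersect with $\mathbb{Z}_p$, i.e.\ read $B_x\cap\mathbb{Z}_p = \mathrm{enc}_n(x)+p^n\mathbb{Z}_p$.

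\textbf{Part (i).} First we check that $\mathrm{enc}_n$ is injective with image $\{0,\dots,p^n-1\}$. This is uniqueness and existence of base-$p$ expansions: the sum $\sum x_i p^{i-1}$ with $0\le x_i\le p-1$ lies in $[0,p^n-1]$, and the count $p^n$ matches. Next, for $a\in\mathbb{Z}$ the condition $|z-a|_p\le p^{-n}$ with $z\in\mathbb{Z}_p$ is equivalent to $z\equiv a \pmod{p^n\mathbb{Z}_p}$, so $B_x\cap\mathbb{Z}_p$ is exactly a residue class mod $p^n$. Since $\mathbb{Z}_p/p^n\mathbb{Z}_p\cong\mathbb{Z}/p^n\mathbb{Z}$, the $p^n$ classes cover $\mathbb{Z}_p$. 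Disjointness follows from the ultrametric dichotomy: two closed balls of equal radius are either equal or disjoint, and they are equal only if their centers satisfy $|a-b|_p\le p^{-n}$. For distinct $a,b\in[0,p^n-1]$ we have $0<|a-b|<p^n$, so $p^n\nmid a-b$ and $|a-b|_p>p^{-n}$. The same dichotomy gives disjointness of the full balls in $\mathbb{C}_p$ as well.

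\textbf{Part (ii).} Here $\mathrm{enc}_n(x)-\mathrm{enc}_m(\pi_{n\to m}(x))=\sum_{i=m+1}^n x_ip^{i-1}$, which is divisible by $p^m$. Hence the center of $B_x$ lies in $B_{\pi_{n\to m}(x)}$. Now take any $z\in B_x$. By the ultrametric inequality,
\[
|z-\mathrm{enc}_m(\pi_{n\to m}(x))|_p\le\max\bigl(p^{-n},\,p^{-m}\bigr)=p^{-m},
\]
so $z\in B_{\pi_{n\to m}(x)}$, which gives the inclusion.

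There is no real obstacle here. The only point needing care is the interpretation issue noted at the start: the balls are defined in $\mathbb{C}_p$, so the partition statement in (i) must be read after intersecting with $\mathbb{Z}_p$, and the argument above does exactly that.
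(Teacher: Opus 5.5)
Your proof is correct. The paper states this lemma without a written proof, and your argument is the standard reasoning it relies on: $\{0,\dots,p^n-1\}$ as a complete set of representatives of $\mathbb{Z}_p/p^n\mathbb{Z}_p$, the ultrametric equal-radius dichotomy (the paper's Lemmas~\ref{lem:nesting} and~\ref{lem:ball_coincidence}), and one application of the strong triangle inequality for (ii). Your reading of (i) as a statement about $B_x\cap\mathbb{Z}_p=\mathrm{enc}_n(x)+p^n\mathbb{Z}_p$ also matches the paper's own later identification of the state cylinders with the fibers of the reduction $\pi^{(n)}:\mathbb{Z}_p\to\mathbb{Z}/p^n\mathbb{Z}$.
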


The radius $p^{-n}$ acts as a \emph{resolution parameter}. The state cylinders $B_x$ are the fibers of the depth-$n$ reduction $\pi^{(n)}: \Zp \to \mathbb{Z}/p^n\mathbb{Z}$; the equivalence between interpretation with inclusion and commutativity of this reduction is given in Section~\ref{sec:composite_alphabets} (Lemma~\ref{lem:cylinders_as_fibers}). Witt vectors and the Dynamic CRT are developed in that section.
 \section{Ultrametric Preliminaries}
\label{sec:prelim}

\subsection{\texorpdfstring{$p$}{p}-adic Balls and Basic Geometry}
Let $K$ be a complete algebraically closed non-Archimedean field (for instance, $K = \Cp$) with absolute value $|\cdot|$ normalized such that $|p| = p^{-1}$. For $a \in K$ and $r > 0$, we denote the closed ball by $\Ball{a}{r} = \{z \in K : |z-a| \leq r\}$. The ultrametric nature of the field implies the following fundamental nesting property.

\begin{lemma}[Nesting Property]
\label{lem:nesting}
If two balls $\Ball{a}{r}$ and $\Ball{b}{s}$ have a non-empty intersection, then one must be contained in the other.
\end{lemma}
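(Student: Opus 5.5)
The argument rests on the strong triangle inequality $|x-y|\le\max(|x|,|y|)$ in $K$. Its key consequence is that every point of a closed ball is a center: if $c\in\Ball{a}{r}$, then $\Ball{c}{r}=\Ball{a}{r}$. I will prove this first. Take $c$ with $|c-a|\le r$. For $z\in\Ball{a}{r}$ we have $|z-c|\le\max(|z-a|,|a-c|)\le r$, so $\Ball{a}{r}\subseteq\Ball{c}{r}$. The reverse inclusion is the same computation with the roles of $a$ and $c$ swapped.

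With this in hand the lemma is a short case split. Suppose $c\in\Ball{a}{r}\cap\Ball{b}{s}$. By symmetry of the statement, assume without loss of generality that $r\le s$. By the recentering fact, $\Ball{a}{r}=\Ball{c}{r}$ and $\Ball{b}{s}=\Ball{c}{s}$. Since $r\le s$, the definition directly gives $\Ball{c}{r}\subseteq\Ball{c}{s}$. Hence $\Ball{a}{r}\subseteq\Ball{b}{s}$. The case $s\le r$ gives the opposite containment in the same way.

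No step is a real obstacle; the only point needing care is the recentering fact. I will also note that nothing beyond the ultrametric inequality is used. In particular, completeness and algebraic closure of $K$ play no role, so the lemma holds verbatim in $\Qp$, $\Zp$ and any unramified extension. This lets us later apply it to the state cylinders $B_x$ of Lemma~\ref{lem:cylinder_partition}, where it shows that distinct cylinders of the same radius are disjoint.
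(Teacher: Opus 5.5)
Your proof is correct and follows essentially the paper's route: the paper only cites standard references and notes that the ultrametric inequality gives $|a-b|\le\max(r,s)$, after which the ball of larger radius contains the other. Recentering both balls at a common point $c$ supplies exactly the details that sketch leaves implicit, and, as you note, the argument works in any ultrametric space.
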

\begin{proof}
This is a standard consequence of the ultrametric triangle inequality (see, e.g.,~\cite{BGR} or~\cite{FvdP}). If $|a-b| \leq \max(r,s)$, then the ball with the larger radius contains the other.
\end{proof}

\begin{remark}[Nesting and Profinite Towers]
\label{rem:nesting_profinite}
\begin{sloppypar}
The nesting property (Lemma~\ref{lem:nesting}) is fundamental to the hierarchical structure. In Section~\ref{sec:encoding}, we use balls of radius $p^{-n}$ to encode discrete configurations. The nesting structure $\{B_x\}_{x \in \Sigma^n}$ naturally forms an inverse system: for $m < n$, each ball $B_x$ (radius $p^{-n}$) is contained in a unique ball $B_{\pi_{n \to m}(x)}$ (radius $p^{-m}$).

This hierarchical structure is the geometric foundation for the profinite coherence developed in Section~\ref{sec:composite_alphabets}, where compatible towers of discrete maps induce global continuous dynamics on the unramified base $\mathcal{O}_{K_0} \cong W(\kappa)$ (see Remark~\ref{rem:field_base}).
\end{sloppypar}
\end{remark}

\subsection{Affine Maps and Ball Images}
The behavior of balls under analytic maps is best understood by starting with affine models.
\begin{lemma}[Affine Images]
\label{lem:affine_ball}
Let $\psi(z) = \beta + u(z-\alpha)$ be an affine map with $u \neq 0$. Then $\psi(\Ball{\alpha}{r}) = \Ball{\beta}{|u|r}$.
\end{lemma}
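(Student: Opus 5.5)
The plan is to prove the two inclusions $\psi(\Ball{\alpha}{r}) \subseteq \Ball{\beta}{|u|r}$ and $\Ball{\beta}{|u|r} \subseteq \psi(\Ball{\alpha}{r})$ directly from the multiplicativity of the absolute value. The standing hypotheses are that $K$ is a field with $u \neq 0$, so $\psi$ is a bijection of $K$ with explicit inverse
\[
\psi^{-1}(w) = \alpha + u^{-1}(w-\beta).
\]
No ultrametric input is needed beyond $|xy| = |x|\,|y|$. In particular, the nesting property (Lemma~\ref{lem:nesting}) is not required here.

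\textbf{Forward inclusion.} For $z \in \Ball{\alpha}{r}$ we have
\[
|\psi(z) - \beta| = |u(z-\alpha)| = |u|\,|z-\alpha| \le |u| r,
\]
so $\psi(z) \in \Ball{\beta}{|u|r}$.

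\textbf{Reverse inclusion.} Given $w \in \Ball{\beta}{|u|r}$, set $z := \alpha + u^{-1}(w-\beta)$. Then $\psi(z) = w$, and
\[
|z-\alpha| = |u|^{-1}\,|w-\beta| \le |u|^{-1}|u| r = r,
\]
so $z \in \Ball{\alpha}{r}$ and hence $w \in \psi(\Ball{\alpha}{r})$.

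There is no real obstacle in this argument. The only points to state explicitly are the use of $u \neq 0$, which makes $|u| > 0$ and $u^{-1}$ available, and the fact that the preimage $z$ constructed above lies in $K$, the ambient field of the balls. If one later restricts to balls inside $\Zp$ or $\cO_K$, this step would need $\alpha, \beta, u$ to lie in the relevant field. Since the statement is over $K$ (for instance $K = \Cp$), this is automatic.
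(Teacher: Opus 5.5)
Your proof is correct and is essentially the same as the paper's. Both argue the forward inclusion from $|u(z-\alpha)| = |u|\,|z-\alpha|$, and the reverse inclusion via the explicit preimage $z = \alpha + u^{-1}(w-\beta)$.
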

\begin{proof}
For any $z \in \Ball{\alpha}{r}$, we have $|\psi(z) - \beta| = |u(z-\alpha)| = |u||z-\alpha| \leq |u|r$, hence $\psi(\Ball{\alpha}{r}) \subseteq \Ball{\beta}{|u|r}$. Conversely, for any $w \in \Ball{\beta}{|u|r}$, the point $z = \alpha + u^{-1}(w-\beta)$ satisfies $|z-\alpha| = |u|^{-1}|w-\beta| \leq r$, so $z \in \Ball{\alpha}{r}$ and $\psi(z) = w$.
\end{proof}

\subsection{Non-Archimedean Cauchy Estimates}
The control of Taylor coefficients is essential for our results on robust exactness. Let $f \in \cO(\Ball{a}{r})$ be an analytic function with expansion $f(z) = \sum_{k \ge 0} c_k (z-a)^k$.

\begin{lemma}[Cauchy Bound]
\label{lem:cauchy}
For all $k \ge 0$, $|c_k| r^k \leq \norm{f}_{\Ball{a}{r}}$, where $\norm{f}$ denotes the supremum norm on the ball.
\end{lemma}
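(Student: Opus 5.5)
We will prove the non-Archimedean Cauchy bound $|c_k| r^k \le \norm{f}_{\Ball{a}{r}}$. The plan is to reduce to the Gauss norm, $\norm{f}_{\Ball{a}{r}} = \max_k |c_k| r^k$, which is the standard maximum modulus principle for power series over an algebraically closed complete field (cf.~\cite{BGR}). Once this is available, the statement is immediate.

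First we normalize. Translating by $z\mapsto z+a$, we may assume $a=0$. Since $K$ is algebraically closed, its value group is dense and every $r$ in $|K^\times|$ equals $|\lambda|$ for some $\lambda\in K$. In that case substitute $z=\lambda w$: the coefficients become $c_k\lambda^k$ with $|c_k\lambda^k| = |c_k| r^k$, and the ball becomes the closed unit ball. So it suffices to treat $r=1$ with $f=\sum c_k w^k$ and $|c_k|\to 0$, which is the convergence condition for $f\in\cO(\Ball{0}{1})$. The paper only needs radii of the form $p^{-n}$, which already lie in the value group. For completeness, a general $r$ can be handled by taking radii $r'\uparrow r$ in $|K^\times|$ and passing to the limit.

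The inequality $\norm{f}\le \max_k |c_k|$ is trivial from the ultrametric inequality. For the reverse, which is the direction we need, we will show that for each $k$ there is a point $w$ with $|w|\le 1$ and $|f(w)| \ge |c_k|$. Let $M=\max_j|c_j|$. The maximum is attained because $|c_j|\to 0$. If $M=0$ there is nothing to prove. Otherwise pick $j_0$ with $|c_{j_0}|=M$ and scale by $c_{j_0}^{-1}$, so that all coefficients lie in $\cO_K$, $M=1$, and $|c_j|<1$ for all but finitely many $j$. Reducing modulo the maximal ideal gives a nonzero polynomial $\bar f\in\kappa[w]$, where $\kappa=\overline{\F}_p$ is the residue field. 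Since $\kappa$ is infinite, there is $\bar w\in\kappa$ with $\bar f(\bar w)\neq 0$. Any lift $w\in\cO_K$ of $\bar w$ then satisfies $|f(w)|=1=M$. This gives $\norm{f}=M\ge |c_k|$ for every $k$.

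The main step, and the only non-formal one, is the existence of a point where $|f|$ reaches the Gauss norm. It relies on the residue field being infinite, which holds because $K$ is algebraically closed. Everything else is the rescaling bookkeeping, together with the limiting argument when $r\notin|K^\times|$. Alternatively, one can avoid the limit altogether by simply citing the Gauss norm identity from~\cite{BGR}.
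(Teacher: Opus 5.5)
Your proof is correct, but it takes a different route from the paper. The paper mostly cites the estimate from the literature and supports it with the residue analogy $|c_k|=|\Res_{z=a}(z-a)^{-k-1}f(z)|\le r^{-k}\norm{f}_{\Ball{a}{r}}$, attributed to ``the ultrametric property.'' You instead prove the stronger Gauss-norm identity $\norm{f}_{\Ball{a}{r}}=\max_k|c_k|r^k$. You rescale to the unit ball, reduce modulo the maximal ideal, and pick a non-root of the nonzero reduced polynomial. For radii outside $|K^\times|$ you let $r'\uparrow r$ and use $\norm{f}_{\Ball{a}{r'}}\le\norm{f}_{\Ball{a}{r}}$.

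What your version gains is that it exposes the one non-formal input, namely that the residue field is infinite. The paper's residue heuristic hides this, and ultrametricity alone cannot suffice. Over $\Qp$, with the supremum taken over $\Zp$, the series $z^p-z$ has $|c_1|=1$ but $\norm{z^p-z}_{\Zp}\le p^{-1}$ by Fermat.

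If you want an argument closer to the paper's residue formula, there is a rigorous discrete version of it. For $|\lambda|=r$ and $p\nmid n$,
\[
\frac1n\sum_{\zeta^n=1}\zeta^{-k}f(a+\zeta\lambda)=\sum_{j\equiv k \bmod n}c_j\lambda^j .
\]
Its absolute value is at most $\norm{f}_{\Ball{a}{r}}$ because $|n|=1$, and letting $n\to\infty$ removes the tail. This uses the same hypothesis, in the form of enough roots of unity of order prime to $p$ in $K$.

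Two cosmetic points:
\begin{enumerate}
\item The residue field is $\overline{\F}_p$ only when $K=\Cp$. In general it is some algebraically closed field of characteristic $p$, and you only use that it is infinite.
\item The step ``any lift $w$ satisfies $|f(w)|=1$'' should note why the infinite tail of non-unit coefficients sums into the maximal ideal: $\sup_{j>J}|c_j|<1$, and this supremum is attained.
\end{enumerate}
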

\begin{proof}
This is the non-Archimedean Cauchy estimate central to rigid analytic geometry and to $p$-adic dynamics (see Proposition~3.17 and Corollary~3.21 in~\cite{BenedettoDynamics}, and~\cite{AnashinKhrennikov}; for the rigid setting see also~\cite{BGR}).\footnote{The Archimedean analogue is the Cauchy inequality for holomorphic functions on a disk.} The bound follows from the fact that $|c_k| = |\Res_{z=a} (z-a)^{-k-1} f(z)| \leq r^{-k} \norm{f}_{\Ball{a}{r}}$ by the ultrametric property.
\end{proof}

\begin{lemma}[Coefficient Stability]
\label{lem:cauchy_diff}
If $f, g$ are analytic functions on $\Ball{a}{r}$ such that $\sup_{\Ball{a}{r}} |f-g| \leq \varepsilon$, then for each $k \ge 0$, we have
\[
|c_k(f) - c_k(g)| r^k \leq \varepsilon.
\]
In particular, $|f'(a) - g'(a)| r \leq \varepsilon$ and $|f(a) - g(a)| \leq \varepsilon$.
\end{lemma}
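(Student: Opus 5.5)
The plan is to apply Lemma~\ref{lem:cauchy} to the difference $h := f-g$. Since $f$ and $g$ are both analytic on $\Ball{a}{r}$, so is $h$. Its Taylor expansion at $a$ is
\[
h(z)=\sum_{k\ge 0}\bigl(c_k(f)-c_k(g)\bigr)(z-a)^k .
\]
This identity holds because the Taylor coefficients at a fixed centre are linear in the function: the two convergent expansions can be subtracted termwise. It is also forced by uniqueness of power series coefficients, since an analytic function on a ball determines its expansion there. So $c_k(h)=c_k(f)-c_k(g)$ for every $k$.

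Next, the Cauchy bound of Lemma~\ref{lem:cauchy}, applied to $h$, gives $|c_k(h)|\,r^k\le \norm{h}_{\Ball{a}{r}}$. The hypothesis says exactly that $\norm{h}_{\Ball{a}{r}}=\sup_{\Ball{a}{r}}|f-g|\le\varepsilon$, which yields the main inequality for each $k\ge 0$.

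The two special cases come from reading off small $k$. For $k=0$ we have $c_0(f)=f(a)$, giving $|f(a)-g(a)|\le\varepsilon$. This case could even be read directly from the hypothesis by evaluating at $z=a$. For $k=1$ we have $c_1(f)=f'(a)$, by termwise differentiation of the convergent power series at its centre. This gives $|f'(a)-g'(a)|\,r\le\varepsilon$.

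There is no real obstacle here. The only point needing care is to check that Lemma~\ref{lem:cauchy} is stated for an arbitrary analytic function on the ball, so that it applies to $h$. One should also confirm that the sup norm of $h$ is taken over the same ball $\Ball{a}{r}$ (with $r$ in the value group, or interpreted appropriately in $K=\Cp$, where every positive real radius is admissible up to density). Both points hold as stated.
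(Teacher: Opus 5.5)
Your proposal is correct and follows the paper's proof: apply the Cauchy bound (Lemma~\ref{lem:cauchy}) to $h=f-g$, whose Taylor coefficients at $a$ are $c_k(f)-c_k(g)$, and use $\norm{h}_{\Ball{a}{r}}\le\varepsilon$. Your identification of $c_0=f(a)$ and $c_1=f'(a)$ for the two special cases spells out a step the paper leaves implicit.
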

\begin{proof}
This follows by applying Lemma~\ref{lem:cauchy} to the difference $h = f-g$, noting that the coefficients of $h$ are exactly $c_k(f) - c_k(g)$.
\end{proof}
 \section{Semantics of Interpretation}
\label{sec:semantics}

\subsection{The General Setting}
The notion of a continuous map interpreting a discrete transition rule is formalized here through ball dynamics.

\begin{definition}[Ball System]
A \emph{ball system} is a triple $(\mathcal{B}, \tau, \mathcal{B}')$ where:
\begin{enumerate}
    \item $\mathcal{B} = \{B_i = \Ball{a_i}{r_i}\}_{i=1}^N$ is a finite collection of pairwise disjoint closed balls in $\Cp$.
    \item $\tau: \{1, \dots, N\} \to \{1, \dots, N\}$ is a discrete transition rule.
    \item $\mathcal{B}' = \{B_i' = \Ball{b_i}{t_i}\}_{i=1}^N$ is a family of target closed balls.
\end{enumerate}
\end{definition}

In the context of the hierarchical encoding discussed in Section~\ref{sec:encoding}, $B_i$ are the cylinders $B_x$ and $\tau$ is the update rule $F$ on the configurations.

\subsection{Interpretation and Three Interpretation Types}
A map is an \emph{interpreter} when its ball images meet the prescribed targets (non-empty intersection). The ultrametric nesting property forces a unique containment relation at each ball, which yields three interpretation types.

\begin{definition}[Interpreter]
\label{def:interpreter}
Let $(\mathcal{B}, \tau, \mathcal{B}')$ be a ball system. A map $\phi$ that is analytic in a neighborhood of $U = \bigcup B_i$ \emph{interprets} $\tau$ if for every $i$,
\[
\phi(B_i) \cap B'_{\tau(i)} \neq \varnothing.
\]
In particular, if $\phi$ is rational, then ``analytic in a neighborhood of $U$'' means that
$\phi$ has no poles on some open neighborhood of $U$ (hence no poles on $U$).
We say that $\phi$ has, at ball $i$,
\begin{enumerate}[label=(\roman*)]
    \item \textbf{contractive interpretation} if $\phi(B_i) \subsetneq B'_{\tau(i)}$ (image strictly inside target);
    \item \textbf{indifferent interpretation} if $\phi(B_i) = B'_{\tau(i)}$ (image equals target);
    \item \textbf{expansive interpretation} if $B'_{\tau(i)} \subsetneq \phi(B_i)$ (target strictly inside image).
\end{enumerate}
\end{definition}

\begin{remark}[Image structure under linear dominance]
\label{rem:image_ball}
For the interpreters constructed in Sections~\ref{sec:existence}--\ref{sec:robust}, $\phi$ is analytic without poles on each $B_i$ and satisfies \emph{linear dominance} (Definition~\ref{def:dominance}), so $\phi(B_i)$ is exactly a ball. Linear dominance implies local invertibility (the term $|\phi'(a_i)|$ dominates higher-order terms); by the ultrametric inverse function theorem (see, e.g., \cite{BGR} or \cite{BenedettoDynamics}),\footnote{The Archimedean analogue is the classical inverse function theorem.} $\phi(B_i) = \Ball{\phi(a_i)}{|\phi'(a_i)| r_i}$. Lemma~\ref{lem:nesting} then gives that for each $i$ one of $\phi(B_i)$ and $B'_{\tau(i)}$ contains the other, so the three interpretation types above apply.
Equivalently, if $\phi(B_i)=\Ball{\phi(a_i)}{s_i}$ with $s_i=|\phi'(a_i)|\,r_i$, then the three
types at $i$ are determined by the ratio
\[
\sigma_i:=\frac{s_i}{t_{\tau(i)}}=\frac{|\phi'(a_i)|\,r_i}{t_{\tau(i)}}:\quad
\begin{aligned}[t]
\sigma_i&<1 \Longleftrightarrow \text{contractive},\\
\sigma_i&=1 \Longleftrightarrow \text{indifferent},\\
\sigma_i&>1 \Longleftrightarrow \text{expansive}.
\end{aligned}
\]
\end{remark}

\begin{remark}[Inclusion and exact interpretation]
\label{rem:strong_exact}
We say that $\phi$ \emph{interprets $\tau$ with inclusion} if $\phi$ interprets $\tau$ and at every $i$ the interpretation is contractive or indifferent, i.e.\ $\phi(B_i) \subseteq B'_{\tau(i)}$ for all $i$. We say that $\phi$ \emph{exactly interprets} $\tau$ if $\phi$ interprets $\tau$ and at every $i$ the interpretation is indifferent, i.e.\ $\phi(B_i) = B'_{\tau(i)}$ for all $i$. Thus: exact $\Rightarrow$ interpreter with inclusion $\Rightarrow$ interpreter; the converses do not hold in general.
\end{remark}

\begin{table}[ht]
\centering
\small
\begin{tabular}{@{}p{4.2cm}p{0.62\textwidth}@{}}
\toprule
\textbf{Notion} & \textbf{Condition} \\
\midrule
Interpreter & $\phi(B_i) \cap B'_{\tau(i)} \neq \varnothing$ for all $i$ \\
\addlinespace
Contractive (at $i$) & $\phi(B_i) \subsetneq B'_{\tau(i)}$ \\
Indifferent (at $i$) & $\phi(B_i) = B'_{\tau(i)}$ \\
Expansive (at $i$) & $B'_{\tau(i)} \subsetneq \phi(B_i)$ \\
\addlinespace
Interpreter with inclusion & $\phi(B_i) \subseteq B'_{\tau(i)}$ for all $i$ (contractive or indifferent everywhere) \\
Exact interpreter & $\phi(B_i) = B'_{\tau(i)}$ for all $i$ (indifferent everywhere) \\
\bottomrule
\end{tabular}
\caption{Interpretation: base notion and three types per ball; inclusion and exact as global conditions.}
\label{tab:semantics}
\end{table}

\paragraph{Key equivalence (semantics).}
We use the following terms consistently: \emph{interpretation with inclusion} means $\phi(B_i) \subseteq B'_{\tau(i)}$ for all $i$; \emph{exact interpreter} means $\phi(B_i) = B'_{\tau(i)}$ for all $i$. When the state cylinders are the fibers of a depth-$k$ quotient map $\pi^{(k)}$ (as in Section~\ref{sec:encoding} and Section~\ref{sec:composite_alphabets}), the geometric and algebraic conditions coincide: for all $x$,
\[
\phi(B_x) \subseteq B_{F(x)} \quad \Longleftrightarrow \quad \pi^{(k)} \circ \phi = F \circ \pi^{(k)} \quad \text{(see Lemma~\ref{lem:cylinders_as_fibers}).}
\]
This equivalence makes interpretation with inclusion checkable via the reduction diagram rather than by inspecting ball images directly.

\subsection{Obstructions to Interpretation}
The assumption of disjointness for the source balls is non-trivial. If balls overlap, the transition rule must satisfy severe compatibility constraints.
\begin{observation}
If $B_1 \subset B_2$, then for any function $\phi$, we must have $\phi(B_1) \subseteq \phi(B_2)$. Consequently, no interpreter can exist if the prescribed target balls $B_{\tau(1)}'$ and $B_{\tau(2)}'$ do not satisfy the same inclusion property.
\end{observation}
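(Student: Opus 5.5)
The observation has two parts, and I will prove them separately. The first part is a purely set-theoretic fact. The second part is the contrapositive consequence: if an interpreter exists, the targets must be compatible.

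\emph{Step 1 (monotonicity).} For any map $\phi$ and any sets $B_1\subset B_2$, each $w\in\phi(B_1)$ has the form $w=\phi(z)$ with $z\in B_1\subseteq B_2$. Hence $w\in\phi(B_2)$. No analyticity or ultrametric structure is needed.

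\emph{Step 2 (the obstruction).} I would formalize ``satisfy the same inclusion property'' in the strongest form that is actually provable. I read the conclusion as saying that the interpretation constraints at balls $1$ and $2$ cannot be met simultaneously when the targets are incompatibly placed. The case I can prove cleanly is inclusion with containment at ball $2$, i.e.\ $\phi(B_2)\subseteq B'_{\tau(2)}$, the situation of Remark~\ref{rem:strong_exact}.

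\emph{Step 3 (the inclusion case).} Suppose $\phi$ interprets $\tau$ and is contractive or indifferent at ball $2$. Interpretation at ball $1$ gives a point $w\in\phi(B_1)\cap B'_{\tau(1)}$. By Step 1, $w\in\phi(B_2)\subseteq B'_{\tau(2)}$, so $B'_{\tau(1)}\cap B'_{\tau(2)}\neq\varnothing$.
\begin{itemize}
\item By Lemma~\ref{lem:nesting}, one of these two target balls contains the other.
\item If we also assume inclusion at ball $1$, then $\phi(B_1)\subseteq B'_{\tau(1)}\cap B'_{\tau(2)}$, which equals the smaller of the two target balls.
\item If the interpretation at ball $1$ is exact, this forces $B'_{\tau(1)}\subseteq B'_{\tau(2)}$, mirroring $B_1\subset B_2$.
\end{itemize}
So disjoint targets, or targets nested the wrong way under exactness, rule out any interpreter with inclusion.

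\textbf{Main obstacle.} For the bare interpreter notion (nonempty intersection only), the obstruction weakens considerably. If ball $2$ is expansive, $\phi(B_2)$ may contain both targets, so no constraint on $B'_{\tau(1)}, B'_{\tau(2)}$ follows. I would therefore state the observation under inclusion or exactness at ball $2$ and note that case explicitly; this is the only delicate point in the argument.
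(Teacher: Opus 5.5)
Your proposal is correct. Step~1 is exactly the argument the paper relies on: the observation has no separate proof, and its ``Consequently'' clause is simply asserted from the monotonicity of images.

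Your case analysis of that clause goes beyond the paper, and the hypotheses you add are necessary rather than a weakness. Under the bare notion of Definition~\ref{def:interpreter}, take the identity map with $B_1=B(0,p^{-1})\subset B_2=B(0,1)$. Its images meet the disjoint targets $B(0,p^{-2})$ and $B(1,p^{-2})$, so the statement fails as literally written.

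Exactness at ball~1 is also needed for the direction of the nesting. With inclusion at both balls, $\phi(z)=pz$ and targets $B'_{\tau(1)}=B(0,1)\supsetneq B'_{\tau(2)}=B(0,p^{-1})$ form a valid example. So inclusion alone forces only that the targets meet, hence nest, and not $B'_{\tau(1)}\subseteq B'_{\tau(2)}$.

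Your two-tier conclusion is therefore the right formalization: inclusion at ball~2 gives nested targets, and adding exactness at ball~1 gives $B'_{\tau(1)}\subseteq B'_{\tau(2)}$. You could replace ``may contain both targets'' with the explicit identity-map example above.
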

This highlights why the cylinder partition $\{B_x\}$ in $\mathbb{Z}_p$ is the domain we use to interpret finite discrete dynamics: at a fixed depth $n$ (radius $p^{-n}$), the partition contains $p^n$ disjoint balls, so any interpreted system at that depth must satisfy $|X| \le p^n$; conversely, any finite $|X| = N$ can be realized by choosing depth $n$ with $p^n \ge N$. The Witt and CRT development of these semantics (congruence relations, alphabets of size $p^k$, compatibility across resolution levels) is given in Section~\ref{sec:composite_alphabets}.
 \section{Existence of Rational Interpreters}
\label{sec:existence}

For the existence results in this section we work in $K = \mathbb{C}_p$ (algebraically closed, divisible value group); the arithmetic extensions in Section~\ref{sec:composite_alphabets} use the unramified base field $K_0$ with $\mathcal{O}_{K_0} \cong W(\kappa)$ (see Remark~\ref{rem:field_base}).

\subsection{Piecewise Affine Models}
The simplest construction of an interpreter is to define it ball-by-ball using affine maps. The following existence result is a reformulation, in our ball-system notation, of the local building block underlying the $\varepsilon$-gluing construction in P\'erez-Buend\'ia and Nopal-Coello~\cite{PBNC2025}. Theorem~4.2 there proves existence of a global rational $\phi$ that $\varepsilon$-approximates prescribed local analytic/rational data on finitely many disjoint balls, under the hypothesis that the image of the union of balls under each local map $f_i$ is contained in $B_1(0)$. In our application we choose local models $f_i$ (e.g.\ affine or locally dominant maps) that already encode the intended ball-to-ball behavior; then gluing yields a global $\phi$ with the required accuracy. We give an explicit statement and short proof in our setting for completeness. We first establish a standard fact about analytic functions on disconnected affinoid domains.

\begin{lemma}[Algebra of Functions on Disjoint Affinoids]
\label{lem:disjoint_affinoids}
Let $U = \bigsqcup_{i=1}^N U_i$ be a finite disjoint union of affinoid domains in $\PP^1(K)$. Then the algebra of analytic functions on $U$ decomposes as a product:
\[
\cO(U) \cong \prod_{i=1}^N \cO(U_i).
\]
In particular, giving analytic functions $f_i \in \cO(U_i)$ for each $i$ uniquely determines an analytic function $f \in \cO(U)$ such that $f|_{U_i} = f_i$.
\end{lemma}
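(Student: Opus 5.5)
The plan is to prove the decomposition at the level of the rigid-analytic structure sheaf. The key input is that the $U_i$ form a finite admissible covering of $U$ by pairwise disjoint affinoid subdomains. Once this is in place, the sheaf axiom for $\cO$ does all the work, because there are no overlap conditions to check.

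First, I will check that $U=\bigsqcup U_i$ is itself an affinoid domain and that each $U_i$ is an affinoid subdomain of $U$.

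For affinoids in $\PP^1(K)$ given as complements of finitely many open discs (closed balls are the case of interest here), disjointness implies positive separation. Each $U_i$ is compact with respect to the metric, so there is $\delta>0$ with $|z-w|\ge\delta$ for all $z\in U_i$, $w\in U_j$, $i\neq j$. This lets me write each $U_i$ as a rational (Laurent) subdomain of $U$, cut out by inequalities of the form $|z-a_i|\le r_i$ or $|z-a_j|\ge$ something.

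More abstractly, I can argue as follows. The finite union of affinoid subdomains of a common affinoid is admissible by the Gerritzen–Grauert / Tate acyclicity framework~\cite{BGR}. Moreover, a finite disjoint union of affinoid spaces is affinoid, with coordinate ring $\prod_i \cO(U_i)$: the product of affinoid algebras is affinoid, and its maximal spectrum is the disjoint union of the spectra.

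Next, I apply the sheaf property of $\cO$ to the covering $\{U_i\}$ of $U$. The restriction map
\[
\cO(U)\longrightarrow \prod_{i=1}^N \cO(U_i),\qquad f\mapsto (f|_{U_i})_i,
\]
is injective, and its image consists of the tuples that agree on the overlaps $U_i\cap U_j$. Since these overlaps are empty for $i\ne j$, and $\cO(\varnothing)=0$, every tuple is compatible. So the map is surjective, and it is visibly a $K$-algebra homomorphism. This gives the isomorphism. The uniqueness and existence of $f$ with $f|_{U_i}=f_i$ is exactly this bijectivity.

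As a concrete alternative in the case of closed balls $U_i=\Ball{a_i}{r_i}$, I would exhibit the idempotents $e_i\in\cO(U)$, equal to $1$ on $U_i$ and $0$ elsewhere. These are constant on each ball, hence given by convergent power series there. The decomposition is then $f=\sum_i e_i f$, and analyticity on $U$ means precisely that $f$ is given by a convergent power series on each ball.

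The main obstacle is the admissibility/affinoid step. A purely set-theoretic disjoint union is not automatically admissible in a G-topology, so I must justify that $\{U_i\}$ is an admissible covering. I will try the separation argument first: positive distance $\delta$ makes each $U_i$ a rational subdomain of $U$, and finite coverings by affinoid subdomains are admissible by Tate's acyclicity theorem. If that gets messy, I will fall back on the product-of-affinoid-algebras description, citing~\cite{BGR,FvdP}.
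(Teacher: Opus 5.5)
Your proposal is correct and matches the paper's argument: apply the sheaf property of $\cO$ to the covering $\{U_i\}$ of $U$, where the gluing conditions are vacuous because $U_i\cap U_j=\varnothing$ for $i\neq j$. Your observation that finite coverings by affinoid subdomains are automatically admissible supplies the justification the paper only asserts. One small correction: closed balls in $\Cp$ are not metrically compact (the residue field $\overline{\F}_p$ is infinite), so get the positive separation directly from the ultrametric inequality instead: if $\Ball{a}{r}\cap\Ball{b}{s}=\varnothing$ then $|a-b|>\max(r,s)$, and $|z-w|=|a-b|$ for all $z\in\Ball{a}{r}$, $w\in\Ball{b}{s}$.
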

\begin{proof}
This is a standard result in rigid analytic geometry, used throughout $p$-adic dynamics (see~\cite{BenedettoDynamics}; for the rigid setting see~\cite{FvdP} or~\cite{BGR}).\footnote{The Archimedean analogue is that holomorphic functions on a disjoint union of disks are determined by their restrictions.} Since the affinoids $U_i$ are pairwise disjoint, there exist disjoint admissible opens $V_i$ containing $U_i$ such that $U = \bigcup V_i$. The sheaf property of $\cO$ on admissible covers implies that $\cO(U) = \prod_i \cO(U_i)$.
\end{proof}

\begin{theorem}[Analytic Existence]
\label{thm:analytic_existence}
Let $(\mathcal{B}, \tau, \mathcal{B}')$ be a ball system, where $K$ is a complete algebraically closed non-Archimedean field with divisible value group (e.g., $K = \Cp$). Assume that all radii $r_i$ and $t_i$ belong to the value group $|K^\times|$. Then there exists an analytic function $\psi$ defined on the affinoid union $U = \bigcup B_i$ that exactly interprets $\tau$. In fact, $\psi$ can be chosen piecewise affine:
\[
\psi|_{B_i}(z) = b_{\tau(i)} + u_i(z - a_i), \quad \text{where } |u_i| = \frac{t_{\tau(i)}}{r_i}.
\]
\end{theorem}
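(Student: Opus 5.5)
The plan is to build $\psi$ ball by ball, glue the pieces with Lemma~\ref{lem:disjoint_affinoids}, and then check the image of each ball with Lemma~\ref{lem:affine_ball}.

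\textbf{Step 1: choose the slopes.} For each $i$ we need some $u_i \in K^\times$ with $|u_i| = t_{\tau(i)}/r_i$. By hypothesis $r_i$ and $t_{\tau(i)}$ lie in the value group $|K^\times|$, and a value group is a multiplicative group. So the quotient $t_{\tau(i)}/r_i$ also lies in $|K^\times|$, and it is attained by some $u_i$. For $K=\Cp$ the value group is $p^{\mathbb{Q}}$, so we can be explicit: if $t_{\tau(i)}/r_i = p^{-m/n}$, take $u_i$ to be an $n$-th root of $p^m$. Divisibility of the value group and algebraic closedness are only needed to guarantee that the radii hypothesis is not vacuous for rational exponents. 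The needed value is automatically nonzero, so $u_i \neq 0$.

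\textbf{Step 2: glue.} Define $f_i(z) = b_{\tau(i)} + u_i(z-a_i)$. This is a polynomial, hence an element of $\cO(B_i)$. The balls $B_i$ are pairwise disjoint closed balls, that is, disjoint affinoid domains. Lemma~\ref{lem:disjoint_affinoids} then gives a unique $\psi \in \cO(U)$ with $\psi|_{B_i} = f_i$. Since $\psi$ is polynomial on each piece, it has no poles on $U$. To meet the requirement of Definition~\ref{def:interpreter} that $\psi$ be analytic on a neighborhood of $U$, the same affine formulas can be extended to slightly larger disjoint balls around the $B_i$.

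Such balls exist because of the ultrametric structure. By Lemma~\ref{lem:nesting}, two disjoint closed balls satisfy $|a_i - a_j| > \max(r_i, r_j)$. Hence each $B_i$ can be enlarged to a radius strictly between $r_i$ and $\min_{j\neq i}|a_i-a_j|$ while the enlarged balls stay pairwise disjoint.

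\textbf{Step 3: identify the images.} Lemma~\ref{lem:affine_ball} gives
\[
\psi(B_i) = \Ball{b_{\tau(i)}}{|u_i|\,r_i} = \Ball{b_{\tau(i)}}{t_{\tau(i)}} = B'_{\tau(i)}
\]
for every $i$. So every interpretation is indifferent, and $\psi$ exactly interprets $\tau$ in the sense of Remark~\ref{rem:strong_exact}.

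The only step needing any care is Step 1, namely making sure the prescribed ratio of radii is actually realized as an absolute value. That is exactly what the hypothesis on the radii guarantees. Everything else is a direct application of the earlier lemmas.
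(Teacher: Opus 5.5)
Your proof is correct and follows the paper's argument: choose $u_i$ with $|u_i| = t_{\tau(i)}/r_i$, glue the affine pieces via Lemma~\ref{lem:disjoint_affinoids}, and read off $\psi(B_i) = B'_{\tau(i)}$ from Lemma~\ref{lem:affine_ball}. Your two additions are small refinements that the paper's proof omits. First, you observe that Step~1 needs only that $|K^\times|$ is a group, not algebraic closedness or divisibility. Second, you enlarge the $B_i$ to pairwise disjoint neighborhoods, which is what the ``analytic in a neighborhood of $U$'' clause of Definition~\ref{def:interpreter} requires.
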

\begin{proof}
Since $K$ is algebraically closed and its value group is divisible, for any $i$ we can find $u_i \in K$ such that $|u_i| = t_{\tau(i)}/r_i$. Define $\psi_i(z) = b_{\tau(i)} + u_i(z - a_i)$ on each ball $B_i$. By Lemma~\ref{lem:affine_ball}, we have $\psi_i(B_i) = B_{\tau(i)}'$. Since the balls $B_i$ are pairwise disjoint, Lemma~\ref{lem:disjoint_affinoids} ensures that the family $\{\psi_i\}$ uniquely determines an analytic function $\psi \in \cO(U)$ such that $\psi|_{B_i} = \psi_i$ for all $i$. This $\psi$ exactly interprets $\tau$.
\end{proof}

\begin{remark}[Radii in Unramified Extensions]
\label{rem:radii_unramified}
In unramified extensions $K_0/\mathbb{Q}_p$ (where the value group is $p^{\mathbb{Z}}$), the hypothesis that all radii belong to $|K_0^\times|$ restricts them to powers of $p$. This is consistent with our setting, as the state cylinders in Section~\ref{sec:encoding} are constructed with radii $p^{-n}$ for $n \in \mathbb{N}$. The algebraically closed case $K = \mathbb{C}_p$ (with value group $p^{\mathbb{Q}}$) allows more flexibility, but the construction remains valid as long as the radii are in the value group.
\end{remark}

\subsection{Global Rational Approximants (Runge)}
We work in $K = \mathbb{C}_p$ (algebraically closed) as in Remark~\ref{rem:field_base}. The piecewise affine interpreter $\psi$ constructed in Theorem~\ref{thm:analytic_existence} is analytic on the affinoid domain $U = \bigcup B_i$, but it is not globally defined on $\PP^1(K)$. To obtain a global interpreter (for instance, a rational function on $\PP^1$), we need to approximate $\psi$ by rational functions. The key tool is the \emph{Runge approximation theorem} in rigid analytic geometry, which states that rational functions are dense in the algebra of analytic functions on affinoid domains. For our setting, where $U$ is a finite union of pairwise disjoint closed balls (a Weierstrass domain), this density holds under the supremum norm topology. This allows us to approximate the piecewise affine $\psi$ arbitrarily closely by rational functions, while preserving the ball-dynamics semantics through the robust exactness results of Section~\ref{sec:robust}.

We use the following standard density result. A preliminary fact about polynomial interpolation at centers (used when discussing degree bounds) is recorded next.

\begin{proposition}[Polynomial interpolation at centers]
\label{prop:lagrange_centers}
Let $a_1,\dots,a_N\in K$ be pairwise distinct points and let $y_1,\dots,y_N\in K$.
Then there exists a unique polynomial $P\in K[z]$ with $\deg(P)\le N-1$ such that
$P(a_i)=y_i$ for all $i$.
\end{proposition}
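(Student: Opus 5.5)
The plan is the classical Lagrange argument, run in two steps: existence by an explicit basis, and uniqueness by counting roots. Both steps work over any field, so neither the non-Archimedean structure of $K$ nor algebraic closedness is needed.

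\emph{Existence.} For each $i$, I would define the Lagrange basis polynomial
\[
L_i(z)=\prod_{j\neq i}\frac{z-a_j}{a_i-a_j}.
\]
It is well defined because the $a_i$ are pairwise distinct, so each denominator $a_i-a_j$ is nonzero. It is a product of $N-1$ linear factors, hence has degree exactly $N-1$. It satisfies $L_i(a_k)=\delta_{ik}$:
\begin{itemize}
\item for $k\ne i$, the factor $z-a_k$ vanishes at $z=a_k$;
\item for $k=i$, every factor equals $1$.
\end{itemize}
Setting $P=\sum_{i=1}^N y_iL_i$ then gives $P(a_k)=y_k$ for all $k$, and $\deg P\le N-1$.

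\emph{Uniqueness.} Suppose $P$ and $Q$ both have degree at most $N-1$ and both interpolate the data. Then $D=P-Q$ has degree at most $N-1$ and vanishes at the $N$ distinct points $a_1,\dots,a_N$. A nonzero polynomial over the field $K$ has at most as many roots as its degree. This follows by repeatedly factoring out $z-a_i$ via the division algorithm, using that $K[z]$ is an integral domain. Hence $D=0$.

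Equivalently, one can phrase both steps at once through linear algebra. The map $K[z]_{\le N-1}\to K^N$, $P\mapsto(P(a_i))_i$, is linear between spaces of the same dimension $N$. It is injective by the root count, hence bijective. The Vandermonde determinant $\prod_{i<j}(a_j-a_i)\neq0$ gives the same conclusion.

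There is no genuine obstacle. The only point needing care is the role of distinctness of the centers: it is used both to define the $L_i$ and in the root count. In our setting distinctness is automatic, since the balls $B_i$ are pairwise disjoint.
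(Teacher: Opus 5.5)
Your proposal is correct and follows the paper's proof, which writes down the same Lagrange formula $P(z)=\sum_{i} y_i\prod_{j\ne i}\frac{z-a_j}{a_i-a_j}$ and cites the classical theorem. Your root-counting argument for uniqueness supplies the step that the paper leaves implicit.
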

\begin{proof}
This is the classical Lagrange interpolation theorem. Explicitly,
\[
P(z)=\sum_{i=1}^N y_i \prod_{j\ne i}\frac{z-a_j}{a_i-a_j}.
\]
\end{proof}

\begin{remark}[What this does \emph{not} imply]
\label{rem:degree_bound_limits}
Proposition~\ref{prop:lagrange_centers} controls the discrete update on the chosen representatives (centers),
but it does \emph{not} by itself ensure any ball-mapping property such as
$P(B_i)\subseteq B'_{\tau(i)}$ (or equality), nor does it provide uniform approximation
$\|P-\psi\|_U<\varepsilon$ for arbitrarily small $\varepsilon$ with degree bounded by $N-1$.
Uniform approximation on affinoids is obtained via Runge-type density (Theorem~\ref{thm:runge}),
typically without a degree bound independent of $\varepsilon$.
\end{remark}

\begin{theorem}[Rigid Runge Approximation]
\label{thm:runge}
Let $U \subset \PP^1(K)$ be a finite union of pairwise disjoint closed balls. Then the rational functions $K(z)$ are dense in $\cO(U)$ under the supremum norm (see~\cite{FvdP} or~\cite{BGR}). That is, for any $\psi \in \cO(U)$ and $\varepsilon > 0$, there exists $\phi \in K(z)$ such that $\sup_{z \in U} |\phi(z) - \psi(z)| < \varepsilon$.
\end{theorem}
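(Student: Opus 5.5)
The natural route is to reduce the statement to the classical description of rational functions that are analytic on a finite union of disjoint closed balls. Then approximate each local power series by partial-fraction pieces.

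\textbf{Step 1: partial-fraction approximants.} Write $U=\bigsqcup_{i=1}^N B(a_i,r_i)$. Taking a Möbius change of coordinates if needed, assume $\infty\notin U$. The complement $\PP^1(K)\setminus U$ is then a connected "holed" region. By Lemma~\ref{lem:disjoint_affinoids}, $\cO(U)\cong\prod_i \cO(B_i)$. So it suffices to approximate, for each fixed $i$, the function that equals a given power series $f_i=\sum_k c_k (z-a_i)^k$ on $B_i$ and $0$ on every $B_j$ with $j\neq i$. Adding these $N$ approximants then approximates an arbitrary $\psi$ in the sup norm, by the ultrametric inequality.

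\textbf{Step 2: local truncation.} On $B_i$ the series converges on the closed ball, so $|c_k|r_i^k\to 0$. A truncation $T_i=\sum_{k\le M} c_k(z-a_i)^k$ is therefore within $\varepsilon$ of $f_i$ on $B_i$. The difficulty is that $T_i$ is not small on the other balls $B_j$.

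\textbf{Step 3: the idempotent, which is the main obstacle.} We need a rational function $e_i$ that is close to $1$ on $B_i$ and close to $0$ on each $B_j$, $j\ne i$, while staying bounded enough that $T_i e_i$ remains controlled. My first attempt is
\[
e_i(z)=\frac{1}{1+g_i(z)^m}, \qquad g_i(z)=\lambda\prod_{j\neq i}\frac{z-a_i}{z-a_j}.
\]
Here $\lambda\in K$ is chosen so that $|g_i|<1$ on $B_i$ and $|g_i|>1$ on each $B_j$, $j\ne i$. This is possible because, on $B_j$, the factor $|z-a_j|\le r_j$ is small while $|z-a_i|=|a_i-a_j|$ is large by disjointness.

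If a single scalar $\lambda$ cannot separate all the balls, I would fall back on two alternatives. One is to build $e_i$ as a product $\prod_{j\ne i}\frac{1}{1+h_{ij}^m}$ of pairwise separators, where $h_{ij}=\mu_{ij}\frac{z-a_i}{z-a_j}$ is less than $1$ in modulus on $B_i$ and greater than $1$ on $B_j$. The other is to use the standard Mittag-Leffler decomposition from~\cite{FvdP}: every $f\in\cO(U)$ equals $f_\infty+\sum_j f_j$, where $f_j$ is a convergent series in $(z-a_j)^{-1}$ vanishing at $\infty$ and analytic off $B_j$, and $f_\infty$ is a polynomial-type part. Truncating each of these series gives rational functions directly.

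\textbf{Step 4: estimates.} For the idempotent route, on $B_i$ we get $|e_i-1|\le \max|g_i|^m$, and on $B_j$ we get $|e_i|\le |g_i|^{-m}$. Both tend to $0$ geometrically as $m\to\infty$, while $\|T_i\|_U$ is a fixed finite number: $T_i$ is a polynomial and $U$ is bounded. Choosing $m$ large therefore makes $\|T_ie_i-\tilde f_i\|_U<\varepsilon$.

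The poles of $e_i$ lie at the solutions of $g_i^m=-1$, which satisfy $|g_i|=1$. None of them lies in $U$, because $|g_i|\neq 1$ at every point of $U$. Summing the $T_ie_i$ over $i$ yields $\phi\in K(z)$, pole-free on $U$, with $\sup_U|\phi-\psi|<\varepsilon$.
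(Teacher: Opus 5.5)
Your argument is correct, and it takes a genuinely different route from the paper. There is one step you leave unjustified: the existence of a single $\lambda$. Your heuristic ignores the cross factors $|z-a_i|/|z-a_k|=d_{ij}/d_{jk}$ ($k\ne i,j$) on $B_j$, and these may be small. The step nevertheless holds. Put $d_{jk}=|a_j-a_k|$, so that $d_{jk}>\max(r_j,r_k)$ by disjointness. Then $\sup_{B_i}|g_i|\le|\lambda|\,r_i^{N-1}/\prod_{k\ne i}d_{ik}$ and $\inf_{B_j}|g_i|\ge|\lambda|\,d_{ij}^{N-1}/\bigl(r_j\prod_{k\ne i,j}d_{jk}\bigr)$, so you need
\[
\max_{j\ne i}\frac{r_j\prod_{k\ne i,j}d_{jk}}{d_{ij}^{N-1}}<|\lambda|<\frac{\prod_{k\ne i}d_{ik}}{r_i^{N-1}} .
\]
The quotient of the $j$-th lower bound by the upper bound is $\frac{r_ir_j}{d_{ij}^2}\prod_{k\ne i,j}\frac{r_i\,d_{jk}}{d_{ij}\,d_{ik}}$. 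Both $r_ir_j/d_{ij}^2$ and every factor of the product are $<1$, because $d_{jk}\le\max(d_{ij},d_{ik})$ while $r_i<\min(d_{ij},d_{ik})$. So the interval is nonempty, and since $|\Cp^\times|$ is dense, $\lambda$ exists. Then $|g_i|\ne 1$ on all of $U$, and your Step~4 goes through verbatim (note $e_i(a_j)=0$ even though $g_i(a_j)=\infty$).

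Your pairwise fallback, by contrast, needs care you did not mention. On a third ball $B_k$ one has $|h_{ij}|\equiv|\mu_{ij}|\,d_{ik}/d_{jk}$, which equals $1$ for an unlucky choice (e.g.\ $\mu_{ij}=1$ and $d_{ik}=d_{jk}$). Then $1+h_{ij}^m$ may vanish inside $U$. You would have to take $|\mu_{ij}|\in(r_j/d_{ij},\,d_{ij}/r_i)$ avoiding the finitely many values $d_{jk}/d_{ik}$.

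The paper does not prove Theorem~\ref{thm:runge}; it quotes the rigid-analytic Runge theorem, which the cited sources obtain from the Mittag-Leffler decomposition of $\cO(U)$ (your second fallback) or from the theory of Runge pairs. That general machinery also yields Corollary~\ref{cor:pole_free_with_control}(2), with poles in a prescribed admissible open $V$. Your construction only gives poles off $U$, on the uncontrolled level sets $|g_i|=1$.

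What your route buys is a self-contained, explicit approximant $\phi=\sum_iT_ie_i$, of the same shape as the $\varepsilon$-gluing formula $\sum_i f_ih_i$ recalled in Remark~\ref{rem:alternative_methods}, together with effective control:
\begin{itemize}
\item $\deg(T_ie_i)\le\deg T_i+m(N-1)$, hence $\deg\phi\le\sum_i\deg T_i+mN(N-1)$.
\item Taking $|\lambda|$ at the geometric midpoint of the interval above gives $\sup_{B_i}|g_i|\le\rho$ and $\sup_{B_j}|g_i|^{-1}\le\rho$, where $\rho:=\max_{i\ne j}\sqrt{r_ir_j}/|a_i-a_j|<1$.
\item Consequently any $m>\log\bigl(\max_i\|T_i\|_U/\varepsilon\bigr)/\log(1/\rho)$ works.
\end{itemize}
For the piecewise affine $\psi$ of Theorem~\ref{thm:analytic_existence} no truncation is needed ($\deg T_i=1$). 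This gives a degree bound of order $N^2\log(1/\varepsilon)$, with constants depending on the ball configuration and on $\psi$. That is more than the paper claims its Runge step provides (Remark~\ref{rem:degree_bound_limits}). It is still not a bound in terms of $N$ and $\varepsilon$ alone, as asked in the open problem of Section~\ref{sec:closing_the_loop}.
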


\begin{lemma}[Runge approximation implies finite-point control]
\label{lem:runge_finite_constraints}
Let $U=\bigsqcup_{i=1}^N B_i\subset \PP^1(K)$ be a finite union of pairwise disjoint closed balls,
let $S=\{s_1,\dots,s_k\}\subset U$ be finite, and let $\psi\in\cO(U)$.
Given $\varepsilon>0$, there exists $\phi\in K(z)$ such that
$\norm{\phi-\psi}_U<\varepsilon$ and hence $|\phi(s_j)-\psi(s_j)|\le \varepsilon$ for all $j$.
\end{lemma}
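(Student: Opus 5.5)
This is a direct consequence of Theorem~\ref{thm:runge}, and the plan is to apply that theorem once and then restrict from $U$ to the finite set $S$. The steps are as follows.

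First, since $U=\bigsqcup_{i=1}^N B_i$ is a finite union of pairwise disjoint closed balls in $\PP^1(K)$, the hypotheses of Theorem~\ref{thm:runge} hold for $U$ and the given $\psi\in\cO(U)$. For the prescribed $\varepsilon>0$ it produces $\phi\in K(z)$ with
\[
\norm{\phi-\psi}_U=\sup_{z\in U}|\phi(z)-\psi(z)|<\varepsilon .
\]
Part of this density statement is that $\phi$ is analytic on $U$, so that the difference makes sense in $\cO(U)$. In particular $\phi$ has no poles on $U$, which is the regularity that Definition~\ref{def:interpreter} needs later. If one wants this made explicit, one can note that the approximants in the rigid Runge theorem are taken with poles in the complement $\PP^1(K)\setminus U$.

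Second, each $s_j$ lies in $U$, and the supremum over $U$ bounds every point value. Hence
\[
|\phi(s_j)-\psi(s_j)|\le \norm{\phi-\psi}_U<\varepsilon\le\varepsilon
\]
for every $j=1,\dots,k$. This gives the stated conclusion, which in fact holds with strict inequality.

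There is essentially no obstacle here; the only point needing care is the one already covered in the first step, namely that the Runge approximant is pole-free on $U$. The finiteness of $S$ is not used. If a stronger version is wanted later, with exact interpolation $\phi(s_j)=\psi(s_j)$, the natural refinement is to correct $\phi$ by adding $\sum_j \delta_j L_j$. Here $L_j$ is the Lagrange basis from Proposition~\ref{prop:lagrange_centers} on the nodes $s_j$, and $\delta_j=\psi(s_j)-\phi(s_j)$. Since $\max_j|\delta_j|<\varepsilon$ and $\norm{L_j}_U$ is a fixed finite constant independent of $\varepsilon$, the correction has norm $O(\varepsilon)$. Choosing $\varepsilon$ smaller at the outset therefore preserves the uniform bound while also making the values exact.
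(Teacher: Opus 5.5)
Your proposal is correct and matches the paper's proof: apply Theorem~\ref{thm:runge} to get $\phi\in K(z)$ with $\norm{\phi-\psi}_U<\varepsilon$, then bound each value $|\phi(s_j)-\psi(s_j)|$ by the supremum over $U\supseteq S$. The Lagrange-correction refinement is not needed for this statement. It is valid as you describe it only when the balls are bounded subsets of $K$ (as in the paper's setting $B(a_i,r_i)\subset K$), since otherwise the polynomials $L_j$ need not be bounded on $U$.
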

\begin{proof}
By Theorem~\ref{thm:runge} there exists $\phi\in K(z)$ with $\norm{\phi-\psi}_U<\varepsilon$.
Then for each $s_j\in S\subset U$ we have $|\phi(s_j)-\psi(s_j)|\le \norm{\phi-\psi}_U<\varepsilon$.
\end{proof}

\subsection{Existence for Interpretation with Inclusion}
The density of rational functions immediately yields the existence of interpreters with inclusion. The construction is related to the problem of $p$-adic interpolation of continuous functions, but with the added requirement of dynamic compatibility on balls.

\begin{corollary}[Existence of Rational Interpreters with Inclusion]
\label{cor:existence_strong}
For any ball system $(\mathcal{B}, \tau, \mathcal{B}')$, there exists a rational function $\phi \in K(z)$ that interprets $\tau$ with inclusion.
\end{corollary}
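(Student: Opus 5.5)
The plan is to approximate a piecewise affine model by a rational function and absorb the error using the ultrametric structure. A subtlety: Theorem~\ref{thm:analytic_existence} needs radii in the value group, and the corollary is stated for an arbitrary ball system. So I will not take $\psi$ to be the exact interpreter. Instead I take a \emph{contractive} piecewise affine model, which sidesteps both the radius issue and the need for exact image control.

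\textbf{Step 1 (contractive model).} For each $i$ choose $u_i\in K^\times$ with $|u_i|\,r_i<t_{\tau(i)}$. This is possible because $|K^\times|=p^{\mathbb{Q}}$ is dense in $\mathbb{R}_{>0}$; alternatively take $u_i$ of small enough absolute value. Set $\psi_i(z)=b_{\tau(i)}+u_i(z-a_i)$ on $B_i$. By Lemma~\ref{lem:affine_ball}, $\psi_i(B_i)=\Ball{b_{\tau(i)}}{|u_i|r_i}\subseteq B'_{\tau(i)}$. By Lemma~\ref{lem:disjoint_affinoids}, the $\psi_i$ glue to a single $\psi\in\cO(U)$ with $U=\bigcup B_i$. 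One could even take $u_i=0$, making $\psi$ locally constant $\equiv b_{\tau(i)}$ on $B_i$; that is still analytic on $U$ and is the simplest choice.

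\textbf{Step 2 (Runge).} Let $\varepsilon=\min_i t_{\tau(i)}>0$. By Theorem~\ref{thm:runge}, there is $\phi\in K(z)$ with $\norm{\phi-\psi}_U<\varepsilon$. Implicit in the sup-norm approximation is that $\phi$ has no poles on $U$. To meet Definition~\ref{def:interpreter} literally, I also need $\phi$ pole-free on a neighborhood of $U$. Since $\phi$ has finitely many poles, none in the closed set $U$, they lie at positive distance from $U$, so a neighborhood is automatic.

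\textbf{Step 3 (inclusion).} For $z\in B_i$,
\[
|\phi(z)-b_{\tau(i)}|\le\max\bigl(|\phi(z)-\psi(z)|,\,|\psi(z)-b_{\tau(i)}|\bigr)\le t_{\tau(i)}
\]
by the ultrametric inequality. Hence $\phi(B_i)\subseteq B'_{\tau(i)}$. Since $B_i\neq\varnothing$, the intersection $\phi(B_i)\cap B'_{\tau(i)}$ is nonempty, so $\phi$ is an interpreter, and it acts with inclusion.

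\textbf{Main obstacle.} The real content is Step 2: the rigid Runge theorem for a finite disjoint union of closed balls, taken here as given. It must be applied in a form where the approximant is genuinely pole-free on $U$; I would note that the approximating rational functions can be chosen with poles in the complement, for instance one pole in each bounded ``hole'' between balls. The remaining care is only to avoid relying on exact ball images, which would require linear dominance and value-group radii. The contractive choice with a strict margin avoids this entirely.
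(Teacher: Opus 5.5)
Your proof is correct and is essentially the paper's argument: the paper takes exactly your ``simplest choice'' $u_i=0$, i.e.\ the piecewise constant $\psi\equiv b_{\tau(i)}$ on $B_i$ glued via Lemma~\ref{lem:disjoint_affinoids}, applies Theorem~\ref{thm:runge} with $\varepsilon<\min_i t_{\tau(i)}$, and concludes $\phi(B_i)\subseteq B'_{\tau(i)}$ by the same ultrametric inequality, so the contractive affine variant is harmless but adds nothing essential. Your extra care about pole-freeness on a neighborhood of $U$ is sound (the paper treats this separately in Corollary~\ref{cor:pole_free_with_control}), but the aside about placing one pole in each bounded ``hole'' is unnecessary: a finite disjoint union of closed balls has no such holes, and polynomials already suffice for the approximation.
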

\begin{proof}
Define $\psi \in \cO(U)$ by setting $\psi|_{B_i}(z)=b_{\tau(i)}$ for each $i$.
By Lemma~\ref{lem:disjoint_affinoids} this uniquely determines an analytic function on
$U=\bigsqcup_i B_i$.

Choose $\varepsilon < \min_i t_{\tau(i)}$. By Theorem~\ref{thm:runge}, there exists $\phi \in K(z)$ such that $|\phi(z) - \psi(z)| < \varepsilon$ for all $z \in U$. Then for $z \in B_i$:
\[
|\phi(z) - b_{\tau(i)}| \leq \max(|\phi(z) - \psi(z)|, |\psi(z) - b_{\tau(i)}|)=|\phi(z)-\psi(z)| \le \varepsilon < t_{\tau(i)}.
\]
Thus $\phi(B_i) \subseteq B_{\tau(i)}'$.
\end{proof}

\begin{corollary}[Pole-Free Rational Interpreters]
\label{cor:pole_free_with_control}
Let $K$ be a complete non-Archimedean field and let
\[
U = \bigsqcup_{i=1}^N B_i \subset \PP^1(K)
\]
be a finite disjoint union of closed balls. Let $\psi\in\cO(U)$ be the piecewise-constant analytic function defined by $\psi|_{B_i}\equiv b_{\tau(i)}$ (so $\psi$ is bounded on $U$). Then:

\begin{enumerate}
\item \textbf{(No poles on $U$).} If $\phi \in K(z)$ satisfies $\norm{\phi - \psi}_U < \infty$ (equivalently, $\norm{\phi}_U < \infty$ since $U$ is a compact affinoid), then $\phi$ has no poles on $U$, i.e., $\phi \in \cO(U)$.

\item \textbf{(Prescribing where poles may lie).} Let $V \subset \PP^1(K)$ be an admissible open such that $V \cap U = \varnothing$ and $V$ meets every connected component of $\PP^1(K) \setminus U$. Then for every $\varepsilon > 0$ there exists $\phi \in K(z)$, with all poles contained in $V$, such that $\norm{\phi - \psi}_U < \varepsilon$. In particular, the rational interpreter can be chosen analytic on $U$ and with poles contained in $V$.
\end{enumerate}
\end{corollary}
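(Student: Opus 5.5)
Part (1) reduces to pole behaviour, and part (2) reduces to a Runge theorem with prescribed poles.

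\textbf{Part (1).} Suppose $\phi\in K(z)$ had a pole $z_0\in U$, say $z_0\in B_i=\Ball{a_i}{r_i}$. Write $\phi=P/Q$ in lowest terms, so $Q(z_0)=0$ and $P(z_0)\neq 0$. Near $z_0$ we then have $|\phi(z)|\to\infty$ as $z\to z_0$ inside $B_i$; this works because $B_i$ is a closed ball with $r_i>0$, so it contains points arbitrarily close to $z_0$. Hence $\norm{\phi}_U=\infty$. Since $\psi$ is bounded by $\max_i|b_{\tau(i)}|$, the ultrametric inequality gives $\norm{\phi}_U<\infty$ if and only if $\norm{\phi-\psi}_U<\infty$, which is the stated equivalence. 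Conversely, if $\phi$ has no poles on $U$, then on each $B_i$ it expands as a power series converging on $B_i$, because its poles lie at positive distance from the compact ball. So $\phi|_{B_i}\in\cO(B_i)$, and Lemma~\ref{lem:disjoint_affinoids} gives $\phi\in\cO(U)$. A small technical point is that when $K$ is not algebraically closed the poles live in $\overline K$. I would handle this by working in $\Cp$ (or $\widehat{\overline K}$) and noting that a pole in $U\otimes\widehat{\overline K}$ still forces unbounded values.

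\textbf{Part (2).} This is the form of Runge's theorem with prescribed poles, as in~\cite{FvdP}. The complement $\PP^1(K)\setminus U$ has connected components given by the holes: the open ball $\{|z-a_i|>r_i\}$ around $\infty$ intersected with the others. Concretely, the components are the complement of the smallest ball containing everything, together with the open "gaps" between the $B_i$. Choose one point $c_j\in V$ in each component. Mittag-Leffler decomposition on affinoids says that every $f\in\cO(U)$ is a uniform limit of sums $\sum_j g_j(1/(z-c_j))$, with polynomials $g_j$, where the term at $c=\infty$ is just a polynomial in $z$.

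For our $\psi$ the argument is very concrete. Since $\psi$ is locally constant, it suffices to approximate each indicator function $\mathbf 1_{B_i}$ and take $\sum_i b_{\tau(i)}\cdot(\text{approximant})$. For an indicator, I would use an explicit construction. Pick $c\in V$ lying in the component of $\PP^1\setminus U$ that separates $B_i$ from the rest. Then $h(z)=(z-a_i)/(z-c)$, or a suitable Möbius modification, has $|h|\le\rho<1$ on $B_i$ and $|h|\ge\rho'>1$ on the other balls. Now set $e_N=1/(1+h^N)$: it tends to $1$ uniformly on $B_i$ and to $0$ on $U\setminus B_i$, and its poles lie only where $h^N=-1$. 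This is where the obstacle appears. The poles of $e_N$ lie on the "annulus" $|h|=1$, which need not be inside $V$. To fix this I would replace $e_N$ by its partial-fraction expansion and move each pole to the chosen $c_j$ in the same component using the standard pole-pushing lemma. That lemma says that on $U$, $1/(z-w)$ is a uniform limit of polynomials in $1/(z-c)$ whenever $w$ and $c$ lie in the same component of the complement; this follows from the geometric series expansion $1/(z-w)=\sum_k (w-c)^k/(z-c)^{k+1}$, valid on $U$ when $|w-c|<|z-c|$ for all $z\in U$, iterated along a finite chain of points.

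The main obstacle, then, is justifying this pole-pushing lemma inside a component. That requires the ultrametric description of components of the complement of finitely many disjoint closed balls: each component is an open ball (or the complement of a closed ball), and any two points in it are joined by finitely many steps that each satisfy the convergence condition. Once pole-pushing is established, truncating the series gives $\phi$ with poles in $V$ and $\norm{\phi-\psi}_U<\varepsilon$. Part (1) then confirms that $\phi$ is analytic on $U$, and, as in Corollary~\ref{cor:existence_strong}, $\phi$ is an interpreter.
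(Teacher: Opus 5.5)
Part (1) is the paper's argument: a pole in $U$ makes $|\phi|$ unbounded there, while $\psi$ is bounded. Your caveat about poles in $\overline K$ is a genuine refinement. Over $\Qp$ the function $1/(z^2-p)$ is bounded on $\Zp$, yet it has a pole in the closed unit disc, so the sup norm must be read over $\widehat{\overline K}$-points.

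Part (2) has a genuine gap, and it sits exactly at the pole-pushing step you flagged.

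\textbf{The components are misidentified.} Your picture of $\PP^1(K)\setminus U$ as an unbounded piece plus ``gaps'' separating the balls is Archimedean intuition. In the rigid (Berkovich) sense, the complement of finitely many disjoint closed discs is connected. So the hypothesis on $V$ amounts to little more than $V\neq\varnothing$, and no component separates $B_i$ from the other balls.

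\textbf{The pole-pushing lemma has bounded reach.} The expansion $1/(z-w)=\sum_k (w-c)^k/(z-c)^{k+1}$ converges on $U$ iff $|w-c|<\operatorname{dist}(c,U)$. In that case the ultrametric inequality gives $|z-w|=|z-c|$ on $U$, hence $\operatorname{dist}(w,U)=\operatorname{dist}(c,U)$, and the relation is transitive. Chains of such steps therefore never leave the open disc $\{|w-c|<\operatorname{dist}(c,U)\}$. For $c=\infty$ the corresponding region is $\{|w|>\sup_U|z|\}$. These discs are not the components the hypothesis refers to.

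\textbf{A concrete failure.} In $\Cp$ take $U=\Ball{0}{|p|}\cup\Ball{1}{|p|}$ and $V=\{|z|>1\}\cup\{\infty\}$, which satisfies the hypothesis.
\begin{itemize}
\item Every region reachable from a point of $V$ by your steps stays inside $V$.
\item The poles of $1/(1+h^N)$, for instance with $h=z/\lambda$ and $|p|<|\lambda|<1$, lie on $|w|=|\lambda|<1$.
\item Hence no pole ever reaches $V$.
\end{itemize}
Yet polynomials are dense on this $U$. The polynomial $e=1-z$ satisfies $\norm{e-\mathbf 1_{\Ball{0}{|p|}}}_U\le |p|$, and the iteration $e\mapsto 3e^2-2e^3$ squares this error at each step.

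\textbf{The missing idea.} Pick $c\in V$ and set $w=1/(z-c)$. The image of $U$ is again a finite disjoint union of closed discs. This is a Weierstrass domain, on which polynomials in $w$ are dense: polynomial approximate idempotents with sup-error $<1$ exist by induction on the number of discs, and you sharpen them as above. This produces $\phi$ whose only pole is at $c$.

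The paper constructs nothing explicit at this step. It simply invokes the rigid Runge theorem for the pair $(U,V)$, which is where your opening citation should have stopped.
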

\begin{proof}
(1) If $\phi$ had a pole at $x \in U$, then $|\phi|$ would be unbounded on every neighborhood of $x$, hence $\norm{\phi - \psi}_U = \infty$ because $\psi$ is bounded on $U$. Thus $\phi$ has no poles on $U$, i.e., $\phi \in \cO(U)$.

(2) This is a standard Runge approximation statement on the rigid-analytic projective line: under the hypothesis that $V$ meets every connected component of the complement, the subalgebra of rational functions with poles in $V$ is dense in $\cO(U)$ for the sup-norm on $U$. Apply this to $\psi \in \cO(U)$ to obtain $\phi$ with poles in $V$ and $\norm{\phi - \psi}_U < \varepsilon$.
\end{proof}

\begin{corollary}[Pole-Free Interpretation for Ball Systems]
\label{cor:pole_free}
For any ball system $(\mathcal{B}, \tau, \mathcal{B}')$ with $U = \bigcup B_i$, there exists a rational function $\phi \in K(z)$ that interprets $\tau$ with inclusion and has no poles in $U$. Moreover, $\phi$ can be chosen analytic on $U$, i.e., with all poles in $\PP^1(K) \setminus U$.
\end{corollary}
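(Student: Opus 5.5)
The plan is to combine Corollary~\ref{cor:existence_strong} with part~(1) of Corollary~\ref{cor:pole_free_with_control}, and if needed sharpen the pole location with part~(2). First, I define the piecewise-constant function $\psi\in\cO(U)$ by $\psi|_{B_i}\equiv b_{\tau(i)}$. Lemma~\ref{lem:disjoint_affinoids} guarantees that this is a well-defined analytic function on the disjoint affinoid union $U=\bigsqcup_i B_i$, and it is bounded by $\max_i|b_{\tau(i)}|$. Next I fix $\varepsilon$ with $0<\varepsilon<\min_i t_{\tau(i)}$, which is possible since there are finitely many positive radii, and apply Theorem~\ref{thm:runge} to obtain $\phi\in K(z)$ with $\norm{\phi-\psi}_U<\varepsilon$.

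The inclusion property then follows by the ultrametric estimate used in the proof of Corollary~\ref{cor:existence_strong}. For $z\in B_i$ we have $|\phi(z)-b_{\tau(i)}|=|\phi(z)-\psi(z)|<t_{\tau(i)}$, so $\phi(B_i)\subseteq B'_{\tau(i)}$. Since $B_i\neq\varnothing$, this image in particular meets the target, so $\phi$ is an interpreter with inclusion.

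For the pole-freeness, the sup-norm bound is finite, so Corollary~\ref{cor:pole_free_with_control}(1) gives that $\phi$ has no poles on $U$. Concretely, a pole at $x\in U$ would make $|\phi|$ unbounded near $x$, contradicting $\norm{\phi-\psi}_U<\varepsilon$ together with the boundedness of $\psi$. Hence $\phi\in\cO(U)$ and all poles lie in $\PP^1(K)\setminus U$, which is the ``moreover'' clause.

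The one delicate point is Definition~\ref{def:interpreter}, which asks for analyticity on an \emph{open neighborhood} of $U$, not merely on $U$. Since $\phi$ has finitely many poles, all outside the closed set $U$, and each ball $B_i$ is also open in the ultrametric topology, the complement of the pole set is an open neighborhood of $U$. If a cleaner statement is wanted, I would instead invoke Corollary~\ref{cor:pole_free_with_control}(2) with $V$ chosen as a small open disc in each complementary component (for instance around $\infty$, together with points separating the balls as needed), so that the poles land in prescribed locations away from $U$. I expect the only real obstacle to be justifying the Runge density with poles in $V$; here I would simply cite the rigid Runge theorem as the paper already does.
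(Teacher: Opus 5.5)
Your proposal is correct and takes essentially the paper's route: the same Runge approximation of the piecewise-constant $\psi$ from Corollary~\ref{cor:existence_strong}, except that you read off pole-freeness from part~(1) of Corollary~\ref{cor:pole_free_with_control} (finiteness of $\norm{\phi-\psi}_U$), whereas the paper cites part~(2) with $V=\PP^1(K)\setminus U$. The two are equivalent here, since a rational function uniformly close to a bounded function on $U$ cannot have a pole on $U$, and your check of the open-neighborhood clause in Definition~\ref{def:interpreter} correctly supplies a detail the paper leaves implicit.
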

\begin{proof}
This follows from Corollary~\ref{cor:pole_free_with_control} by taking $V = \PP^1(K) \setminus U$ (which is an admissible open) and applying the construction from Corollary~\ref{cor:existence_strong}.
\end{proof}

\begin{remark}[Runge Approximation with Prescribed Poles]
\label{rem:runge_with_poles_reference}
The statement in Corollary~\ref{cor:pole_free_with_control}(2) is a standard result in rigid analytic geometry and underlies approximation arguments in $p$-adic dynamics (see~\cite{BenedettoDynamics}). This is the rigid-analytic Runge theorem for a Runge pair $(U,V)$; see~\cite{BGR} (Runge pairs and admissible opens) or~\cite{FvdP} for the affinoid case.\footnote{The Archimedean analogue is Runge's theorem on approximation by rational functions with poles in a prescribed set.} The key hypothesis is that $V$ is an admissible open that meets every connected component of $\PP^1(K) \setminus U$; under this condition, the subalgebra of rational functions with poles contained in $V$ is dense in $\cO(U)$ for the sup-norm topology. 

Be careful: ``poles contained in a prescribed \emph{compact}'' is not true without additional hypotheses (e.g., Runge pair conditions); the natural object to prescribe is an admissible open $V$ meeting each complementary component. The domain $U$ here is a finite union of closed balls, which forms a Weierstrass domain (affinoid) in the sense of rigid analytic geometry.
\end{remark}

\begin{remark}
This result provides a qualitative answer to the \emph{inverse lifting problem}. While classical interpolation results (e.g., Mahler's theorem~\cite{Mahler1958}) address the representation of continuous functions, our construction ensures that the continuous dynamics mirrors the combinatorial transitions of the functional graph at a prescribed resolution $\varepsilon$. This connection between local analyticity and global combinatorial structure is a central theme in arithmetic dynamics~\cite{JonesManes2014, Poonen2014}.
\end{remark}

\begin{remark}[Alternative construction via $\varepsilon$-gluing]
\label{rem:alternative_methods}
A complementary construction is given in~\cite{PBNC2025}. Let $B = \bigcup_i B_{r_i}(a_i)$ be a finite union of disjoint rational balls and let $f_i$ be local rational or analytic maps with $f_i(B_{r_i}(a_i)) = B_{t_i}(b_i)$ for each $i$. Under the hypothesis that $f_i(B) \subset B_1(0)$ for all $i$, Theorem~4.2 of~\cite{PBNC2025} yields, for every $\varepsilon>0$, a global rational map $F_\varepsilon$ such that $F_\varepsilon(B_{r_i}(a_i))=B_{t_i}(b_i)$ and $|F_\varepsilon(z)-f_i(z)|<\varepsilon$ on $B_{r_i}(a_i)$ for each $i$. In the proof of Theorem~4.2, an explicit formula $F_\varepsilon(z)=\sum_i f_i(z)h_i(z)$ is given. See~\cite{PBNC2025} for the precise statement. The Runge approach in this section gives existence under minimal hypotheses (pairwise disjoint balls, any complete non-Archimedean field) but does not provide uniform degree or height bounds.
\end{remark}
 \section{Robust Exactness via Linear Dominance}
\label{sec:robust}

Interpretation with inclusion is achieved through approximation (Corollary~\ref{cor:existence_strong}); exact interpretation ($\phi(B_i) = B_{\tau(i)}'$) is a more delicate property. In this section, we show that exactness is robust whenever the map satisfies a local linear dominance condition.

\subsection{Linear Dominance and Ball Mapping}
\begin{definition}[Linear Dominance]
\label{def:dominance}
Let $f$ be an analytic function on $\Ball{a}{r}$ with expansion $f(z) = \sum_{k \ge 0} c_k (z-a)^k$. We say $f$ has \emph{linear dominance} on the ball if
\[
\max_{k \ge 2} |c_k| r^{k-1} < |c_1|.
\]
\end{definition}
\begin{remark}[Local similarity on the ball]
Under linear dominance, for $x,y\in \Ball{a}{r}$ we have
\[
|\phi(x)-\phi(y)| = |\phi'(a)|\,|x-y|.
\]
Thus the map is a similarity on the ball (Lipschitz constant $|\phi'(a)|$). This implication depends on linear dominance; in general, higher-order terms can change the local Lipschitz behavior.
\end{remark}
\begin{remark}[Intrinsic character]
This condition is intrinsic to the ball: it is invariant under affine isometries (conjugacy by $\sigma(z)=\alpha z+\beta$ with $|\alpha|=1$), so it does not depend on the choice of center; see Lemma~\ref{lem:conjugacy_invariance_affine_isometries} in Section~\ref{sec:closing_the_loop}.
\end{remark}

\begin{lemma}[Exactness Lemma]
\label{lem:dominance_image}
If $f$ has linear dominance on $\Ball{a}{r}$, then the image $f(\Ball{a}{r})$ is exactly the ball $\Ball{f(a)}{|f'(a)|r}$.
This lemma provides the proof-level mechanism behind Remark~\ref{rem:image_ball}.
\end{lemma}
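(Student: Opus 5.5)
Set $c_k$ for the Taylor coefficients of $f$ at $a$, so that $c_0=f(a)$ and $c_1=f'(a)\neq 0$; note that $c_1\neq 0$ is forced by the strict inequality in Definition~\ref{def:dominance}. Put $s=|c_1|r$. The proof has two parts: the inclusion $f(\Ball{a}{r})\subseteq\Ball{f(a)}{s}$, and surjectivity onto that ball. The second part is the real content.

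\textbf{Inclusion.} For $z\in\Ball{a}{r}$, write $f(z)-f(a)=c_1(z-a)+\sum_{k\ge2}c_k(z-a)^k$. For $k\ge2$,
\[
|c_k(z-a)^k|\le |c_k|r^{k-1}|z-a|<|c_1||z-a|
\]
whenever $z\neq a$. The ultrametric inequality then gives $|f(z)-f(a)|=|c_1||z-a|\le s$, so the image lies in the target ball. The same computation applied to $f(x)-f(y)$, after factoring $x-y$ out of $(x-a)^k-(y-a)^k$, recovers the similarity remark.

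\textbf{Surjectivity.} Fix $w$ with $|w-f(a)|\le s$. We need a solution $z\in\Ball{a}{r}$ of $f(z)=w$. I would reduce this to a fixed-point problem. Substitute $z=a+c_1^{-1}u$, so that $|u|\le s$ corresponds exactly to $|z-a|\le r$. The equation becomes
\[
u=(w-f(a))-\sum_{k\ge2}c_kc_1^{-k}u^k=:T(u).
\]
On $\Ball{0}{s}$, the map $T$ sends the ball into itself, because each term $|c_kc_1^{-k}u^k|\le |c_k|r^{k}<|c_1|r=s$. It is also a strict contraction: for $|u|,|v|\le s$,
\[
|c_kc_1^{-k}(u^k-v^k)|\le |c_k||c_1|^{-k}s^{k-1}|u-v|=(|c_k|r^{k-1}/|c_1|)|u-v|.
\]
The contraction constant is therefore $\lambda=\sup_{k\ge2}|c_k|r^{k-1}/|c_1|$.

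\textbf{Main obstacle.} The hypothesis gives $\lambda<1$ only as a maximum over $k\ge2$. This is legitimate because $f$ converges on the closed ball, so $|c_k|r^k\to0$ and the supremum is attained. Hence $\lambda<1$. Since $K$ is complete, the closed ball is complete, and Banach's fixed-point theorem yields $u$, hence $z$, with $f(z)=w$.

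As an alternative to the contraction argument, one could invoke the Weierstrass preparation theorem. Linear dominance says that $f-w$ has Weierstrass degree exactly $1$ on the ball, so it has exactly one zero there. I would mention this as a cross-check, citing \cite{BGR} or \cite{BenedettoDynamics}.

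Combining inclusion and surjectivity gives $f(\Ball{a}{r})=\Ball{f(a)}{|f'(a)|r}$.
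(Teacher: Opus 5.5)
Your proof is correct and is essentially the paper's argument. The inclusion comes from strict ultrametric dominance of the linear term, and surjectivity comes from the contraction mapping principle; in your coordinates $u=c_1(z-a)$, your map $T$ is just the paper's $T(z)=z-c_1^{-1}(f(z)-w)$ after rescaling, and your remark that the maximum in the linear-dominance condition is attained because $|c_k|r^k\to 0$ makes explicit a point the paper uses implicitly.
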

\begin{proof}
For any $z \in \Ball{a}{r}$, let $\delta = z-a$ with $|\delta| \leq r$. Then $|f(z) - f(a)| = |\sum_{k \ge 1} c_k \delta^k|$. By Definition~\ref{def:dominance}, for $k \ge 2$:
\[
|c_k \delta^k| = |c_k| |\delta|^{k-1} |\delta| \leq (|c_k| r^{k-1}) |\delta| < |c_1| |\delta|.
\]
Thus the $k=1$ term strictly dominates the sum, and the ultrametric equality gives $|f(z) - f(a)| = |c_1 \delta| = |c_1| |z-a|$. This shows the inclusion into $\Ball{f(a)}{|c_1|r}$. To prove surjectivity, fix $w \in \Ball{f(a)}{|c_1|r}$ and consider the map $T(z) = z - c_1^{-1}(f(z) - w)$. For any $z, z' \in \Ball{a}{r}$, we have
\[
T(z) - T(z') = (z - z') - c_1^{-1}(f(z) - f(z')) = -c_1^{-1} \sum_{k \ge 2} c_k ((z-a)^k - (z'-a)^k).
\]
For $z, z' \in \Ball{a}{r}$ we have $|z-a|, |z'-a| \leq r$. Using the identity $X^k - Y^k = (X-Y) \sum X^j Y^{k-1-j}$, we get $|(z-a)^k - (z'-a)^k| \leq |z-z'| r^{k-1}$. Thus,
\[
|T(z) - T(z')| \leq |c_1|^{-1} |z-z'| \max_{k \ge 2} |c_k| r^{k-1} = \theta |z-z'|,
\]
where $\theta = |c_1|^{-1} \max_{k \ge 2} |c_k| r^{k-1} < 1$ by linear dominance. Hence $T$ is a strict contraction. Since $|T(a) - a| = |c_1|^{-1} |f(a) - w| \leq r$ and $T$ is $\theta$-Lipschitz on $\Ball{a}{r}$ with $\theta<1$, for any $z\in \Ball{a}{r}$ we have
\[
|T(z)-a|\le \max\big(|T(z)-T(a)|,\;|T(a)-a|\big)\le \max(\theta|z-a|,r)\le r.
\]
Hence $T$ maps $\Ball{a}{r}$ to itself. The closed ball $\Ball{a}{r}$ is complete (because $K$ is complete and a closed ball in an ultrametric space is complete). By the non-Archimedean contraction mapping principle (see~\cite{BenedettoDynamics}; the rigid-analytic formulation is in \cite{BGR}),\footnote{In the Archimedean setting the analogous result is Banach's contraction principle.} $T$ has a unique fixed point $z^* \in \Ball{a}{r}$, which satisfies $f(z^*) = w$.
\end{proof}

\subsection{Robustness Theorem}
We can now prove that exactness is an open property in the sup-norm.

\begin{lemma}[Ultrametric Ball Coincidence]
\label{lem:ball_coincidence}
In an ultrametric field, for balls of the same radius $r$ we have $B(c,r)=B(c',r)$ if and only if $|c-c'|\le r$.
\end{lemma}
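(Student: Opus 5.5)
The plan is to prove the two implications separately, using only the ultrametric triangle inequality $|x-y|\le\max(|x-z|,|z-y|)$ and the definition $B(c,r)=\{z\in K:|z-c|\le r\}$. The argument rests on one observation: in an ultrametric space every point of a closed ball is a center of that ball. The proof will essentially be a recorded form of that fact.

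For the direction ``if'', assume $|c-c'|\le r$. Take any $z\in B(c,r)$, so that $|z-c|\le r$. The ultrametric inequality gives
\[
|z-c'|\le\max\bigl(|z-c|,\;|c-c'|\bigr)\le r,
\]
hence $z\in B(c',r)$ and $B(c,r)\subseteq B(c',r)$. The hypothesis $|c-c'|\le r$ is symmetric in $c$ and $c'$, so the same computation with the roles exchanged gives the reverse inclusion. The two balls are therefore equal.

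For the direction ``only if'', assume $B(c,r)=B(c',r)$. Since $|c-c|=0\le r$, we have $c\in B(c,r)=B(c',r)$, and this means exactly $|c-c'|\le r$. One could also cite Lemma~\ref{lem:nesting} here, but it is not needed.

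I see no real obstacle. The only point requiring care is that the equality of the radii is used twice. It makes the containment symmetric in the first direction, and it lets membership in one ball translate directly into the bound $|c-c'|\le r$ in the second. No completeness of $K$ or algebraic closure is required, so the lemma holds in any ultrametric field. That generality matters, because the lemma is applied later both over $\mathbb{C}_p$ and over the unramified base.
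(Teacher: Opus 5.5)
Your proof is correct and is essentially the paper's argument. In the forward direction you read off $|c-c'|\le r$ from one center lying in the other ball (you use $c\in B(c',r)$, the paper uses $c'\in B(c,r)$), and in the reverse direction you prove one inclusion by the ultrametric inequality and get the other by symmetry, exactly as the paper does.
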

\begin{proof}
($\Rightarrow$) If $B(c,r)=B(c',r)$, then $c'\in B(c,r)$, so $|c-c'|\le r$.

($\Leftarrow$) If $|c-c'|\le r$, then $c'\in B(c,r)$. For any $z\in B(c,r)$ we have $|z-c|\le r$; by the ultrametric inequality $|z-c'|\le \max(|z-c|,|c-c'|)\le r$, so $z\in B(c',r)$. Thus $B(c,r)\subseteq B(c',r)$. Symmetrically $B(c',r)\subseteq B(c,r)$, hence $B(c,r)=B(c',r)$.
\end{proof}

\begin{theorem}[Robust Exactness]
\label{thm:robust_exact}
Let $(\mathcal{B}, \tau, \mathcal{B}')$ be a ball system. Let $\psi$ be the piecewise affine interpreter from Theorem~\ref{thm:analytic_existence}, where $|u_i| = t_{\tau(i)}/r_i$. Let $\varepsilon$ satisfy
\[
\varepsilon < \min_i |u_i| r_i = \min_i t_{\tau(i)}.
\]
Then any rational function $\phi$ such that $\norm{\phi - \psi}_U < \varepsilon$ is an exact interpreter of $\tau$.
\end{theorem}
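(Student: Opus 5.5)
The plan is to check, ball by ball, that $\phi$ inherits linear dominance from the affine piece $\psi|_{B_i}$, and then to apply the Exactness Lemma (Lemma~\ref{lem:dominance_image}) together with Ultrametric Ball Coincidence (Lemma~\ref{lem:ball_coincidence}).

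First, since $\norm{\phi-\psi}_U<\varepsilon<\infty$ and $\psi$ is bounded on $U$, Corollary~\ref{cor:pole_free_with_control}(1) shows that $\phi$ has no poles on $U$. So $\phi|_{B_i}\in\cO(B_i)$ for each $i$, with an expansion $\phi(z)=\sum_{k\ge0}c_k(z-a_i)^k$ on $B_i=\Ball{a_i}{r_i}$.

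Next, fix $i$ and compare $\phi$ with $\psi_i(z)=b_{\tau(i)}+u_i(z-a_i)$ using Coefficient Stability (Lemma~\ref{lem:cauchy_diff}). Set $h=\phi-\psi_i$ on $B_i$, so $\norm{h}_{B_i}<\varepsilon$. Since $\psi_i$ has only the coefficients $b_{\tau(i)}$ and $u_i$, this gives three bounds:
\[
|c_0-b_{\tau(i)}|<\varepsilon,\qquad |c_1-u_i|\,r_i<\varepsilon,\qquad |c_k|\,r_i^k<\varepsilon\ (k\ge2).
\]
The hypothesis $\varepsilon<|u_i|r_i$ now does all the work.

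\begin{itemize}
\item From the second bound, $|c_1-u_i|<\varepsilon/r_i<|u_i|$. The ultrametric equality then gives $|c_1|=|u_i|$.
\item From the third bound, $|c_k|r_i^{k-1}<\varepsilon/r_i$ for every $k\ge2$. Hence $\max_{k\ge2}|c_k|r_i^{k-1}\le\varepsilon/r_i<|u_i|=|c_1|$.
\end{itemize}

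This last step is the only delicate point. The maximum in Definition~\ref{def:dominance} is really a supremum over infinitely many terms, so strict inequality for each $k$ would not suffice on its own. Here it is secured because every term is bounded by the same quantity $\norm{h}_{B_i}/r_i$, which is itself at most $\varepsilon/r_i<|c_1|$. Thus $\phi$ has linear dominance on $B_i$.

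Finally, Lemma~\ref{lem:dominance_image} yields
\[
\phi(B_i)=\Ball{\phi(a_i)}{|c_1|r_i}=\Ball{\phi(a_i)}{t_{\tau(i)}},
\]
using $|c_1|r_i=|u_i|r_i=t_{\tau(i)}$. Since $|\phi(a_i)-b_{\tau(i)}|=|c_0-b_{\tau(i)}|<\varepsilon<t_{\tau(i)}$, Lemma~\ref{lem:ball_coincidence} identifies this ball with $\Ball{b_{\tau(i)}}{t_{\tau(i)}}=B'_{\tau(i)}$. As $i$ was arbitrary, $\phi(B_i)=B'_{\tau(i)}$ for all $i$, so $\phi$ is an exact interpreter of $\tau$.
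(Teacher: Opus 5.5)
Your proof is correct and follows essentially the same route as the paper: coefficient stability gives $|\phi'(a_i)|=|u_i|$ together with the uniform bound $|c_k|r_i^{k-1}\le\varepsilon/r_i<|u_i|$ for $k\ge 2$, and the Exactness Lemma plus ultrametric ball coincidence then yield $\phi(B_i)=B'_{\tau(i)}$. Your preliminary check that $\phi$ has no poles on $U$, and your remark that the bound is uniform in $k$, are small clarifications that the paper leaves implicit.
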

\begin{proof}
Fix $i$ and consider $\phi$ on $B_i$. By Lemma~\ref{lem:cauchy_diff}:
\begin{enumerate}
    \item $|\phi'(a_i) - u_i| \leq \varepsilon/r_i < |u_i|$. By the ultrametric triangle inequality, $|\phi'(a_i)| = |u_i|$.
    \item For $k \ge 2$, $|c_k(\phi)| \leq \varepsilon/r_i^k$, so $|c_k(\phi)| r_i^{k-1} \leq \varepsilon/r_i < |u_i| = |\phi'(a_i)|$.
\end{enumerate}
Thus $\phi$ satisfies linear dominance on $B_i$. By Lemma~\ref{lem:dominance_image},
\[
\phi(B_i) = \Ball{\phi(a_i)}{|\phi'(a_i)| r_i} = \Ball{\phi(a_i)}{t_{\tau(i)}}.
\]
Since $\psi(a_i)=b_{\tau(i)}$ and $\norm{\phi-\psi}_U<\varepsilon<t_{\tau(i)}$, we obtain
\[
|\phi(a_i)-b_{\tau(i)}|
=
|\phi(a_i)-\psi(a_i)|
\le \norm{\phi-\psi}_U
< \varepsilon
< t_{\tau(i)}.
\]
In particular, $|\phi(a_i)-b_{\tau(i)}|\le t_{\tau(i)}$. By Lemma~\ref{lem:ball_coincidence}, we have
\[
\Ball{\phi(a_i)}{t_{\tau(i)}}=\Ball{b_{\tau(i)}}{t_{\tau(i)}}=B'_{\tau(i)}.
\]
Hence $\phi(B_i) = B_{\tau(i)}'$.
\end{proof}

\begin{corollary}[Stability of Multipliers]
\label{cor:multiplier_stability}
Let $\phi$ be an exact interpreter as in Theorem~\ref{thm:robust_exact}. If $x$ is a fixed point of $F$ (so $\tau(i)=i$) and $\alpha \in B_i$ is a fixed point of $\phi$, then the $p$-adic multiplier $\lambda = \phi'(\alpha)$ satisfies:
\[
|\lambda| = |u_i| = \frac{t_i}{r_i}.
\]
In particular, the stability type (contractive, expansive, or indifferent) of the fixed point $\alpha$ is determined solely by the affine model $\psi$ and is invariant under $\varepsilon$-perturbations.
\end{corollary}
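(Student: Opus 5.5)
The plan is to reuse the coefficient control from the proof of Theorem~\ref{thm:robust_exact}, but now at the fixed point $\alpha$ rather than at the center $a_i$. We work on the single ball $B_i=\Ball{a_i}{r_i}$, where $\tau(i)=i$, and we have $\norm{\phi-\psi}_{B_i}<\varepsilon<t_i=|u_i|r_i$.

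The first step is to recenter. By the ultrametric structure, every point of $B_i$ is a center: $\Ball{\alpha}{r_i}=B_i$ for $\alpha\in B_i$ (Lemma~\ref{lem:ball_coincidence}). So we expand $\phi$ and $\psi$ in powers of $(z-\alpha)$ on this same ball. Since $\psi|_{B_i}(z)=b_i+u_i(z-a_i)$ is affine, its expansion around $\alpha$ has linear coefficient $u_i$ and no higher-order terms. We then apply Lemma~\ref{lem:cauchy_diff} on $\Ball{\alpha}{r_i}$ with $k=1$, which gives
\[
|\phi'(\alpha)-u_i|\,r_i\le \norm{\phi-\psi}_{B_i}<\varepsilon<|u_i|r_i .
\]
Hence $|\phi'(\alpha)-u_i|<|u_i|$, and the strict ultrametric inequality yields $|\lambda|=|\phi'(\alpha)|=|u_i|=t_i/r_i$.

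For the ``in particular'' clause, the classification of the fixed point $\alpha$ as attracting, repelling or indifferent depends only on whether $|\lambda|<1$, $>1$ or $=1$. By the computation above this equals the comparison of $t_i/r_i$ with $1$, a quantity determined by the affine model $\psi$ alone. Any other $\phi$ within $\varepsilon$ of $\psi$ has the same $|\lambda|$ at each of its fixed points in $B_i$, which gives the invariance. One may also note that this ratio is exactly the ball-level ratio $\sigma_i$ of Remark~\ref{rem:image_ball}, computed with $|\phi'(a_i)|$; the two coincide.

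The only delicate point is the recentering step. We must make sure that the Cauchy estimate of Lemma~\ref{lem:cauchy_diff} applies with center $\alpha$ and the same radius $r_i$, which is legitimate because the ball and the sup-norm do not depend on the chosen center. We must also check that the expansion of the affine $\psi$ around $\alpha$ still has linear coefficient $u_i$. Both checks are immediate, so no real obstacle remains. Existence of $\alpha$ is assumed in the statement and is not needed for the argument; if desired, it follows from Lemma~\ref{lem:dominance_image} when $|u_i|\ge1$.
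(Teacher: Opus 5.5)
Your proof is correct, but it gets $|\phi'(\alpha)|=|u_i|$ by a different mechanism than the paper.

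\textbf{The paper's route.} The paper reuses the linear dominance of $\phi$ on $B_i$, established at the center $a_i$ in the proof of Theorem~\ref{thm:robust_exact}. It asserts that dominance forces $|\phi'(z)|=|u_i|$ throughout $B_i$, then specializes to $z=\alpha$. The implicit computation is that each term $k c_k(z-a_i)^{k-1}$, $k\ge 2$, of $\phi'(z)$ has absolute value at most $|c_k|r_i^{k-1}<|c_1|$.

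\textbf{Your route.} You bypass dominance entirely. You recenter at $\alpha$ via Lemma~\ref{lem:ball_coincidence} and apply Lemma~\ref{lem:cauchy_diff} to $\phi-\psi$ at the new center, noting that the affine $\psi$ keeps linear coefficient $u_i$ there.

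\textbf{Comparison.} The paper's version is a one-liner given the theorem, but it leaves the ``dominance at the center $\Rightarrow$ constant $|\phi'|$ on the ball'' step unstated. Yours is self-contained apart from the standard fact that an analytic function on a closed ball re-expands around any of its points, with the same ball and sup norm; you correctly flag and justify this. Yours never passes through linear dominance, and it gives the explicit pointwise bound $|\phi'(\alpha)-u_i|\le\varepsilon/r_i$.

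Two side remarks in your write-up are wrong, although the proof does not use them.

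First, $t_i/r_i$ is not the ratio $\sigma_i$ of Remark~\ref{rem:image_ball}. For an exact interpreter, $\sigma_i=|\phi'(a_i)|r_i/t_i=1$ always, because $\sigma_i$ compares the image with the target. By contrast, $|\lambda|=t_i/r_i$ compares the target radius with the source radius. The interpretation type and the fixed-point stability type are different classifications.

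Second, a fixed point in $B_i$ need not exist when $|u_i|=1$. Take a single ball $B_1=B'_1=\Ball{0}{1}$ with $\tau(1)=1$ and $\psi(z)=z$. The polynomial $\phi(z)=z+p$ satisfies $\norm{\phi-\psi}_{B_1}=p^{-1}<\varepsilon$ for any $\varepsilon$ with $p^{-1}<\varepsilon<1$. So it is an exact interpreter as in Theorem~\ref{thm:robust_exact}, yet it has no fixed point. Applying Lemma~\ref{lem:dominance_image} to $\phi(z)-z$ does give existence when $|u_i|\neq 1$, but only under the additional (and necessary) condition that $B_i$ and $B'_i$ meet.
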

\begin{proof}
By the linear dominance proven in Theorem~\ref{thm:robust_exact}, we have $|\phi'(z)| = |u_i|$ for all $z \in B_i$. Since $\alpha \in B_i$, $|\phi'(\alpha)| = |u_i|$.
\end{proof}

\begin{corollary}[Existence of Exact Interpreters via Runge with Interpolation]
\label{cor:exact_via_runge_interpolation}
For any ball system $(\mathcal{B}, \tau, \mathcal{B}')$ with $U = \bigcup B_i$, there exists a rational function $\phi \in K(z)$ that exactly interprets $\tau$ (i.e., $\phi(B_i) = B_{\tau(i)}'$ for all $i$).
\end{corollary}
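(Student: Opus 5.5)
The plan is to combine the piecewise affine exact interpreter of Theorem~\ref{thm:analytic_existence} with Runge approximation (Theorem~\ref{thm:runge}), and then let Theorem~\ref{thm:robust_exact} turn approximate closeness into exactness. Concretely, I first secure the affine model. For each $i$, choose $u_i\in K$ with $|u_i|=t_{\tau(i)}/r_i$. This is possible in $K=\Cp$ provided the radii lie in the value group $p^{\mathbb{Q}}$, which I will assume as in Theorem~\ref{thm:analytic_existence}. Using Lemma~\ref{lem:disjoint_affinoids}, I then form $\psi\in\cO(U)$ with $\psi|_{B_i}(z)=b_{\tau(i)}+u_i(z-a_i)$; this $\psi$ satisfies $\psi(B_i)=B'_{\tau(i)}$ exactly.

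Next, I fix $\varepsilon$ with $0<\varepsilon<\min_i t_{\tau(i)}=\min_i|u_i|r_i$. Theorem~\ref{thm:runge} (or Corollary~\ref{cor:pole_free_with_control}(2) with $V=\PP^1(K)\setminus U$, if I also want pole control) produces $\phi\in K(z)$ with $\norm{\phi-\psi}_U<\varepsilon$. By Corollary~\ref{cor:pole_free_with_control}(1), $\phi$ has no poles on $U$, so $\phi|_{B_i}$ is analytic on each ball and has a convergent Taylor expansion about $a_i$. This is exactly what is needed for the Cauchy estimates of Lemma~\ref{lem:cauchy_diff}.

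I then apply Theorem~\ref{thm:robust_exact} directly. The coefficient bounds give $|\phi'(a_i)|=|u_i|$ and $|c_k(\phi)|r_i^{k-1}\le\varepsilon/r_i<|u_i|$ for $k\ge2$, which is linear dominance. Lemma~\ref{lem:dominance_image} then gives $\phi(B_i)=\Ball{\phi(a_i)}{t_{\tau(i)}}$. Since $|\phi(a_i)-b_{\tau(i)}|<\varepsilon<t_{\tau(i)}$, Lemma~\ref{lem:ball_coincidence} identifies this ball with $B'_{\tau(i)}$. This yields exactness at every $i$, and in particular $\phi$ is an interpreter.

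The step I expect to cause the most trouble is the hypothesis that the radii lie in $|K^\times|$ and that the balls are genuine affinoids. If some ratio $t_{\tau(i)}/r_i$ fell outside the value group, no analytic map could have image exactly $B'_{\tau(i)}$ with linear dominance. So I will state the corollary under the standing assumption of Theorem~\ref{thm:analytic_existence}, which holds automatically for the cylinder radii $p^{-n}$ (Remark~\ref{rem:radii_unramified}). The title's ``interpolation'' could also be honored by additionally forcing $\phi(a_i)=b_{\tau(i)}$. To do this, I would correct $\phi$ by adding the Lagrange polynomial $P$ of Proposition~\ref{prop:lagrange_centers} interpolating the small errors $b_{\tau(i)}-\phi(a_i)$. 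However, the sup norm of $P$ on $U$ is controlled only by these errors times a constant depending on the centers. I would therefore shrink $\varepsilon$ by that constant before invoking Runge, so that $\phi+P$ still lies within the robustness radius. This refinement is optional, since exactness already follows without it.
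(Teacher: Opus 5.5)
Your proposal is correct and is essentially the paper's own proof: you build the piecewise affine exact interpreter $\psi$ of Theorem~\ref{thm:analytic_existence}, Runge-approximate it with $\varepsilon<\min_i t_{\tau(i)}$, and conclude by Theorem~\ref{thm:robust_exact}, whose Cauchy-estimate and ball-coincidence steps you simply unpack. The value-group hypothesis you state is the one the paper's proof uses silently through Theorem~\ref{thm:analytic_existence}, so making it explicit is appropriate, and the Lagrange correction is, as you say, unnecessary.
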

\begin{proof}
Let $\psi$ be the piecewise affine interpreter from Theorem~\ref{thm:analytic_existence}, and let $S = \{a_1, \dots, a_N\}$ be the set of centers. Choose $\varepsilon < \min_i t_{\tau(i)}$. By Lemma~\ref{lem:runge_finite_constraints}, there exists $\phi \in K(z)$ such that $\norm{\phi - \psi}_U < \varepsilon$, hence $|\phi(a_i) - \psi(a_i)| < \varepsilon$ for all $i$. Since $\psi(a_i) = b_{\tau(i)}$ by construction, we have $|\phi(a_i) - b_{\tau(i)}| < \varepsilon \leq t_{\tau(i)}$ for all $i$. By Theorem~\ref{thm:robust_exact}, $\phi$ is an exact interpreter of $\tau$.
\end{proof}

\begin{remark}
The relaxed condition $|\phi(a_i) - b_{\tau(i)}| \leq t_{\tau(i)}$ in Theorem~\ref{thm:robust_exact} is more natural than requiring exact equality. Corollary~\ref{cor:exact_via_runge_interpolation} shows that this condition can be achieved via Lemma~\ref{lem:runge_finite_constraints}, which gives control at finitely many points ($|\phi(s_j)-\psi(s_j)| \le \varepsilon$) without requiring exact interpolation. The ultrametric property ensures that balls of the same radius coincide if their centers are within that radius.
\end{remark}
 \section{Arithmetic Extensions: Witt Vectors and Profinite Coherence}
\label{sec:composite_alphabets}

\subsection{Motivation: Beyond Prime Alphabets}
Up to now, we have assumed an alphabet of size $p$, leading to dynamics on $\Zp$. However, in most discrete systems, the alphabet size $q$ is not necessarily prime. We now generalize by showing that the hierarchy of cylinder balls is equivalent to the hierarchy of \emph{congruence classes} in the ring of Witt vectors.

\subsection{From Multi-state Alphabets to Witt Cylinders}
\label{subsec:witt-cylinders}

Fix a prime $p$ and $q=p^k$. Let $K/\Qp$ be the unramified extension of degree $k$ with valuation ring $\cO_K$ and residue field $\kappa \cong \F_q$. A fundamental structural result in local field theory (see Serre~\cite{SerreLocalFields} and Hazewinkel~\cite{HazewinkelWitt}) is the identification $\cO_K \cong W(\kappa)$ (the Witt identification in the unramified case):
\begin{equation}
\label{eq:OK-as-Witt}
\cO_K \ \cong\ W(\kappa),
\end{equation}
where $W(\kappa)$ is the ring of $p$-typical Witt vectors over $\kappa$. Truncation modulo $p^N$ then corresponds to the ring of truncated Witt vectors of length $N$:
\begin{equation}
\label{eq:trunc}
\cO_K/p^N\cO_K \ \cong\ W_N(\kappa).
\end{equation}

\begin{remark}[Field Base Convention]
\label{rem:field_base}
In this section, we work with two related fields:
\begin{itemize}
    \item $K_0$ denotes the maximal unramified complete extension of $\mathbb{Q}_p$ with residue field
    $\kappa=\overline{\mathbb{F}}_p$ and ring of integers $\mathcal{O}_{K_0}\cong W(\kappa)$.
    In the parts of the paper where we use the Witt-vector description via the fixed identification $\cO_{K_0}\cong W(\kappa)$ (unramified base field $K_0/\mathbb{Q}_p$), we typically take $K=K_0$.
    \item In Section~\ref{sec:existence}, we require $K$ to be algebraically closed with divisible value group,
    so we work in $K=\mathbb{C}_p$, and we fix once and for all an embedding $K_0\hookrightarrow\mathbb{C}_p$.
    This yields the embedding $\mathcal{O}_{K_0}\hookrightarrow \mathcal{O}_{\mathbb{C}_p}$ (not an isomorphism).
    \item The truncation map $\pi_N:\mathcal{O}_{K_0}\twoheadrightarrow \mathcal{O}_{K_0}/p^N\mathcal{O}_{K_0}\cong W_N(\kappa)$
    is the reduction in the unramified setting. When working in $K=\mathbb{C}_p$ (Section~\ref{sec:existence}),
    we view $\mathcal{O}_{K_0}$ as a distinguished subring of $\mathcal{O}_{\mathbb{C}_p}$ via the fixed embedding,
    and we apply $\pi_N$ only after restricting to elements of $\mathcal{O}_{K_0}$ (in particular, to points lying in Witt cylinders).
\end{itemize}
\textbf{Global convention:} From now on, we fix an embedding $K_0 \hookrightarrow \mathbb{C}_p$ and work with the fixed identification
$\mathcal{O}_{K_0} \cong W(\kappa)$ for the unramified extension $K_0/\mathbb{Q}_p$ (with $\kappa=\overline{\mathbb{F}}_p$).
The \emph{Witt cylinders} $\mathbb{B}_N(x)=\pi_N^{-1}(x)$ for $x\in W_N(\kappa)$ are congruence balls of depth $N$ in $\mathcal{O}_{K_0}$,
viewed naturally as subsets of $\mathbb{C}_p$ via the fixed embedding. In particular, while analytic constructions in
Section~\ref{sec:existence} take place over $\mathbb{C}_p$, all cylinder constraints are imposed on subsets of
$\mathcal{O}_{K_0}\subset \mathbb{C}_p$; no reduction map on $\mathcal{O}_{\mathbb{C}_p}$ is used.
The relationship between $K_0$ and $K=\mathbb{C}_p$ is summarized by the commutative diagram
\[
\begin{tikzcd}
K_0 \arrow[r, hook] & \mathbb{C}_p \\
\mathcal{O}_{K_0} \arrow[r, hook] \arrow[d, "\pi_N"] & \mathcal{O}_{\mathbb{C}_p} \\
W_N(\kappa) &
\end{tikzcd}
\]
where the horizontal arrows are induced by the fixed field embedding $K_0\hookrightarrow \mathbb{C}_p$, and the only truncation map
$\pi_N$ used in the paper is the reduction on $\mathcal{O}_{K_0}$ (equivalently, on $W(\kappa)$).
\end{remark}

For each configuration $x \in W_N(\kappa)$, we define the associated \emph{Witt cylinder ball} $\mathbb{B}_N(x) := \pi_N^{-1}(x)$, which generalizes the state cylinders defined in Section~\ref{sec:encoding} to non-prime alphabets. In the special case $k=1$ (where $q=p$), we have $\cO_K = \mathbb{Z}_p$ and these Witt cylinders coincide with the state cylinders $B_x$ defined in Equation~\eqref{eq:encoding}. These cylinders form a partition of $\cO_K$ at resolution depth $N$.

\begin{remark}[Unramified vs.\ ramified]
Ball-semantics and interpretation (Section~\ref{sec:semantics}) make sense over any complete non-Archimedean field; the Witt-cylinder coding and Teichmüller representatives used here are set up for \emph{unramified} extensions. For ramified extensions, the residue ring is not of the form $\mathbb{Z}/p^k\mathbb{Z}$ in general; the ramified case is deferred (Remark~\ref{rem:dcrt_ramified_scope}).
\end{remark}

\subsection{Compatible Towers and Profinite Dynamics}
\label{subsec:towers-profinite}

A discrete rule $f_N$ is often just a truncation of a mechanism operating at higher resolutions. This is formalized by compatible towers.

\begin{definition}[Compatible Tower]
A family of maps $\{f_N: W_N(\kappa) \to W_N(\kappa)\}_{N \ge 1}$ is \emph{compatible} if for all $M \le N$, the truncation map $\tau_{N,M}: W_N(\kappa) \to W_M(\kappa)$ satisfies $\tau_{N,M} \circ f_N = f_M \circ \tau_{N,M}$.
\end{definition}

\begin{theorem}[Profinite Inverse Limit]
\label{thm:inverse-limit}
By the universal property of inverse limits~\cite{RibesZalesskii}, any compatible tower $\{f_N\}$ induces a unique continuous map $\widehat{f}: \cO_K \to \cO_K$ satisfying the coherence condition:
\begin{equation}
\label{eq:profinite-coherence}
\pi_N \circ \widehat{f} = f_N \circ \pi_N \quad \text{for all } N \ge 1.
\end{equation}
\end{theorem}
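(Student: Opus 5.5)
The plan is to realize $\cO_K$ as the inverse limit $\varprojlim_N W_N(\kappa)$ along the truncation maps $\tau_{N,M}$, and then obtain $\widehat f$ as the limit of the compatible system of maps $f_N\circ\pi_N:\cO_K\to W_N(\kappa)$. First I will check that the reductions $\pi_N:\cO_K\to\cO_K/p^N\cO_K\cong W_N(\kappa)$ from \eqref{eq:trunc} are compatible, $\tau_{N,M}\circ\pi_N=\pi_M$. Then I will show that the induced map $\cO_K\to\varprojlim_N W_N(\kappa)$ is a homeomorphism. Injectivity follows from $\bigcap_N p^N\cO_K=0$, and surjectivity from the completeness of $\cO_K$: a coherent sequence of residues gives a Cauchy sequence of lifts. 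For the topology, the fibers of $\pi_N$ are exactly the Witt cylinders $\mathbb{B}_N(x)$, which are closed balls of radius $p^{-N}$. So the subspace topology matches the $p$-adic topology, and for $\kappa=\F_q$ with $W_N(\kappa)$ finite and discrete, $\cO_K$ is profinite.

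Next I will define the family $g_N:=f_N\circ\pi_N:\cO_K\to W_N(\kappa)$. Compatibility of the tower gives
\[
\tau_{N,M}\circ g_N=\tau_{N,M}\circ f_N\circ\pi_N=f_M\circ\tau_{N,M}\circ\pi_N=f_M\circ\pi_M=g_M .
\]
Each $g_N$ is continuous because $\pi_N$ is locally constant: it is constant on the open balls $\mathbb{B}_N(x)$. The universal property of the inverse limit \cite{RibesZalesskii} then provides a unique continuous $\widehat f:\cO_K\to\varprojlim W_N(\kappa)\cong\cO_K$ with $\pi_N\circ\widehat f=g_N$, which is exactly \eqref{eq:profinite-coherence}. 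Uniqueness is already pointwise: the residues $\pi_N(\widehat f(z))$ for all $N$ determine $\widehat f(z)$ by injectivity of the comparison map.

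Continuity can also be checked directly: if $|z-z'|\le p^{-N}$ then $\pi_N(z)=\pi_N(z')$, so $\pi_N\widehat f(z)=\pi_N\widehat f(z')$. Hence $|\widehat f(z)-\widehat f(z')|\le p^{-N}$, and $\widehat f$ is in fact $1$-Lipschitz.

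The main obstacle is the identification $\cO_K\cong\varprojlim W_N(\kappa)$ as topological rings, which has to be compatible with the fixed Witt isomorphism \eqref{eq:OK-as-Witt}. In particular, the truncation maps $\tau_{N,M}$ on truncated Witt vectors must correspond to reduction $\cO_K/p^N\to\cO_K/p^M$. I will handle this by citing the standard fact that $W(\kappa)\to W_N(\kappa)$ has kernel $V^NW(\kappa)=p^NW(\kappa)$ for perfect $\kappa$. For $\kappa=\overline{\F}_p$ the quotients are infinite, so I will make sure to use only the completeness and separatedness argument above, not compactness.
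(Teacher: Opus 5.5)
Your proposal is correct and takes the same route as the paper, which simply invokes the universal property of the inverse limit (citing Ribes--Zalesskii) for the compatible family $f_N\circ\pi_N$. You also supply details the paper leaves implicit: the identification $\cO_K\cong\varprojlim_N W_N(\kappa)$ via separatedness and completeness (which still works for $\kappa=\overline{\F}_p$, where compactness is unavailable), the continuity of each $f_N\circ\pi_N$, and the $1$-Lipschitz bound, which the paper records only later in Corollary~\ref{rem:prop_tower}.
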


This allows us to view the inverse limit as the domain for the dynamical system: the discrete $f_N$ are finite-resolution shadows of a global continuous dynamics $\widehat{f}$ on the unramified base $\mathcal{O}_{K_0} \cong W(\kappa)$ (see Remark~\ref{rem:field_base}). A rational map $\phi$ that interprets each $f_N$ with inclusion (Lemma~\ref{lem:strong_commutation}) induces this same limit map $\widehat{f}$; in that sense $\phi$ realizes the global dynamics.

\subsection{Vertical refinement: Henselian lifting of exact cycles}
\label{subsec:hensel_cycles}

The Dynamic Chinese Remainder Theorem provides a \emph{horizontal} decomposition of discrete dynamics across coprime moduli. A complementary phenomenon is \emph{vertical} refinement along the inverse system $(\mathbb{Z}/p^n\mathbb{Z})_{n\ge 1}$, which corresponds to the profinite limit $\Zp$. In this direction, one can lift \emph{exact} periodic cycles from the residue field to $\Zp$ under the non-degeneracy condition $(\widetilde{\phi}^{\circ m})'(\bar{x})\neq 1$ (Theorem~\ref{thm:hensel_exact_cycle}). A sufficient condition for reduction to commute with composition is that $\phi$ be given by a strictly convergent power series in the Tate algebra $\cO_K\langle z\rangle$ (or more generally by an analytic function with integral coefficients so that coefficientwise reduction defines $\widetilde\phi$); the following lemma makes this checkable.

\begin{lemma}[Composition-compatible reduction]
\label{lem:reduction_composition}
Let $\cO_K$ be the valuation ring of a complete discretely valued field $K$ with residue field $\kappa$. Coefficientwise reduction is a ring homomorphism from the Tate algebra $\cO_K\langle z\rangle$ (strictly convergent power series with coefficients in $\cO_K$) to $\kappa[z]$; hence $\widetilde{(\phi\circ\psi)}=\widetilde\phi\circ\widetilde\psi$ for $\phi,\psi\in\cO_K\langle z\rangle$ (and thus $\widetilde{\phi^{\circ m}}=\widetilde\phi^{\circ m}$ for all $m\ge 1$). See, e.g.,~\cite{BGR} or~\cite{BenedettoDynamics} for the rigid setting.
\end{lemma}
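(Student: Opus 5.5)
To prove Lemma~\ref{lem:reduction_composition}, I would first show that reduction $\rho:\cO_K\langle z\rangle\to\kappa[z]$, $\sum c_k z^k\mapsto\sum \bar c_k z^k$, is a well-defined ring homomorphism. Then I would deduce compatibility with composition by continuity.

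\textbf{Well-definedness and homomorphism.} For $\phi=\sum c_k z^k\in\cO_K\langle z\rangle$ we have $c_k\to 0$, so $|c_k|<1$ for all but finitely many $k$. Hence $\bar c_k=0$ for $k\gg 0$, and $\widetilde\phi$ is a polynomial. Additivity is immediate. For multiplication, the Cauchy product coefficient $\sum_{i+j=k}c_i d_j$ is a finite sum in $\cO_K$. Its reduction is $\sum_{i+j=k}\bar c_i\bar d_j$ because $\cO_K\to\kappa$ is a ring map. Equivalently, $\rho$ is the composite
\[
\cO_K\langle z\rangle\to \cO_K\langle z\rangle/\mathfrak m\,\cO_K\langle z\rangle\cong\kappa[z].
\]

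\textbf{Composition.} Composition $\phi\circ\psi$ only makes sense when $\psi$ maps the closed unit disc into itself. This holds automatically, since $\psi\in\cO_K\langle z\rangle$ gives $|\psi(z)|\le 1$ for $|z|\le1$. One then defines
\[
\phi\circ\psi=\sum_k c_k\,\psi^k .
\]
This series converges in the Gauss norm $\|\cdot\|$ because $\|c_k\psi^k\|\le|c_k|\to0$, and $\cO_K\langle z\rangle$ is complete for that norm.

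Choose $M$ with $|c_k|<1$ for all $k>M$. Split
\[
\phi\circ\psi=\sum_{k\le M}c_k\psi^k+R,\qquad R=\sum_{k>M}c_k\psi^k .
\]
For the tail, $\|R\|\le\max_{k>M}|c_k|<1$, so every coefficient of $R$ lies in $\mathfrak m$ and $\rho(R)=0$. For the finite part, the homomorphism property from the first step gives
\[
\rho\Bigl(\sum_{k\le M}c_k\psi^k\Bigr)=\sum_{k\le M}\bar c_k\widetilde\psi^{\,k}=\widetilde\phi\circ\widetilde\psi,
\]
where the last equality holds because $\bar c_k=0$ for $k>M$. The iterate statement $\widetilde{\phi^{\circ m}}=\widetilde\phi^{\circ m}$ then follows by induction on $m$, using $\phi^{\circ m}=\phi\circ\phi^{\circ(m-1)}\in\cO_K\langle z\rangle$.

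\textbf{Main obstacle.} The delicate step is justifying that the tail $R$ reduces to zero. This needs two facts: $\phi\circ\psi$ is the Gauss-norm limit of its partial sums, and a Gauss-norm bound below $1$ forces every coefficient into $\mathfrak m$. I would check the second fact coefficientwise: $\|R\|=\sup_k |(\text{coefficient } k\text{ of }R)|$, and these coefficients are limits of finite sums of terms of size at most $\max_{k>M}|c_k|$. By the ultrametric inequality they therefore have size below $1$.

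The discreteness of the valuation is not really needed. For a non-discrete valuation the same argument works with $\mathfrak m=\{|c|<1\}$, provided one uses that $c_k\to0$ forces $|c_k|<1$ eventually.
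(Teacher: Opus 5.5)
Your argument is correct and follows the route the paper has in mind: the paper gives no proof beyond citing the standard rigid-analytic references and writing ``hence'' after the ring-homomorphism claim, and you expand exactly that by applying the homomorphism to the finite part of $\phi\circ\psi=\sum_k c_k\psi^k$. Your tail estimate, $\|R\|\le\max_{k>M}|c_k|<1$ and hence $\rho(R)=0$, supplies the one step the paper leaves implicit: a ring homomorphism alone does not control infinite sums, so some continuity of reduction for the Gauss norm is needed. Your remark that discreteness of the valuation plays no role is also right.
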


\begin{theorem}[Henselian lifting of exact periodic points]
\label{thm:hensel_exact_cycle}
Let $K$ be a complete discretely valued non-Archimedean field with valuation ring $\cO_K$, maximal ideal $\mathfrak{m}$, and residue field $\kappa=\cO_K/\mathfrak{m}$. Let $\phi\colon \cO_K \to \cO_K$ be an analytic map induced by a power series in $\cO_K\langle z\rangle$ (so reduction $\widetilde{\phi}\colon \kappa \to \kappa$ is defined and Lemma~\ref{lem:reduction_composition} applies). Fix $m\ge 1$ and assume $\bar{x}\in\kappa$ has \emph{exact} period $m$ under $\widetilde{\phi}$:
\[
\widetilde{\phi}^{\circ m}(\bar{x})=\bar{x},
\qquad
\widetilde{\phi}^{\circ d}(\bar{x})\neq \bar{x}\ \text{for all}\ d\mid m,\ d<m.
\]
Assume moreover the non-degeneracy condition
\[
(\widetilde{\phi}^{\circ m})'(\bar{x})\neq 1 \quad \text{in }\kappa.
\]
(Equivalently, $\bar{x}$ is a \emph{simple root} of $\widetilde{F}(z)=\widetilde{\phi}^{\circ m}(z)-z$, since $\widetilde{F}'(\bar{x})=(\widetilde{\phi}^{\circ m})'(\bar{x})-1$.)
Then there exists a \emph{unique} $x\in\cO_K$ reducing to $\bar{x}$ such that $\phi^{\circ m}(x)=x$. Furthermore, $x$ has \emph{exact} period $m$ under $\phi$.
\end{theorem}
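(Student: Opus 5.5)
Set $\Phi:=\phi^{\circ m}$ and $F(z):=\Phi(z)-z$. The plan is to apply Hensel's lemma to $F$ on the residue disc of $\bar x$, and then to read off exactness of the period from the reduction.

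\textbf{Step 1: $F$ and its reduction.} By Lemma~\ref{lem:reduction_composition}, $\Phi\in\cO_K\langle z\rangle$ and $\widetilde{\Phi}=\widetilde\phi^{\circ m}$. Hence $F\in\cO_K\langle z\rangle$ with $\widetilde F(z)=\widetilde\phi^{\circ m}(z)-z$.

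Coefficientwise reduction commutes with formal differentiation: the derivative of a strictly convergent series with integral coefficients is again in $\cO_K\langle z\rangle$. So $\widetilde{F'}=\widetilde F{}'$. The hypotheses therefore say:
\begin{itemize}
\item $\widetilde F(\bar x)=0$, and
\item $\widetilde F{}'(\bar x)=(\widetilde\phi^{\circ m})'(\bar x)-1\neq 0$.
\end{itemize}

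\textbf{Step 2: Hensel's lemma.} Pick any lift $x_0\in\cO_K$ of $\bar x$. Then $|F(x_0)|<1$ and $|F'(x_0)|=1$. Run Newton's iteration
\[
x_{n+1}=x_n-\frac{F(x_n)}{F'(x_n)},
\]
or equivalently the contraction $T(z)=z-F'(x_0)^{-1}F(z)$ on the closed ball $\Ball{x_0}{|\pi|}$, where $\pi$ is a uniformizer.

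To verify that $T$ is a contraction, I will follow the pattern of the proof of Lemma~\ref{lem:dominance_image}. Expand $F(z)=\sum_k c_k(z-x_0)^k$ with $|c_k|\le 1$ and $|c_1|=1$. On the ball of radius $|\pi|<1$ we get $|c_k|\,|\pi|^{k-1}<1=|c_1|$ for all $k\ge 2$, so $F$ has linear dominance there. The same estimate shows $T$ is $|\pi|$-Lipschitz and maps $\Ball{x_0}{|\pi|}$ into itself. Completeness of $\cO_K$ then gives a unique fixed point $x$ of $T$, i.e.\ a unique zero of $F$ in the residue disc of $\bar x$. Alternatively, one can quote Lemma~\ref{lem:dominance_image} directly: $F$ restricted to that disc is a bijection onto $\Ball{F(x_0)}{|\pi|}$, which contains $0$.

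Existence and uniqueness both follow. The ball is exactly the set of elements of $\cO_K$ reducing to $\bar x$, because the value group is discrete.

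\textbf{Step 3: exact period.} Let $d$ be the exact period of $x$ under $\phi$. Since $\Phi(x)=x$, we have $d\mid m$.

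Reduction is compatible with evaluation: for $g\in\cO_K\langle z\rangle$ and $y\in\cO_K$, the reduction of $g(y)$ equals $\widetilde g(\bar y)$. Applying this to $g=\phi^{\circ d}$ and using Lemma~\ref{lem:reduction_composition}, $\phi^{\circ d}(x)=x$ gives $\widetilde\phi^{\circ d}(\bar x)=\bar x$. Exactness of the period of $\bar x$ then forces $d=m$.

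\textbf{Main obstacle.} The only delicate point is the bookkeeping in Step 1: that $\widetilde{F'}=\widetilde F{}'$ and that reduction commutes with evaluation for strictly convergent series. Both hold because reduction $\cO_K\langle z\rangle\to\kappa[z]$ is a ring homomorphism, and convergence is preserved since $c_k\to0$. Once this is in place, the contraction estimate is the same computation as in Lemma~\ref{lem:dominance_image}.
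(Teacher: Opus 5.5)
Your proof is correct and follows the same route as the paper. Like the paper, you set $F(z)=\phi^{\circ m}(z)-z$, use Lemma~\ref{lem:reduction_composition} to get $\widetilde F(\bar x)=0$ and $\widetilde F{}'(\bar x)\neq 0$, obtain the unique lift by Hensel for simple roots, and deduce the exact period by reducing $\phi^{\circ d}(x)=x$. The only difference is that you prove the Hensel step explicitly via the contraction argument of Lemma~\ref{lem:dominance_image} (and check that reduction commutes with differentiation and evaluation), whereas the paper simply cites the classical simple-root Hensel lemma.
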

\begin{proof}
Define $F(z)=\phi^{\circ m}(z)-z$ on $\cO_K$. Since $\phi(\cO_K)\subseteq \cO_K$, we have $F(\cO_K)\subseteq \cO_K$ and thus $F$ reduces to a map $\widetilde{F}\colon \kappa\to\kappa$. By Lemma~\ref{lem:reduction_composition},
\[
\widetilde{F}(\bar{z})=\widetilde{\phi^{\circ m}}(\bar{z})-\bar{z}
=\widetilde{\phi}^{\circ m}(\bar{z})-\bar{z}.
\]
Hence $\widetilde{F}(\bar{x})=0$.

Differentiating gives $F'(z)=(\phi^{\circ m})'(z)-1$, so after reduction,
\[
\widetilde{F}'(\bar{x})=(\widetilde{\phi}^{\circ m})'(\bar{x})-1\neq 0
\]
by the non-degeneracy assumption. Therefore $\bar{x}$ is a \emph{simple root} of $\widetilde{F}$. By the classical Hensel lemma for simple roots (see, e.g.,~\cite{BGR} or~\cite{BenedettoDynamics}), there exists a unique $x\in\cO_K$ with $x\bmod\mathfrak{m}=\bar{x}$ and $F(x)=0$, i.e.\ $\phi^{\circ m}(x)=x$.

To show the period is \emph{exactly} $m$, suppose $\phi^{\circ d}(x)=x$ for some proper divisor $d\mid m$, $d<m$. Reducing modulo $\mathfrak{m}$ yields $\widetilde{\phi}^{\circ d}(\bar{x})=\bar{x}$, contradicting the exact-period assumption. Hence no proper divisor $d<m$ satisfies $\phi^{\circ d}(x)=x$, and the period of $x$ is $m$.
\end{proof}

\begin{remark}[What the equation $\phi^{\circ m}(x)=x$ does and does not guarantee]
\label{rem:period_divides}
The equation $\phi^{\circ m}(x)=x$ only implies that the minimal period of $x$ \emph{divides} $m$. Exactness requires additional input. In Theorem~\ref{thm:hensel_exact_cycle} exactness is inherited from the residue dynamics: if a shorter period existed upstairs, it would persist after reduction, contradicting the exact period of $\bar{x}$ in $\kappa$.
\end{remark}

\begin{remark}[Degenerate case]
\label{rem:hensel_degenerate}
If $(\widetilde{\phi}^{\circ m})'(\bar{x})=1$ in $\kappa$, then $\bar{x}$ is not a simple root of $\widetilde{F}$ and the simple-root Hensel lemma does not apply. In this degenerate situation, lifts may fail to exist or may be non-unique, and periodic structure can split at higher levels. A systematic treatment requires higher-order analysis beyond the scope of this paper.
\end{remark}

\begin{remark}[Why this does not lift all cycles simultaneously]
\label{rem:hensel_local_only}
The theorem is \emph{local} (per periodic point or per cycle) and hinges on the simple-root condition for $F(z)=\phi^{\circ m}(z)-z$ at the chosen residue class. Simultaneously lifting all periodic points of a given level typically requires additional global separation hypotheses (e.g.\ separation of balls and no collisions among lifts). For this reason, inverse-limit realization of the full tower is left as a direction (Route~2 in Appendix~\ref{app:profinite_addendum}), not as an automatic consequence of the theorem.
\end{remark}

\begin{corollary}[Tower of cycles and limit in $\mathbb{Z}_p$]
\label{cor:hensel_tower}
Let $K=\Qp$ and $\phi\colon \Zp\to\Zp$ be analytic. Suppose $\bar{x}\in\Fp$ satisfies the hypotheses of Theorem~\ref{thm:hensel_exact_cycle} for some $m\ge 1$. Then there is a unique sequence $(x_n)_{n\ge 1}$ with $x_n\in\mathbb{Z}/p^n\mathbb{Z}$ such that $x_{n+1}\equiv x_n\pmod{p^n}$, $\phi^{\circ m}(x_n)\equiv x_n\pmod{p^n}$, and $x_1=\bar{x}$. The limit $x=\lim_n x_n\in\Zp$ exists and is the unique periodic point of exact period $m$ lifting $\bar{x}$.
\end{corollary}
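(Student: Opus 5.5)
The plan is to derive the corollary directly from Theorem~\ref{thm:hensel_exact_cycle} with $K=\Qp$, $\cO_K=\Zp$, $\kappa=\Fp$. I read ``analytic'' as in that theorem, i.e.\ $\phi$ is given by a power series in $\Zp\langle z\rangle$. Then $\phi^{\circ m}$ also maps $\Zp$ to $\Zp$ and is compatible with every congruence: $z\equiv z'\pmod{p^n}$ implies $\phi^{\circ m}(z)\equiv\phi^{\circ m}(z')\pmod{p^n}$. So the condition $\phi^{\circ m}(x_n)\equiv x_n\pmod{p^n}$ is well defined on residue classes in $\mathbb{Z}/p^n\mathbb{Z}$.

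\textbf{Existence.} The theorem gives a unique $x\in\Zp$ with $x\equiv\bar x\pmod p$ and $\phi^{\circ m}(x)=x$, and this $x$ has exact period $m$. I set $x_n:=x\bmod p^n$. Compatibility $x_{n+1}\equiv x_n$ is automatic, and $x_1=\bar x$. The congruence $\phi^{\circ m}(x_n)\equiv x_n\pmod{p^n}$ holds because it holds for the lift $x$. The limit of the $x_n$ is $x$, and $x$ is the unique periodic point of exact period $m$ lifting $\bar x$: any such point solves $\phi^{\circ m}(y)=y$ with $y\equiv\bar x$, so uniqueness in the theorem applies.

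\textbf{Uniqueness of the sequence.} This is the main step, since the theorem only gives uniqueness of the exact root, not of approximate roots mod $p^n$. Set $F(z)=\phi^{\circ m}(z)-z\in\Zp\langle z\rangle$. Write $\bar F'(\bar x)\ne 0$ in $\Fp$, so $|F'(y)|=1$ for every $y\equiv\bar x\pmod p$. The Taylor expansion with integral coefficients gives
\[
F(y+h)=F(y)+F'(y)h+h^2G(y,h)
\]
with $G$ integral. Hence, for $y,y'\equiv\bar x\pmod p$,
\[
|F(y)-F(y')|=|y-y'|.
\]
This is the same dominance mechanism as Lemma~\ref{lem:dominance_image} on the residue disc $\Ball{\bar x}{p^{-1}}$.

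Now suppose $(x_n')$ is another compatible sequence satisfying the conditions, and lift it to $y'\in\Zp$. Compatibility with $x_1=\bar x$ gives $y'\equiv\bar x\pmod p$, and for every $n$ we have $|F(y')|\le p^{-n}$. Therefore $F(y')=0$, so $y'=x$ by the uniqueness in the theorem, and hence $x_n'=x_n$ for all $n$.

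Equivalently, and level by level: if $a,a'\in\mathbb{Z}/p^n\mathbb{Z}$ both reduce to $\bar x$ and both satisfy $F\equiv 0\pmod{p^n}$, then the isometry property gives $a\equiv a'\pmod{p^n}$. This shows uniqueness at each level, not just for the limit.

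I expect the only delicate point to be the isometry $|F(y)-F(y')|=|y-y'|$ on the residue disc. It needs integrality of the coefficients of $F$ (Lemma~\ref{lem:reduction_composition}) together with the unit derivative, so it is exactly where the hypothesis $\phi\in\Zp\langle z\rangle$ and the non-degeneracy condition are used.
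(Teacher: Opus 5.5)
Your proof is correct, but it runs in the opposite direction from the paper's.

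The paper works bottom-up. Starting from $x_1=\bar x$, it uses that $F'=(\phi^{\circ m})'-1$ is a unit on the residue class of $\bar x$ to lift each $x_n$ to a unique $x_{n+1}$ by a Newton--Hensel step, and only then passes to the limit. Existence and uniqueness of the sequence are thus produced level by level, and the exact root appears at the end.

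You work top-down. You take the exact root $x$ from Theorem~\ref{thm:hensel_exact_cycle} as a black box and truncate it. For uniqueness, you pass any other compatible sequence to its inverse-limit point $y'$, note $|F(y')|\le p^{-n}$ for all $n$, and invoke the theorem's uniqueness.

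What each route buys:
\begin{itemize}
\item Yours is shorter and quotes the theorem instead of re-running Hensel.
\item Yours also makes explicit why $\phi^{\circ m}$ is well defined on $\mathbb{Z}/p^n\mathbb{Z}$ (it is $1$-Lipschitz on $\Zp$), which the paper's sketch leaves implicit.
\item The paper's route is constructive, since it computes the $x_n$.
\item The paper's route also gives, for free, that each $x_n$ is the unique approximate root at level $n$ over $\bar x$.
\end{itemize}

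Your inverse-limit argument already proves the uniqueness claimed in the corollary without any isometry. So the isometry $|F(y)-F(y')|=|y-y'|$ on the residue disc over $\bar x$ is not ``the main step'' here. You need it only for the sharper level-wise uniqueness, and there it neatly replaces the paper's inductive Hensel steps.
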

\begin{proof}
Apply Theorem~\ref{thm:hensel_exact_cycle} at each level $n$: the non-degeneracy condition ensures $F'(x)\in\mathbb{Z}_p^\times$ at the unique lift, so iterated Hensel (or Newton--Hensel) yields the coherent sequence $(x_n)$ and its limit in $\Zp$.
\end{proof}

In the interpreter setting of Sections~\ref{sec:encoding}--\ref{sec:robust}, the state cylinders (balls of radius $p^{-n}$) partition $\Zp$ at each resolution $n$; as $n\to\infty$ these balls shrink to points. Theorem~\ref{thm:hensel_exact_cycle} and Corollary~\ref{cor:hensel_tower} formalize when \emph{exact} periodic structure is preserved along this vertical refinement: under non-degeneracy of the multiplier, a residual $m$-cycle lifts to a unique $m$-cycle in $\Zp$. The promises are modest (one cycle at a time; degenerate case excluded) and the logic is self-contained.

\subsection{Semantics as Congruence Control}
\label{subsec:semantics-congruence}

The interpretation semantics can be rephrased as conditions on the induced maps on quotients. Let $f_N: W_N(\kappa) \to W_N(\kappa)$ be a discrete rule, where $\pi_N: \cO_K \to W_N(\kappa)$ is the truncation map modulo $p^N$.

\begin{lemma}[Equivalence of Interpretation with Inclusion and Commutation]
\label{lem:strong_commutation}
Let $\phi: \cO_K \to \cO_K$ be a continuous map and $f_N: W_N(\kappa) \to W_N(\kappa)$ a discrete rule. Then $\phi$ interprets $f_N$ with inclusion if and only if $\pi_N \circ \phi = f_N \circ \pi_N$ on $\cO_K$.
\end{lemma}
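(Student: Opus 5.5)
The argument is a direct unwinding of definitions. The key fact is that the Witt cylinders $\mathbb{B}_N(x)=\pi_N^{-1}(x)$, for $x\in W_N(\kappa)$, are exactly the fibers of $\pi_N$, and that they partition $\cO_K$. We read the ball system in this setting as follows: the source balls are $B_x=\mathbb{B}_N(x)$, the transition rule is $\tau=f_N$, and the target balls are $B'_{f_N(x)}=\mathbb{B}_N(f_N(x))$. Then ``$\phi$ interprets $f_N$ with inclusion'' means, by Remark~\ref{rem:strong_exact}, that $\phi(\mathbb{B}_N(x))\subseteq \mathbb{B}_N(f_N(x))$ for every $x\in W_N(\kappa)$. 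Nonemptiness of the intersection is automatic, since cylinders are nonempty because $\pi_N$ is surjective. Since $\phi$ maps $\cO_K$ to $\cO_K$, all images lie where $\pi_N$ is defined, so no $\Cp$-points enter; this is consistent with Remark~\ref{rem:field_base}.

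For ($\Rightarrow$), take any $z\in\cO_K$ and set $x=\pi_N(z)$, so that $z\in\mathbb{B}_N(x)$. Inclusion gives $\phi(z)\in\mathbb{B}_N(f_N(x))=\pi_N^{-1}(f_N(x))$. Hence $\pi_N(\phi(z))=f_N(\pi_N(z))$.

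For ($\Leftarrow$), take $x\in W_N(\kappa)$ and $z\in\mathbb{B}_N(x)$. Commutation gives $\pi_N(\phi(z))=f_N(\pi_N(z))=f_N(x)$, so $\phi(z)\in\pi_N^{-1}(f_N(x))=\mathbb{B}_N(f_N(x))$. Thus $\phi(\mathbb{B}_N(x))\subseteq\mathbb{B}_N(f_N(x))$, and the intersection is nonempty because $\mathbb{B}_N(x)\neq\varnothing$.

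The only point needing care is identifying the fibers $\pi_N^{-1}(x)$ with the closed balls used in the ball-system formalism. We need each fiber to be a closed ball $\Ball{a}{p^{-N}}\cap\cO_K$, with $a$ any lift of $x$, because the interpretation notions were phrased for balls. This follows from $\ker\pi_N=p^N\cO_K=\{z:|z|\le p^{-N}\}\cap\cO_K$, since $K$ is unramified and $|p|=p^{-1}$. It also requires that ``interpretation'' is evaluated on these $\cO_K$-cylinders rather than on the full $\Cp$-balls. I expect this bookkeeping to be the main (mild) obstacle. Continuity of $\phi$ plays no role in the argument.
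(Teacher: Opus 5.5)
Your proposal is correct and matches the paper's proof essentially line for line: both unwind the definition via the fibers $\mathbb{B}_N(x)=\pi_N^{-1}(x)$ and check ($\Rightarrow$) by setting $x=\pi_N(z)$, and ($\Leftarrow$) by applying the commutation identity to an arbitrary $z\in\mathbb{B}_N(x)$. Your extra bookkeeping is harmless but not needed in the paper, which takes the cylinder inclusion $\phi(\mathbb{B}_N(x))\subseteq\mathbb{B}_N(f_N(x))$ directly as the definition. That bookkeeping covers nonemptiness of the intersection, the identification of fibers with $\cO_K$-balls, and the observation that continuity is unused.
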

\begin{proof}
Recall that $\phi$ interprets $f_N$ with inclusion if for every $x \in W_N(\kappa)$, we have $\phi(\mathbb{B}_N(x)) \subseteq \mathbb{B}_N(f_N(x))$, where $\mathbb{B}_N(x) = \pi_N^{-1}(x)$.

($\Rightarrow$) Suppose $\phi$ interprets $f_N$ with inclusion. For any $z \in \cO_K$, let $x = \pi_N(z)$, so $z \in \mathbb{B}_N(x)$. By interpretation with inclusion, $\phi(z) \in \mathbb{B}_N(f_N(x)) = \pi_N^{-1}(f_N(x))$, which means $\pi_N(\phi(z)) = f_N(x) = f_N(\pi_N(z))$. Since $z$ was arbitrary, $\pi_N \circ \phi = f_N \circ \pi_N$.

($\Leftarrow$) Suppose $\pi_N \circ \phi = f_N \circ \pi_N$. For any $x \in W_N(\kappa)$ and $z \in \mathbb{B}_N(x)$, we have $\pi_N(z) = x$, so $\pi_N(\phi(z)) = f_N(\pi_N(z)) = f_N(x)$. This means $\phi(z) \in \pi_N^{-1}(f_N(x)) = \mathbb{B}_N(f_N(x))$. Since $z$ was arbitrary in $\mathbb{B}_N(x)$, we conclude $\phi(\mathbb{B}_N(x)) \subseteq \mathbb{B}_N(f_N(x))$.
\end{proof}

\begin{remark}[Semantics as Congruences]
\label{rem:semantics-witt}
By Lemma~\ref{lem:strong_commutation}, the interpretation notions of Section~\ref{sec:semantics} can be rephrased as follows. $\phi$ \emph{interprets} $f_N$ if for every $x$, $\phi(\mathbb{B}_N(x)) \cap \mathbb{B}_N(f_N(x)) \neq \varnothing$; by the ultrametric nesting property, for each $x$ the interpretation type at that cylinder is contractive, indifferent, or expansive according as $\phi(\mathbb{B}_N(x)) \subsetneq \mathbb{B}_N(f_N(x))$, $\phi(\mathbb{B}_N(x)) = \mathbb{B}_N(f_N(x))$, or $\mathbb{B}_N(f_N(x)) \subsetneq \phi(\mathbb{B}_N(x))$.
\begin{enumerate}
    \item \textbf{Interpretation with inclusion:} $\phi$ interprets $f_N$ with inclusion (contractive or indifferent at every cylinder) if and only if $\pi_N \circ \phi = f_N \circ \pi_N$ on $\mathcal{O}_{K_0}$.
    \item \textbf{Exact interpretation:} $\phi$ exactly interprets $f_N$ (indifferent at every cylinder) if and only if $\phi$ interprets $f_N$ with inclusion and is surjective on each cylinder. Surjectivity is guaranteed by the linear dominance criterion (see Section~\ref{sec:robust}):
    \begin{equation}
    \label{eq:dominance-check}
    \max_{k \ge 2} |c_k| r^{k-1} < |c_1|.
    \end{equation}
\end{enumerate}
\end{remark}

\subsection{Global Decomposition via Dynamic CRT}
\label{subsec:dcrt}

Let $m = \prod_{i=1}^s p_i^{k_i}$ be the prime factorization of the alphabet size, and set
$R_m = \mathbb{Z}/m\mathbb{Z}$ and $R_i = \mathbb{Z}/p_i^{k_i}\mathbb{Z}$. The classical Chinese Remainder Theorem gives a ring isomorphism
$\Theta: R_m \xrightarrow{\sim} \prod_{i=1}^s R_i$. By \emph{Dynamic} CRT we mean the following: for a congruence-preserving system
$F:R_m\to R_m$, the induced discrete dynamics (equivalently, the functional graph of $F$) factors through $\Theta$ into componentwise dynamics
$F_i:R_i\to R_i$, and this factorization is the one lifted in the product phase space constructed below. 

An arbitrary map $f:R_m\to R_m$ need not respect the CRT decomposition and may entangle the prime-power components (e.g., arbitrary look-up tables that mix residues modulo different primes need not be congruence-preserving).
We therefore restrict attention to maps that are compatible with reduction modulo every divisor of $m$; in particular, this class contains all maps induced by polynomials (Lemma~\ref{lem:poly_cp}).
Such maps are called \emph{congruence-preserving}; the DCRT applies to them, but not to every discrete dynamical system on $\mathbb{Z}/m\mathbb{Z}$.

\begin{definition}[Congruence-Preserving Map]
\label{def:congruence_preserving}
A map $f: R_m \to R_m$ is \emph{congruence-preserving} if for every divisor $d \mid m$ and all $x, y \in R_m$,
\[
x \equiv y \pmod{d} \quad \Rightarrow \quad f(x) \equiv f(y) \pmod{d}.
\]
We denote by $\End_{\mathrm{cp}}(R_m)$ the set of all congruence-preserving endomorphisms of $R_m$.
\end{definition}
\noindent
Equivalently, congruence-preservation is the statement that $f$ factors through every reduction modulo a divisor of $m$.
For each divisor $d\mid m$, let
\[
\pi_d: R_m \longrightarrow R_d=\mathbb{Z}/d\mathbb{Z}
\]
denote the reduction map $\pi_d$. Then $f$ is congruence-preserving if and only if for every $d\mid m$ there exists a (necessarily unique) map
\[
f_d: R_d \longrightarrow R_d
\]
such that
\[
\pi_d\circ f \;=\; f_d\circ \pi_d.
\]
In particular, when $m=\prod_{i=1}^s p_i^{k_i}$ and $\Theta: R_m \xrightarrow{\sim} \prod_{i=1}^s R_i$ is the CRT isomorphism with $R_i=\mathbb{Z}/p_i^{k_i}\mathbb{Z}$, write $\mathrm{pr}_i$ for the projection to the $i$-th factor and set
\[
\rho_i := \mathrm{pr}_i\circ \Theta: R_m \longrightarrow R_i.
\]
If $f$ is congruence-preserving, then for each $i$ the rule
\[
f_i(\rho_i(x)) := \rho_i(f(x))
\]
is well-defined and yields a map $f_i:R_i\to R_i$; moreover,
\[
f \;=\; \Theta^{-1}\circ (f_1\times\cdots\times f_s)\circ \Theta.
\]

\begin{lemma}[Polynomial Maps are Congruence-Preserving]
\label{lem:poly_cp}
Every polynomial $P(X) \in \mathbb{Z}[X]$ induces a congruence-preserving map $f: R_m \to R_m$ via $f(x) = P(x) \bmod m$.
\end{lemma}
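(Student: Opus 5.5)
The argument reduces to the divisibility $(x-y)\mid (P(x)-P(y))$ in $\mathbb{Z}[x,y]$. First I check that $f$ is well defined on $R_m$. If $x\equiv x'\pmod m$, then $P(x)\equiv P(x')\pmod m$, because reduction modulo $m$ is a ring homomorphism $\mathbb{Z}\to R_m$ and $P$ has integer coefficients. So $f(\bar x):=\overline{P(x)}$ does not depend on the representative $x$.

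For the main step, fix a divisor $d\mid m$ and classes $\bar x,\bar y\in R_m$ with $\bar x\equiv\bar y\pmod d$. This congruence is well defined on $R_m$ because $d\mid m$. Choose integer representatives $x,y$, so that $d\mid x-y$. Write $P(X)=\sum_{j=0}^{n} a_j X^j$ with $a_j\in\mathbb{Z}$. Applying the factorization $X^j-Y^j=(X-Y)\sum_{\ell=0}^{j-1}X^\ell Y^{j-1-\ell}$ to each monomial gives
\[
P(x)-P(y)=(x-y)\,Q(x,y),\qquad Q(x,y)=\sum_{j\ge1}a_j\sum_{\ell=0}^{j-1}x^\ell y^{j-1-\ell}\in\mathbb{Z}.
\]
Hence $d\mid P(x)-P(y)$. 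Since $f(\bar x)$ and $f(\bar y)$ are represented by $P(x)$ and $P(y)$ modulo $m$, and $d\mid m$, it follows that $f(\bar x)\equiv f(\bar y)\pmod d$. This is exactly Definition~\ref{def:congruence_preserving}.

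An equivalent route uses the reformulation after the definition. For each $d\mid m$, define $f_d:R_d\to R_d$ by $f_d(\bar z)=\overline{P(z)}\bmod d$. Then $\pi_d\circ f=f_d\circ\pi_d$ holds because both $\pi_d$ and reduction modulo $m$ are ring homomorphisms that commute with evaluation of an integer polynomial.

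I expect no real obstacle here. The only point needing care is bookkeeping with representatives: congruence modulo $d$ must be checked on integer lifts, and $d\mid m$ guarantees the result is independent of the lifts chosen.
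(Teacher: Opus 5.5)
Your proof is correct and follows the paper's argument: both apply the factorization $x^k-y^k=(x-y)\sum_{\ell}x^{\ell}y^{k-1-\ell}$ to each monomial and sum over the coefficients. The only difference is that the paper works directly in $R_m$ with the ideal $dR_m$, while you pass to integer lifts and check that the result does not depend on the lifts chosen; this is a cosmetic difference.
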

\begin{proof}
Fix a divisor $d\mid m$. If $x\equiv y\pmod d$, then $x-y\in dR_m$, hence
\[
x^k-y^k=(x-y)(x^{k-1}+x^{k-2}y+\cdots+y^{k-1})\in dR_m
\]
for every $k\ge 1$, so $x^k\equiv y^k\pmod d$. Writing $P(X)=\sum_{k=0}^n a_k X^k$ with $a_k\in\mathbb{Z}$, we obtain
\[
P(x)-P(y)=\sum_{k=0}^n a_k(x^k-y^k)\in dR_m,
\]
hence $P(x)\equiv P(y)\pmod d$. Therefore the induced map $f(x)=P(x)\bmod m$ is congruence-preserving.
\end{proof}

\begin{definition}[Product of Functional Graphs]
\label{def:graph_product}
For functional graphs $G_{f_1} = (X_1, E_1)$ and $G_{f_2} = (X_2, E_2)$, their \emph{direct product} $G_{f_1} \times G_{f_2}$ is the functional graph with vertex set $X_1 \times X_2$ and edges $((x_1, x_2), (f_1(x_1), f_2(x_2)))$ for all $(x_1, x_2) \in X_1 \times X_2$.
\end{definition}

\begin{lemma}[CRT Assembly Preserves Congruence-Preserving Property]
\label{lem:crt_preserves_cp}
Let $m = \prod_{i=1}^s p_i^{k_i}$ with $(p_i)$ distinct primes, and write $R_m = \mathbb{Z}/m\mathbb{Z}$ and $R_i = \mathbb{Z}/p_i^{k_i}\mathbb{Z}$. Let $\Theta: R_m \xrightarrow{\sim} \prod_{i=1}^s R_i$ be the Chinese remainder isomorphism. Suppose $f_i \in \End_{\mathrm{cp}}(R_i)$ for each $i$, where ``congruence-preserving'' means: for every divisor $d_i \mid p_i^{k_i}$,
\[
x_i \equiv y_i \pmod{d_i} \quad \Longrightarrow \quad f_i(x_i) \equiv f_i(y_i) \pmod{d_i}.
\]
Define $f: R_m \to R_m$ by
\[
f := \Theta^{-1} \circ \left(\prod_{i=1}^s f_i\right) \circ \Theta.
\]
Then $f \in \End_{\mathrm{cp}}(R_m)$, i.e., for every divisor $d \mid m$,
\[
x \equiv y \pmod{d} \quad \Longrightarrow \quad f(x) \equiv f(y) \pmod{d}.
\]
\end{lemma}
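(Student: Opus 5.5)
The plan is to reduce the congruence condition modulo an arbitrary divisor $d\mid m$ to congruence conditions in each prime-power component, and then apply the hypothesis on each $f_i$.

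Fix $d\mid m$ and factor it as $d=\prod_{i=1}^s p_i^{e_i}$ with $0\le e_i\le k_i$. Put $d_i:=p_i^{e_i}$, so that $d_i\mid p_i^{k_i}$.

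The first step is a CRT bookkeeping claim. For $x,y\in R_m$ we have $x\equiv y\pmod d$ if and only if $\rho_i(x)\equiv\rho_i(y)\pmod{d_i}$ for every $i$, where $\rho_i=\mathrm{pr}_i\circ\Theta$ as defined earlier. To prove it, lift $x,y$ to integers $\tilde x,\tilde y$. Since the $d_i$ are pairwise coprime, $d\mid \tilde x-\tilde y$ holds if and only if $d_i\mid \tilde x-\tilde y$ for all $i$. Because $d_i\mid p_i^{k_i}$, reduction modulo $d_i$ factors through $R_i$. Hence $d_i\mid\tilde x-\tilde y$ is equivalent to $\rho_i(x)\equiv\rho_i(y)\pmod{d_i}$, since $\rho_i(x)$ is just $\tilde x \bmod p_i^{k_i}$. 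All congruences here are well defined because $d\mid m$ and $d_i\mid p_i^{k_i}$.

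With the claim in hand, the argument goes as follows. Suppose $x\equiv y\pmod d$. By the forward direction of the claim, $\rho_i(x)\equiv\rho_i(y)\pmod{d_i}$ for each $i$. Since $f_i$ is congruence-preserving on $R_i$, this gives $f_i(\rho_i(x))\equiv f_i(\rho_i(y))\pmod{d_i}$. By the definition of $f$, $\rho_i(f(x))=\mathrm{pr}_i\Theta\Theta^{-1}(\prod_j f_j)(\Theta x)=f_i(\rho_i(x))$, and likewise for $y$. So $\rho_i(f(x))\equiv\rho_i(f(y))\pmod{d_i}$ for all $i$. The reverse direction of the claim then gives $f(x)\equiv f(y)\pmod d$.

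The only real obstacle is stating the claim carefully, in particular making the congruences modulo $d$ and $d_i$ on the quotient rings well defined. Components with $e_i=0$ are harmless: there $d_i=1$ and the condition is vacuous.
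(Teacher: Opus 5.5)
Your proposal is correct and follows the paper's proof. Like the paper, you factor $d=\prod_i p_i^{e_i}$, use $\Theta$ to turn a congruence modulo $d$ into congruences modulo $p_i^{e_i}$ in each component, apply the hypothesis on each $f_i$, and translate back. The only difference is that you prove two steps the paper leaves implicit: the CRT equivalence (via integer lifts) and the identity $\rho_i\circ f=f_i\circ\rho_i$.
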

\begin{proof}
Let $d \mid m$ and write $d = \prod_{i=1}^s p_i^{e_i}$ with $0 \leq e_i \leq k_i$. Under $\Theta$ we have the equivalence
\[
x \equiv y \pmod{d} \quad \Longleftrightarrow \quad x_i \equiv y_i \pmod{p_i^{e_i}} \text{ for all } i.
\]
By hypothesis, each $f_i$ preserves congruences modulo $p_i^{e_i}$, hence $f_i(x_i) \equiv f_i(y_i) \pmod{p_i^{e_i}}$ for all $i$. Applying $\Theta^{-1}$ yields $f(x) \equiv f(y) \pmod{d}$.
\end{proof}

\begin{remark}[Notation Convention]
\label{rem:notation_depth_vs_modulus}
To avoid collision with Witt truncation depth, we use $\ell$ (or $N$) for $W_\ell(\kappa)$ and $m$ for the CRT modulus $R_m = \mathbb{Z}/m\mathbb{Z}$.
\end{remark}

\begin{theorem}[Dynamic CRT]
\label{thm:dcrt}
Let $\End_{\mathrm{cp}}(R_m)$ be the set of congruence-preserving endomorphisms of $R_m$ (see Definition~\ref{def:congruence_preserving}). There is a bijection $\End_{\mathrm{cp}}(R_m) \cong \prod_{i=1}^s \End_{\mathrm{cp}}(R_i)$ induced by $\Theta$. For any such $f$, its functional graph $G_f$ is isomorphic to the direct product of the functional graphs of its components:
\[
G_f \cong G_{f_1} \times G_{f_2} \times \dots \times G_{f_s}.
\]
\end{theorem}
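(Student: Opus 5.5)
I will define two maps and check that they are mutually inverse. The forward map $\Phi:\End_{\mathrm{cp}}(R_m)\to\prod_i\End_{\mathrm{cp}}(R_i)$ sends $f$ to $(f_1,\dots,f_s)$, where $f_i(\rho_i(x)):=\rho_i(f(x))$ as in the discussion after Definition~\ref{def:congruence_preserving}. The backward map $\Psi$ sends $(f_1,\dots,f_s)$ to $\Theta^{-1}\circ(\prod_i f_i)\circ\Theta$.

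\textbf{Well-definedness of $\Phi$.} Note that $\rho_i$ is exactly reduction modulo $p_i^{k_i}$, which is a divisor of $m$. So congruence-preservation of $f$ for $d=p_i^{k_i}$ makes $f_i$ well defined, and $f_i$ is unique because $\rho_i$ is surjective. It remains to show that $f_i\in\End_{\mathrm{cp}}(R_i)$. Take $d_i\mid p_i^{k_i}$ and $x_i\equiv y_i\pmod{d_i}$. Choose lifts $x,y\in R_m$ with $\Theta(x)=(0,\dots,x_i,\dots,0)$ and $\Theta(y)=(0,\dots,y_i,\dots,0)$. Then $x\equiv y\pmod{d_i}$, since $d_i$ divides $m$ and the components at the other primes agree. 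Hence $f(x)\equiv f(y)\pmod{d_i}$, and reducing modulo $d_i\mid p_i^{k_i}$ through $\rho_i$ gives $f_i(x_i)\equiv f_i(y_i)\pmod{d_i}$.

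\textbf{Well-definedness of $\Psi$.} This is exactly Lemma~\ref{lem:crt_preserves_cp}.

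\textbf{Mutual inverses.} For $\Psi\circ\Phi=\mathrm{id}$, use the identity $f=\Theta^{-1}\circ(f_1\times\cdots\times f_s)\circ\Theta$, which holds because $\mathrm{pr}_i(\Theta(f(x)))=\rho_i(f(x))=f_i(\rho_i(x))$ for every $i$. For $\Phi\circ\Psi=\mathrm{id}$, set $f=\Psi(f_1,\dots,f_s)$. Then $\rho_i(f(x))=\mathrm{pr}_i\bigl((\prod_j f_j)(\Theta x)\bigr)=f_i(\rho_i(x))$, so the components recovered from $f$ are the original $f_i$.

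\textbf{Graph isomorphism.} The map $\Theta:R_m\to\prod_i R_i$ is a bijection on vertices. By Definition~\ref{def:graph_product}, an edge $(x,f(x))$ of $G_f$ is sent to $(\Theta x,\Theta f(x))=(\Theta x,(f_i(\rho_i x))_i)$, which is an edge of $G_{f_1}\times\cdots\times G_{f_s}$. Both graphs are functional graphs on vertex sets of the same size, with one edge out of each vertex, so this edge map is a bijection and $\Theta$ is an isomorphism of functional graphs. For $s>2$ I will take the iterated direct product of Definition~\ref{def:graph_product}, which is associative up to the canonical identification of product sets.

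\textbf{Main obstacle.} The only step needing care is showing that the components $f_i$ are themselves congruence-preserving on $R_i$, since this requires constructing lifts through CRT. Everything else is bookkeeping with $\Theta$.
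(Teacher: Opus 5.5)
Your proposal is correct and follows the same route as the paper: the components $f_i$ are obtained by reduction modulo $p_i^{k_i}$, the converse direction is Lemma~\ref{lem:crt_preserves_cp}, and the graph isomorphism is the edge correspondence under $\Theta$. Your explicit lift argument, showing that each $f_i$ is congruence-preserving modulo every $d_i \mid p_i^{k_i}$, supplies a step the paper only asserts (and loosely attributes to the single case $d=p_i^{k_i}$); setting the other CRT components to zero is harmless but not needed, since congruence modulo a power of $p_i$ depends only on the $i$-th component.
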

\begin{proof}
Let $f \in \End_{\mathrm{cp}}(R_m)$. For each $i$, the map $f$ induces a well-defined map $f_i: R_i \to R_i$ given by $f_i(x \pmod{p_i^{k_i}}) = f(x) \pmod{p_i^{k_i}}$. The congruence-preserving property (with $d = p_i^{k_i}$) ensures that $f_i$ does not depend on the choice of representative $x \in R_m$, and that $f_i$ is itself congruence-preserving on $R_i$. The Chinese Remainder Theorem ensures that $f$ is uniquely determined by the tuple $(f_1, \dots, f_s)$. 

Conversely, by Lemma~\ref{lem:crt_preserves_cp}, any tuple $(f_1, \dots, f_s)$ of congruence-preserving maps on the components defines a congruence-preserving map on $R_m$ via $\Theta^{-1}$.

For the graph isomorphism, since $f(x)$ corresponds to $(f_1(x_1), \dots, f_s(x_s))$ in product coordinates under $\Theta$, the transition $(x, f(x))$ in $G_f$ corresponds to the product of transitions $(x_i, f_i(x_i))$ in each $G_{f_i}$. This establishes the isomorphism $G_f \cong G_{f_1} \times \dots \times G_{f_s}$ as defined in Definition~\ref{def:graph_product}.
\end{proof}

\begin{example}[Non-Congruence-Preserving Map]
\label{ex:non_cp}
Not every map $f: R_m \to R_m$ is congruence-preserving. For instance, consider $m = 6 = 2 \cdot 3$ and identify
$R_6=\mathbb{Z}/6\mathbb{Z}$ with the set of standard representatives $\{0,1,2,3,4,5\}$.
Define $f: \mathbb{Z}/6\mathbb{Z} \to \mathbb{Z}/6\mathbb{Z}$ by
\[
f(x)=
\begin{cases}
x + 1 \ (\mathrm{mod}\ 6) & \text{if } x\in\{0,1,2\},\\
x + 2 \ (\mathrm{mod}\ 6) & \text{if } x\in\{3,4,5\}.
\end{cases}
\]
Then $0 \equiv 3 \pmod{3}$ but $f(0)=1$ and $f(3)=5$ satisfy $1 \not\equiv 5 \pmod{3}$.
Hence $f$ is not congruence-preserving modulo $3$, and Theorem~\ref{thm:dcrt} does not apply: the functional graph $G_f$
cannot be decomposed as a product of graphs over $\mathbb{Z}/2\mathbb{Z}$ and $\mathbb{Z}/3\mathbb{Z}$ in the sense of
Definition~\ref{def:graph_product}.

Concretely, the functional graph $G_f$ contains the $4$-cycle
\[
1 \longrightarrow 2 \longrightarrow 3 \longrightarrow 5 \longrightarrow 1,
\]
with transient vertices $0 \to 1$ and $4 \to 0$.
This illustrates that congruence-preservation is a necessary hypothesis for the product decomposition.
\end{example}

\begin{remark}[Scope of the DCRT]
\label{rem:dcrt_scope}
The class of congruence-preserving maps is broad: it includes all polynomially induced maps (Lemma~\ref{lem:poly_cp}) and many rule-based systems used in practice. Quantifying the prevalence of this class within $\End(R_m)$ (e.g., as a fraction of all endomorphisms or in terms of natural measures) is left for future work.
\end{remark}

\begin{lemma}[Cylinders as reduction fibers]
\label{lem:cylinders_as_fibers}
Let $\cO$ be a complete discrete valuation ring with uniformizer $p$, and fix $k \ge 1$. Let $R = \cO/p^k\cO$ and let $\pi^{(k)}: \cO \to R$ be the quotient map (reduction at depth $k$). For each $x \in R$, define the cylinder (closed ball) $\mathcal{B}_x := (\pi^{(k)})^{-1}(x)$. Let $\phi: \cO \to \cO$ be any map and $F: R \to R$ any map. Then the following are equivalent:
\begin{enumerate}
    \item \textbf{Geometric inclusion:} $\phi(\mathcal{B}_x) \subseteq \mathcal{B}_{F(x)}$ for all $x \in R$.
    \item \textbf{Algebraic commutativity:} $\pi^{(k)} \circ \phi = F \circ \pi^{(k)}$ as maps $\cO \to R$.
\end{enumerate}
\end{lemma}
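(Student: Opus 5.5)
The plan is to prove the equivalence by a direct element-chase, using only that the cylinders $\mathcal{B}_x=(\pi^{(k)})^{-1}(x)$ are the fibers of the surjection $\pi^{(k)}$. This mirrors Lemma~\ref{lem:strong_commutation}, which is the special case $\cO=\cO_K$ with $\cO_K/p^k\cO_K\cong W_k(\kappa)$. No analytic input is needed: $\phi$ and $F$ are arbitrary set maps, and completeness of $\cO$ plays no role.

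\textbf{(1)$\Rightarrow$(2).} Fix $z\in\cO$ and put $x=\pi^{(k)}(z)$, so that $z\in\mathcal{B}_x$. By (1), $\phi(z)\in\mathcal{B}_{F(x)}=(\pi^{(k)})^{-1}(F(x))$. Unwinding the fiber gives $\pi^{(k)}(\phi(z))=F(\pi^{(k)}(z))$. Since $z$ was arbitrary, this is (2).

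\textbf{(2)$\Rightarrow$(1).} Fix $x\in R$ and $z\in\mathcal{B}_x$, so $\pi^{(k)}(z)=x$. Then (2) gives $\pi^{(k)}(\phi(z))=F(x)$, hence $\phi(z)\in\mathcal{B}_{F(x)}$. Letting $z$ range over $\mathcal{B}_x$ yields $\phi(\mathcal{B}_x)\subseteq\mathcal{B}_{F(x)}$.

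There is no real obstacle. The one point to address is that the statement calls $\mathcal{B}_x$ a closed ball, so I would add a brief justification that each fiber is one. If $z_0\in\cO$ is any lift of $x$, then $\mathcal{B}_x=z_0+p^k\cO=\{z\in\cO:|z-z_0|\le|p|^k\}$, because $p$ is a uniformizer. By Lemma~\ref{lem:ball_coincidence} this ball does not depend on the choice of lift. This is the identification that lets Lemma~\ref{lem:cylinder_partition} and the ball semantics of Section~\ref{sec:semantics} be read algebraically, but the equivalence itself does not use it.
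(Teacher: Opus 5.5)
Your proposal is correct and follows the paper's proof essentially verbatim: both directions are the same element-chase through the fibers $\mathcal{B}_x=(\pi^{(k)})^{-1}(x)$, using nothing beyond the definition of a fiber. Your extra remark that each fiber equals $z_0+p^k\cO$, a closed ball of radius $|p|^k$ independent of the chosen lift, is accurate; the paper does not spell it out, and, as you say, the equivalence does not depend on it.
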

\begin{proof}
(2) $\Rightarrow$ (1): If $z \in \mathcal{B}_x$, then $\pi^{(k)}(z) = x$. By (2), $\pi^{(k)}(\phi(z)) = F(\pi^{(k)}(z)) = F(x)$, so $\phi(z) \in (\pi^{(k)})^{-1}(F(x)) = \mathcal{B}_{F(x)}$.

(1) $\Rightarrow$ (2): Fix $a \in \cO$ and set $x = \pi^{(k)}(a)$. Then $a \in \mathcal{B}_x$, so by (1), $\phi(a) \in \mathcal{B}_{F(x)}$, i.e.\ $\pi^{(k)}(\phi(a)) = F(x) = F(\pi^{(k)}(a))$.
\end{proof}
The Witt-ring formulation of this equivalence is Lemma~\ref{lem:strong_commutation} (stated earlier in this section), when $\cO \simeq W(\kappa)$ and cylinders are the truncation fibers $\mathbb{B}_N(x)$.

\medskip
By Lemma~\ref{lem:cylinders_as_fibers}, interpretation with inclusion (ball images contained in target cylinders) is equivalent to commutativity of the reduction diagram whenever cylinders are defined as the fibers of a depth-$k$ quotient map. The following proposition formalizes the construction of a global interpreter for composite alphabets as a product of local interpreters.

\begin{proposition}[CRT product realization]
\label{prop:crt_product}
Let $m = \prod_{i=1}^s p_i^{k_i}$ be the prime factorization of the modulus. Set
$R_m = \mathbb{Z}/m\mathbb{Z}$ and $R_i = \mathbb{Z}/p_i^{k_i}\mathbb{Z}$. Let
$\Theta: R_m \xrightarrow{\sim} \prod_{i=1}^s R_i$ be the Chinese Remainder isomorphism.
Assume $F: R_m \to R_m$ is congruence-preserving in the sense of Theorem~\ref{thm:dcrt}, so that it
decomposes componentwise into maps $F_i: R_i \to R_i$ via
\[
F \;=\; \Theta^{-1} \circ (F_1 \times \cdots \times F_s) \circ \Theta .
\]
For each $i$, take $K_i = \mathbb{Q}_{p_i}$ and $\cO_i = \mathbb{Z}_{p_i}$ (so the ramification index $e_i=1$); then the quotient map $\pi_i^{(k_i)}:\mathbb{Z}_{p_i}\to R_i=\mathbb{Z}/p_i^{k_i}\mathbb{Z}$ induces the identification $\mathbb{Z}_{p_i}/p_i^{k_i}\mathbb{Z}_{p_i}\cong R_i$. (The ramified case is beyond the scope here; see Remark~\ref{rem:dcrt_ramified_scope}.)
Let $\pi_i^{(k_i)}: \mathbb{Z}_{p_i} \to R_i$ denote the depth-$k_i$ reduction map. (No analytic hypothesis is needed for the commutative square; continuity/analyticity matters only for the non-Archimedean dynamics developed elsewhere.) Suppose there exist maps (rational or analytic) $\phi_i:\mathbb{Z}_{p_i}\to\mathbb{Z}_{p_i}$
such that
\[
\pi_i^{(k_i)} \circ \phi_i \;=\; F_i \circ \pi_i^{(k_i)}
\quad\text{on }\mathbb{Z}_{p_i}.
\]
Define the \emph{product phase space} $\boldsymbol{\Omega} := \prod_{i=1}^s \mathbb{Z}_{p_i}$, the product map
$\Phi := \prod_{i=1}^s \phi_i : \boldsymbol{\Omega} \to \boldsymbol{\Omega}$, and the induced global reduction
\[
\Pi := \Theta^{-1} \circ (\pi_1^{(k_1)} \times \cdots \times \pi_s^{(k_s)}): \boldsymbol{\Omega} \to R_m .
\]
Then $\Pi \circ \Phi = F \circ \Pi$; that is, $\Phi$ interprets $F$ with inclusion on $\boldsymbol{\Omega}$.
\end{proposition}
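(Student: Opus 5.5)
The proof is a direct diagram chase, and the plan is to verify the identity $\Pi\circ\Phi = F\circ\Pi$ pointwise on $\boldsymbol{\Omega}$. Fix $\omega=(z_1,\dots,z_s)\in\boldsymbol{\Omega}$. By definition of the product map, $\Phi(\omega)=(\phi_1(z_1),\dots,\phi_s(z_s))$. Applying the componentwise reductions gives the tuple $\bigl(\pi_1^{(k_1)}(\phi_1(z_1)),\dots,\pi_s^{(k_s)}(\phi_s(z_s))\bigr)$. The hypothesis $\pi_i^{(k_i)}\circ\phi_i = F_i\circ\pi_i^{(k_i)}$ rewrites this tuple as $\bigl(F_1(\pi_1^{(k_1)}(z_1)),\dots,F_s(\pi_s^{(k_s)}(z_s))\bigr)$. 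In other words,
\[
(\pi_1^{(k_1)}\times\cdots\times\pi_s^{(k_s)})\circ\Phi=(F_1\times\cdots\times F_s)\circ(\pi_1^{(k_1)}\times\cdots\times\pi_s^{(k_s)}).
\]

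Next I apply $\Theta^{-1}$ to both sides. The left side becomes $\Pi\circ\Phi$ by definition of $\Pi$. For the right side, I insert $\Theta\circ\Theta^{-1}=\mathrm{id}$ between $F_1\times\cdots\times F_s$ and the product of reductions. This gives
\[
\Theta^{-1}\circ(F_1\times\cdots\times F_s)\circ\Theta\circ\Theta^{-1}\circ(\pi_1^{(k_1)}\times\cdots\times\pi_s^{(k_s)}) = F\circ\Pi,
\]
where the last equality uses the DCRT decomposition $F=\Theta^{-1}\circ(F_1\times\cdots\times F_s)\circ\Theta$ from Theorem~\ref{thm:dcrt}. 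This proves the commutation.

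Finally, I would translate the commutation into the geometric language of interpretation with inclusion, exactly as in Lemma~\ref{lem:cylinders_as_fibers}. The fibers $\Pi^{-1}(x)$ for $x\in R_m$ are the product cylinders $\prod_i(\pi_i^{(k_i)})^{-1}(\rho_i(x))$. The same two-line argument as in that lemma, applied with $\Pi$ in place of $\pi^{(k)}$, shows that $\Pi\circ\Phi=F\circ\Pi$ is equivalent to $\Phi(\Pi^{-1}(x))\subseteq\Pi^{-1}(F(x))$ for every $x\in R_m$.

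There is no real obstacle. The only point needing care is bookkeeping: the identity $F=\Theta^{-1}\circ(F_1\times\cdots\times F_s)\circ\Theta$ is exactly what the congruence-preserving hypothesis provides. As the statement notes, no analytic hypothesis on the $\phi_i$ enters the argument.
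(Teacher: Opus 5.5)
Your proof is correct and follows the same pointwise diagram chase as the paper: apply the local commutativity $\pi_i^{(k_i)}\circ\phi_i=F_i\circ\pi_i^{(k_i)}$ componentwise, then use $F=\Theta^{-1}\circ(F_1\times\cdots\times F_s)\circ\Theta$. Your insertion of $\Theta\circ\Theta^{-1}$ only makes explicit a step the paper leaves implicit, and your translation into fiber inclusions $\Phi(\Pi^{-1}(x))\subseteq\Pi^{-1}(F(x))$ is a harmless addition that the paper takes as the meaning of ``interprets with inclusion'' via Lemma~\ref{lem:cylinders_as_fibers}.
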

\begin{proof}
Let $\boldsymbol{z} = (z_1, \ldots, z_s) \in \boldsymbol{\Omega}$. Then
\[
\Pi(\Phi(\boldsymbol{z})) = \Theta^{-1}\bigl( \pi_1^{(k_1)}(\phi_1(z_1)), \ldots, \pi_s^{(k_s)}(\phi_s(z_s)) \bigr).
\]
By the local commutativity $\pi_i^{(k_i)} \circ \phi_i = F_i \circ \pi_i^{(k_i)}$, this equals $\Theta^{-1}(F_1(\pi_1^{(k_1)}(z_1)), \ldots, F_s(\pi_s^{(k_s)}(z_s)))$. Since $F = \Theta^{-1} \circ (\prod_i F_i) \circ \Theta$, the right-hand side is $F(\Pi(\boldsymbol{z}))$.
\end{proof}

\begin{remark}[No single local field/ring for composite modulus]
\label{rem:no_single_local_field_for_composite}
If $m$ has at least two distinct primes, then $\mathbb{Z}/m\mathbb{Z}$ is not a local ring, whereas every quotient $\mathcal{O}/\mathfrak{m}^k$ of a local ring $(\mathcal{O},\mathfrak{m})$ is local. Hence $\mathbb{Z}/m\mathbb{Z}$ cannot be of the form $\mathcal{O}_K/\mathfrak{m}_K^k$. For the product decomposition of the discrete dynamics (DCRT), the corresponding phase space is $\prod_i \mathbb{Z}_{p_i}$ (Proposition~\ref{prop:crt_product}).
\end{remark}

\begin{remark}[Ramified extensions: out of scope]
\label{rem:dcrt_ramified_scope}
If $K_i/\mathbb{Q}_{p_i}$ is ramified ($e_i>1$), then $p_i$ is not a uniformizer: the maximal ideal is $\mathfrak{m}_i=(\pi_i)$ with $|\pi_i|=p_i^{-1/e_i}$, and the quotients $\mathcal{O}_{K_i}/\mathfrak{m}_i^{k}$ are not canonically $\mathbb{Z}/p_i^{k}\mathbb{Z}$. In particular, $|\mathcal{O}_{K_i}/\mathfrak{m}_i^{k}|=|\kappa_i|^{k}=p_i^{f_i k}$, which already differs from $|\mathbb{Z}/p_i^{k}\mathbb{Z}|=p_i^{k}$ when $f_i>1$; even when $f_i=1$ the identification is non-canonical and the relevant reduction is modulo $\mathfrak{m}_i^{k}$ rather than $p_i^{k}$. Theorem~\ref{thm:dcrt} is formulated over $\mathbb{Z}/m\mathbb{Z}$ via the classical CRT and relies on its decomposition; extending to ramified local quotients would require a different formulation (e.g.\ appropriate local quotients and a modified decomposition hypothesis). That is beyond the scope of this paper.
\end{remark}

\subsection{Two orthogonal decompositions: horizontal vs.\ vertical}
\label{subsec:horizontal_vertical}
Our constructions separate two independent sources of complexity for finite dynamics.

\textbf{Horizontal (CRT) factorization} concerns splitting a composite alphabet size $m = \prod_i p_i^{k_i}$ into prime-power components. Proposition~\ref{prop:crt_product} shows that, at the level of interpreters with inclusion (Lemma~\ref{lem:cylinders_as_fibers}), the ambient analytic object for this decomposition is the product $\prod_i \mathbb{Z}_{p_i}$, and the global discrete map is induced componentwise. The examples in Section~\ref{subsec:dcrt_examples} illustrate this at the discrete level.

\textbf{Vertical (depth/degree) refinement} concerns increasing resolution within a fixed prime. Passing from $p$ to $p^k$ corresponds to deeper congruence cylinders defined by $\pi^{(k)}$ (Lemma~\ref{lem:cylinders_as_fibers}), while replacing $\F_p$ by $\F_{p^f}$ corresponds to unramified degree-$f$ lifts modeled by Witt vectors $W(\F_{p^f})$ (Section~\ref{sec:composite_alphabets}; \cite{HazewinkelWitt, SerreLocalFields}). Exact periodic cycles lift along the tower under a non-degeneracy condition (Theorem~\ref{thm:hensel_exact_cycle}, Corollary~\ref{cor:hensel_tower}); see Corollary~\ref{rem:prop_tower} and Appendix~\ref{app:profinite_addendum}. These two axes are orthogonal: CRT governs prime splitting; Witt theory and depth-$k$ reduction govern $p$-adic precision.

\subsection{\texorpdfstring{Pro-$p$}{Pro-p} towers and illustrative examples}
\label{subsec:prop_towers_hensel}
Refinement along a fixed prime via $(\mathbb{Z}/p^n\mathbb{Z})_{n\ge 1}$ yields compatible towers and their limit in $\Zp$. The link between towers and 1-Lipschitz dynamics is summarized in the following corollary; for the lifting of \emph{exact} periodic cycles (Hensel) we refer to Section~\ref{subsec:hensel_cycles} (Theorem~\ref{thm:hensel_exact_cycle}, Corollary~\ref{cor:hensel_tower}, Remarks~\ref{rem:period_divides}--\ref{rem:hensel_degenerate}).

\begin{corollary}[Compatible towers $\Leftrightarrow$ 1-Lipschitz on $\mathbb{Z}_p$]
\label{rem:prop_tower}
Let $R_n = \mathbb{Z}/p^n\mathbb{Z}$ and $\pi_{n+1,n}: R_{n+1} \to R_n$ the reduction map. A family of maps $f_n: R_n \to R_n$ is \emph{compatible} if and only if $\pi_{n+1,n} \circ f_{n+1} = f_n \circ \pi_{n+1,n}$ for all $n \ge 1$. By the universal property of $\mathbb{Z}_p \cong \varprojlim R_n$, a compatible tower induces a unique map $f: \mathbb{Z}_p \to \mathbb{Z}_p$ with $f(x) \bmod p^n = f_n(x \bmod p^n)$ for all $n$; in the $p$-adic setting $f$ is 1-Lipschitz, and conversely every 1-Lipschitz $f$ gives a compatible tower (see~\cite{AnashinKhrennikov, BenedettoDynamics}). Extending a given $f_n$ to a compatible $f_{n+1}$ is always possible but highly non-unique.
\end{corollary}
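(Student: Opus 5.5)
We prove the three assertions of Corollary~\ref{rem:prop_tower} in turn: the inverse-limit construction, the equivalence with $1$-Lipschitz maps, and the existence and non-uniqueness of extensions.

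\textbf{Step 1: compatibility.} Compatibility over all pairs $M\le N$ reduces to consecutive levels. Indeed, $\pi_{N,M}=\pi_{M+1,M}\circ\cdots\circ\pi_{N,N-1}$, and the consecutive squares stack by induction. This shows that the consecutive condition in the statement agrees with the general definition of a compatible tower.

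\textbf{Step 2: the limit map.} Given a compatible tower, define $f(x)$ for $x\in\Zp$ as the element $(f_n(x\bmod p^n))_n$ of $\varprojlim R_n\cong\Zp$. Compatibility says exactly that this sequence is coherent. Uniqueness of $f$ with $f(x)\bmod p^n=f_n(x\bmod p^n)$ holds because an element of $\Zp$ is determined by all its reductions. This is Theorem~\ref{thm:inverse-limit} with $\kappa=\Fp$.

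\textbf{Step 3: $f$ is $1$-Lipschitz.} Take $x\ne y$ with $|x-y|=p^{-n}$, so $x\equiv y\pmod{p^n}$. Then $f(x)\equiv f_n(x\bmod p^n)=f_n(y\bmod p^n)\equiv f(y)\pmod{p^n}$, hence $|f(x)-f(y)|\le p^{-n}=|x-y|$. Equivalently, $f$ maps each depth-$n$ cylinder into a depth-$n$ cylinder; by Lemma~\ref{lem:cylinders_as_fibers} this is interpretation with inclusion at every depth.

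\textbf{Step 4: the converse.} Let $f$ be $1$-Lipschitz. Then $x\equiv y\pmod{p^n}$ implies $f(x)\equiv f(y)\pmod{p^n}$, so $f_n(\bar x):=f(x)\bmod p^n$ is well defined on $R_n$. Compatibility of the resulting tower follows because reducing $f(x)\bmod p^{n+1}$ modulo $p^n$ gives $f(x)\bmod p^n$.

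\textbf{Step 5: extension and non-uniqueness.} This is the only point needing a small construction, and also the only place the statement is slightly ambiguous. "Extending $f_n$" means: for $a\in R_{n+1}$, choose $f_{n+1}(a)$ to be any lift of $f_n(\pi_{n+1,n}(a))$.

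For existence, take $f_{n+1}(a)$ to be the canonical representative of $f_n(\pi_{n+1,n}(a))$ in $\{0,\dots,p^n-1\}\subset R_{n+1}$. Since $\pi_{n+1,n}$ is surjective, the choices can be made independently for each $a$, and compatibility holds by construction.

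For non-uniqueness, count the choices. Each of the $p^{n+1}$ elements has exactly $p$ admissible lifts. Hence there are $p^{p^{n+1}}$ extensions, which is more than one as soon as $p\ge2$.

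Iterating this argument shows that any $f_n$ extends to a full compatible tower, hence to a $1$-Lipschitz map on $\Zp$. This is the observation that selecting \emph{analytic} extensions is a separate problem.
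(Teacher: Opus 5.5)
Your argument for everything the corollary asserts is correct and is essentially the paper's own. The paper (Appendix~\ref{app:route1}) gets $f$ from the universal property of $\varprojlim R_n$ and derives the $1$-Lipschitz bound by the same congruence argument you use. It only cites the literature for the converse and for the extension claim, and you verify both directly, including the count $p^{p^{n+1}}$ of admissible $f_{n+1}$.

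One correction: your closing sentence is false as stated. Iterating the lifting step only produces $f_{n+1},f_{n+2},\dots$. It does not produce the lower levels $f_1,\dots,f_{n-1}$, and an arbitrary $f_n$ need not descend to them. For example, take $p=2$, $n=2$, and any $f_2$ on $\mathbb{Z}/4\mathbb{Z}$ with $f_2(0)=0$ and $f_2(2)=1$. Every limit map $f$ obtained by lifting upward then satisfies $f(0)\equiv 0$ and $f(2)\equiv 1 \pmod 4$. Hence $|f(0)-f(2)|_2=1>\tfrac12=|0-2|_2$, so $f$ is not $1$-Lipschitz.

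The correct statement is that $f_n$ belongs to a full compatible tower (equivalently, is the level-$n$ shadow of a $1$-Lipschitz map) if and only if $x\equiv y \pmod{p^m}$ implies $f_n(x)\equiv f_n(y)\pmod{p^m}$ for all $m<n$. That is, $f_n$ must be congruence-preserving in the sense of Definition~\ref{def:congruence_preserving} with modulus $p^n$. In that case $f_1,\dots,f_{n-1}$ are forced, and your upward extension completes the tower.

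The corollary itself only asserts the step from $f_n$ to $f_{n+1}$, so this does not affect your proof. The closing remark should either be dropped or carry that hypothesis.
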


Figure~\ref{fig:xplus1-tower} illustrates the counterexample $f(x)=x+1$: the period doubles at each level ($2$-cycle, $4$-cycle, $8$-cycle) although the reduction maps are surjective.

\begin{figure}[htbp]
\centering
\includegraphics[width=\linewidth,height=0.20\textheight,keepaspectratio]{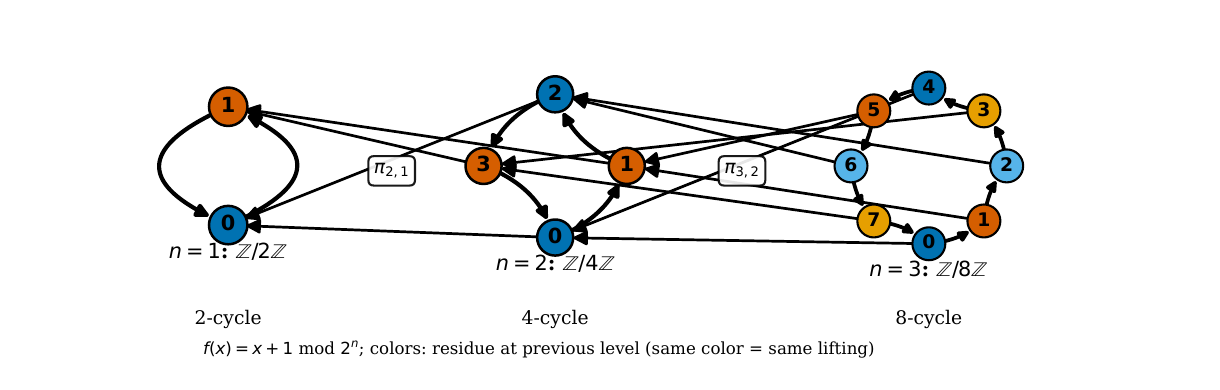}
\caption{Counterexample $f(x)=x+1$ mod $2^n$ for $n=1,2,3$: period grows (2-cycle, 4-cycle, 8-cycle). Each edge is a single arrow; the 2-cycle is drawn as two curved arcs with the same curvature so they do not overlap. The multiplier $\lambda=f'(x)=1$ (parabolic), so Hensel's non-degeneracy fails and cycle length is not preserved. Vertical: reduction (down).}
\label{fig:xplus1-tower}
\end{figure}

A concrete illustration with $p=2$ and $f(x)=x^2+1$ is given in Figure~\ref{fig:pro2-tower}. The polynomial $f(x)=x^2+1$ induces a compatible tower $f_n: \mathbb{Z}/2^n\mathbb{Z} \to \mathbb{Z}/2^n\mathbb{Z}$ automatically (each $f_n$ is the reduction of the same polynomial), so the tower is the discrete shadow of the map $\phi(z)=z^2+1$ on $\mathbb{Z}_2$. The figure shows four levels ($n=1,2,3,4$): each panel is the functional graph at that modulus, with arrows $x \mapsto f(x)$. Vertex colours are consistent across levels: the same colour indicates the same residue class (so reduction goes from level $n$ to $n-1$ by matching colours; lifting is the reverse). Reading the tower \emph{upward}, this encodes the \textbf{liftings} of the discrete dynamics to the unique 1-Lipschitz map $f$ on $\mathbb{Z}_2$ that induces the tower (Corollary~\ref{rem:prop_tower}). Cycles and transients appear at each level; under the non-degeneracy condition of Theorem~\ref{thm:hensel_exact_cycle}, periodic points (e.g.\ the 2-cycle visible at low levels) lift coherently to $\mathbb{Z}_2$ via Hensel.

\begin{figure}[htbp]
\centering
\includegraphics[width=\linewidth,height=0.26\textheight,keepaspectratio]{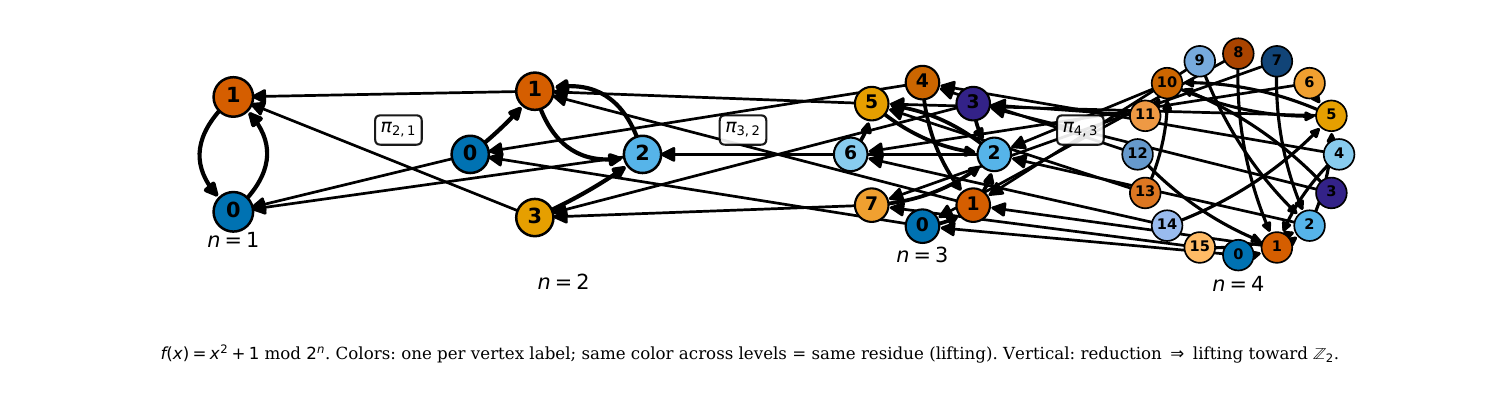}
\caption{Four compatible levels of the functional graphs of $f(x)=x^2+1$ modulo $2^n$ for $n=1,2,3,4$. Each panel shows the digraph at that modulus (arrows: $x\mapsto f(x)$). \textbf{Colours:} one colour per vertex label; the same colour in different panels indicates the same residue class (lifting: a vertex at level $n$ projects to the vertex of the same colour at level $n-1$). Reading the tower \emph{upward}, this encodes lifting toward the limit map on $\mathbb{Z}_2$; commutativity $\pi_{n+1,n}\circ f_{n+1}=f_n\circ\pi_{n+1,n}$ holds. Under the non-degeneracy condition of Theorem~\ref{thm:hensel_exact_cycle}, periodic points lift to $\mathbb{Z}_2$.}
\label{fig:pro2-tower}
\end{figure}

The same picture extends to a general \emph{profinite} (or Stone) setting. Let $X = \varprojlim X_n$ be an inverse limit of finite sets $X_n$ with surjective bonding maps $\pi_{n+1,n}: X_{n+1} \to X_n$. A family of maps $f_n: X_n \to X_n$ is \emph{compatible} if $\pi_{n+1,n} \circ f_{n+1} = f_n \circ \pi_{n+1,n}$ for all $n$; then there is a unique map $f: X \to X$ such that the restriction to ``level $n$'' coincides with $f_n$ (i.e.\ the projective system of dynamical systems $(X_n, f_n)$ lifts to a dynamics $f$ on the limit $X$). In $\mathbb{Z}_p$, each level $n$ is encoded by a partition into balls of radius $p^{-n}$, and compatibility becomes geometric nesting of balls. When a \emph{single} rational map $\phi$ (e.g.\ on $\mathbb{Z}_p$ or $\cO_K$) interprets $f_n$ at each finite level $n$ on the corresponding cylinders (Lemma~\ref{lem:strong_commutation}), $\phi$ induces the limit map $f$ and one says that $\phi$ \emph{interprets $f$ on the limit}. If instead one has only a tower of maps $\phi_n$ (each interpreting the finite dynamics at level $n$), passing to a limiting interpreter requires a \emph{compatibility selection principle}; producing such compatible $\phi_n$ is an interesting direction and is not guaranteed by the Runge/$\varepsilon$-gluing existence at each level. Appendix~\ref{app:profinite_addendum} formalizes the passage from ball-level to pointwise (radius-zero) interpretation.

\subsection{Reduction Theory and the Reduction-Logic Link}
\label{subsec:reduction}

\paragraph{Convention (good reduction = strict).}
Here, \textbf{``good reduction''} always means \emph{strict} good reduction (degree-preserving reduction). See Remark~\ref{rem:strict_good_reduction_convention} and Example~\ref{ex:non_strict_good_reduction}; in parts of the literature the terminology differs.

The use of Witt vectors allows us to connect our interpreters to the theory of \emph{reduction} of rational maps~\cite{BenedettoReduction}. This provides a formal bridge between the continuous dynamics on $\cO_K$ and the discrete dynamics on the residue field $\kappa$.

Recall that a rational map $\phi \in K(z)$ has \emph{good reduction} (here: strict, i.e.\ degree-preserving) if it extends to a morphism $\phi: \PP^1_{\cO_K} \to \PP^1_{\cO_K}$ of the projective line over $\cO_K$, which induces a well-defined map $\overline{\phi}: \PP^1_{\kappa} \to \PP^1_{\kappa}$ on the residue field. This notion is standard in arithmetic dynamics~\cite{BenedettoReduction, SilvermanAD} and arithmetic geometry~\cite{PoonenRCA}; in the sense of~\cite{BenedettoReduction}, good reduction implies uniform non-expanding behavior on the integral locus, and we use it here as a selection principle.

\begin{lemma}[Reduction-Logic Link for Depth 1]
\label{lem:reduction_link}
Let $\phi \in K(z)$ be a rational map with \emph{strict} good reduction (i.e., the reduction $\overline{\phi}$ has the same degree as $\phi$), and let $F: \kappa \to \kappa$ be a discrete rule. Extend $F$ to $\mathbb{P}^1(\kappa)$ by setting $F(\infty) = \infty$. Then $\phi$ interprets $F$ with inclusion at depth $N=1$ (i.e., on the residue field $\kappa$) if and only if the reduction $\overline{\phi}$ coincides with $F$ as maps on $\PP^1(\kappa)$.
\end{lemma}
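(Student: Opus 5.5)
The plan is to reduce the statement to a pointwise identity between two maps $\PP^1(\kappa)\to\PP^1(\kappa)$ by means of the reduction-fiber formalism. At depth $N=1$ the state cylinders are the residue discs $D(\bar a)=\{z\in\PP^1(K): \overline{z}=\bar a\}$ for $\bar a\in\PP^1(\kappa)$. These are the fibers of the reduction map $\mathrm{red}:\PP^1(K)\to\PP^1(\kappa)$. For $\bar a\in\kappa$ the fiber $D(\bar a)$ is the open unit ball about a lift of $\bar a$, and $D(\infty)=\{|z|>1\}\cup\{\infty\}$. On the integral part this is the fiber description $\pi_1^{-1}(\bar a)$ of Remark~\ref{rem:field_base}, extended by the disc at infinity. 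First I will record the projective analogue of Lemma~\ref{lem:cylinders_as_fibers}, whose proof goes through verbatim for any partition into fibers: $\phi$ interprets $F$ with inclusion at depth $1$ if and only if $\mathrm{red}\circ\phi = F\circ \mathrm{red}$ on $\PP^1(K)$. This is the content of ``$\phi(D(\bar a))\subseteq D(F(\bar a))$ for all $\bar a\in\PP^1(\kappa)$''.

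The key input is the standard fact about good reduction: if $\phi=[P:Q]$ with $P,Q\in\cO_K[z]$ normalized, and the resultant $\mathrm{Res}(P,Q)$ is a unit (equivalently, $\deg\overline\phi=\deg\phi$), then reduction commutes with evaluation. That is, $\overline{\phi(z)}=\overline\phi(\overline z)$ for every $z\in\PP^1(K)$ (see~\cite{BenedettoReduction, SilvermanAD}). I would prove or cite it as follows. Take homogeneous coordinates $z=[x:y]$ with $\max(|x|,|y|)=1$. The unit-resultant condition gives $\max(|P(x,y)|,|Q(x,y)|)=1$, so $[P(x,y):Q(x,y)]$ is already a normalized representative. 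Reducing coordinatewise then yields $\overline\phi([\bar x:\bar y])$. Strictness is exactly what guarantees that no common zero of $\overline P,\overline Q$ occurs, so $\overline\phi$ is a well-defined morphism of the same degree and the identity holds everywhere, including on $D(\infty)$.

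With this in hand, both directions are short. For ($\Leftarrow$), suppose $\overline\phi=F$ on $\PP^1(\kappa)$. Then $\mathrm{red}(\phi(z))=\overline\phi(\mathrm{red}(z))=F(\mathrm{red}(z))$, hence $\phi(D(\bar a))\subseteq D(F(\bar a))$. For ($\Rightarrow$), inclusion gives $\mathrm{red}\circ\phi=F\circ\mathrm{red}$. Fix $\bar a\in\PP^1(\kappa)$ and pick any lift $a\in D(\bar a)$, which exists because $\mathrm{red}$ is surjective. Then $F(\bar a)=\overline{\phi(a)}=\overline\phi(\bar a)$, so $F=\overline\phi$. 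The convention $F(\infty)=\infty$ is then forced. On $\infty$ the condition becomes $\overline\phi(\infty)=\infty$, i.e.\ $\deg\overline P>\deg\overline Q$, or $\overline\phi$ polynomial. I will note that the statement implicitly restricts attention to such $\phi$ (otherwise both sides fail simultaneously, which is consistent).

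The main obstacle is the commuting identity $\overline{\phi(z)}=\overline\phi(\overline z)$ at points where naive coefficientwise reduction breaks down. These are the residue classes where $\overline P$ and $\overline Q$ would share a zero, and the disc at infinity. This is precisely where non-strict good reduction fails, as Example~\ref{ex:non_strict_good_reduction} illustrates. To handle it I will work in homogeneous normalized coordinates and invoke the unit-resultant characterization. The rest is the formal fiber argument of Lemma~\ref{lem:cylinders_as_fibers}.
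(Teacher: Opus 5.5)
Your route differs from the paper's. The paper works only on $\mathcal{O}_K$ via Lemma~\ref{lem:strong_commutation}. It asserts that strict good reduction gives $\phi(\mathcal{O}_K)\subseteq\mathcal{O}_K$, and then says the coincidence $\overline{\phi}=F$ on $\kappa$ ``extends'' to $\mathbb{P}^1(\kappa)$ because $F(\infty)=\infty$. You instead partition all of $\mathbb{P}^1(K)$ into residue discs, including $D(\infty)$, and prove the commuting identity $\overline{\phi(z)}=\overline{\phi}(\overline{z})$ from the unit resultant. That identity is exactly what the paper leaves unproved. Your set-up also avoids the paper's false intermediate claim: $\phi(z)=1/z$ has strict good reduction but sends $0\in\mathcal{O}_K$ to $\infty$. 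In your ($\Leftarrow$) direction, integrality on $\mathcal{O}_K$ comes for free from $F(\kappa)\subseteq\kappa$. As a proof of the \emph{projective} statement ``$\phi(D(\bar a))\subseteq D(F(\bar a))$ for all $\bar a\in\mathbb{P}^1(\kappa)$ iff $\overline{\phi}=F$'', your argument is complete.

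The gap is that this is not the hypothesis of the lemma. In the paper, interpretation with inclusion at depth $1$ concerns only the cylinders $\pi_1^{-1}(x)\subseteq\mathcal{O}_K$ with $x\in W_1(\kappa)=\kappa$. There is no cylinder at $\infty$, and $F(\infty)=\infty$ is only a convention for comparing with $\overline{\phi}$. Your ($\Rightarrow$) step at $\bar a=\infty$ uses $\phi(D(\infty))\subseteq D(\infty)$, which that hypothesis does not supply. Your remark that otherwise ``both sides fail simultaneously'' holds only in your projective reading.

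With the paper's hypothesis the implication is actually false for finite $\kappa$. Over $K=\mathbb{Q}_2$, take $\phi(z)=1/(z^2+z+1)=[y^2:x^2+xy+y^2]$, which has unit resultant. Since $z^2+z+1$ is odd on $\mathbb{Z}_2$, $\phi(\mathbb{Z}_2)\subseteq 1+2\mathbb{Z}_2$, so $\phi$ interprets $F\equiv 1$ on $\mathbb{F}_2$ with inclusion. Yet $\overline{\phi}(\infty)=0\neq\infty=F(\infty)$. The paper's last sentence (``the coincidence extends to all of $\mathbb{P}^1(\kappa)$'') hides exactly this.

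There are three ways to repair it:
\begin{enumerate}
\item State and prove the projective version, which is what you effectively do.
\item Keep the affine hypothesis and conclude only $\overline{\phi}|_{\kappa}=F$, adding $\overline{\phi}(\infty)=\infty$ as a separate hypothesis.
\item Assume $\kappa$ algebraically closed (e.g.\ $\kappa=\overline{\mathbb{F}}_p$ for $K_0$). Then $\overline{\phi}(\kappa)\subseteq\kappa$ forces $\overline{\phi}$ to be a polynomial, so $\overline{\phi}(\infty)=\infty$ whenever $\deg\phi\ge 1$.
\end{enumerate}
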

\begin{proof}
By Lemma~\ref{lem:strong_commutation}, $\phi$ interprets $F$ with inclusion at depth 1 if and only if $\pi_1 \circ \phi = F \circ \pi_1$ on $\cO_K$, where $\pi_1: \cO_K \to \kappa$ is the reduction map modulo $p$. Since $\phi$ has strict good reduction, it preserves the valuation ring $\cO_K$ and induces $\overline{\phi}$ on $\kappa$ with the same degree. The condition $\pi_1 \circ \phi = F \circ \pi_1$ is equivalent to $\overline{\phi} = F$ on $\kappa$ (the affine line). Since $\overline{\phi}$ is a rational map on $\PP^1(\kappa)$ and $F(\infty) = \infty$ by definition, the coincidence extends to all of $\PP^1(\kappa)$.
\end{proof}

\begin{remark}[Convention on Good Reduction]
\label{rem:strict_good_reduction_convention}
We use ``good reduction'' to mean \emph{strict good reduction} (degree-preserving reduction), consistent with Benedetto~\cite{BenedettoReduction}. In parts of the literature, ``good reduction'' is used for any morphism extension (even when the degree drops on the special fiber); we follow the stricter convention so that the equivalence $\overline{\phi} = F$ in Lemma~\ref{lem:reduction_link} holds and the discrete shadow coincides with $F$ and captures the combinatorial structure. For maps whose reduction has lower degree, interpretation with inclusion fails (Example~\ref{ex:non_strict_good_reduction}).
\end{remark}

\begin{example}[Non-Strict Good Reduction (in Some Conventions) Fails Interpretation with Inclusion]
\label{ex:non_strict_good_reduction}
In part of the literature, a rational map $\phi$ is said to have \emph{good reduction} if it extends to a morphism on $\PP^1_{\cO_K}$, even when the reduced map has degree strictly less than $\deg(\phi)$. Under \emph{our} convention (good reduction = strict), such a map does not have ``good reduction.'' For instance, let $K = \Qp$ with $p \geq 3$ and $\phi(z) = z^2/p$. Then $\phi$ extends to a morphism, but the reduction $\overline{\phi}(z) = 0$ has degree 0, so $\phi$ does not have strict good reduction.

Now consider a discrete rule $F: \F_p \to \F_p$ defined by $F(0) = 0$ and $F(x) = x^2$ for $x \neq 0$. The functional graph $G_F$ has a fixed point at $0$ and a 2-cycle for quadratic residues. However, since $\overline{\phi}(z) = 0$ is constant, it maps all of $\PP^1(\F_p)$ to a single point, collapsing the entire functional graph $G_F$ to a single vertex. This violates the interpretation-with-inclusion condition: for any $x \in \F_p$ with $F(x) \neq 0$, we have $\overline{\phi}(x) = 0 \neq F(x)$, so $\phi$ cannot interpret $F$ with inclusion at depth 1.

This example illustrates why strict good reduction (degree preservation) is necessary for interpretation with inclusion: without it, the reduced map can collapse multiple discrete states into a single point, destroying the combinatorial structure of the functional graph.
\end{example}

\begin{lemma}[Good Reduction Induces Compatible Tower]
\label{lem:good_reduction_tower}
Let $\phi \in K(z)$ be a rational map with good reduction. Then $\phi$ induces a compatible tower of maps $\{\phi_N: W_N(\kappa) \to W_N(\kappa)\}_{N \ge 1}$ such that for all $N \ge 1$, the reduction $\overline{\phi}_N$ is well-defined and $\phi$ interprets $\overline{\phi}_N$ with inclusion at depth $N$.
\end{lemma}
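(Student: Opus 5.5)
The plan is to obtain the tower from the fact that a map with (strict) good reduction preserves the valuation ring and is $1$-Lipschitz on it. Lemma~\ref{lem:cylinders_as_fibers} and Corollary~\ref{rem:prop_tower} then convert this into a compatible tower together with interpretation with inclusion. I would work over $\cO_K$ with $K$ unramified, so that $\cO_K/p^N\cO_K\cong W_N(\kappa)$ by \eqref{eq:trunc}.

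\textbf{Step 1 (normal form).} Write $\phi=P/Q$ in normalized form: $P,Q\in\cO_K[z]$, with at least one coefficient a unit. Good reduction (strict, per the convention of Section~\ref{subsec:reduction}) means that the reductions $\overline P,\overline Q$ have no common zero in $\PP^1(\bar\kappa)$ and that $\deg\overline\phi=\deg\phi$.

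\textbf{Step 2 (integrality on $\cO_K$).} For each $z\in\cO_K$, I claim that either $|Q(z)|=1$, or else $\phi(z)$ lies in the residue disk at $\infty$. The two cases are as follows.
\begin{itemize}
\item If $\overline Q(\bar z)\neq0$, then $\phi(z)\in\cO_K$ and $\pi_1(\phi(z))=\overline\phi(\bar z)$.
\item If $\overline Q(\bar z)=0$, then $\overline\phi(\bar z)=\infty$.
\end{itemize}
This is the main obstacle. The statement implicitly requires $\phi(\cO_K)\subseteq\cO_K$ so that $\phi_N$ is a self-map of $W_N(\kappa)$. I would handle this as in Lemma~\ref{lem:reduction_link}: either restrict to the residue classes where $\overline\phi$ is finite, or assume (as the proof of Lemma~\ref{lem:reduction_link} tacitly does) that $\overline\phi$ maps $\mathbb{A}^1(\kappa)$ into itself, i.e.\ $\overline Q$ has no zeros in $\kappa$. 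Under that hypothesis we have $|Q(z)|=1$ on all of $\cO_K$. I would state this hypothesis explicitly.

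\textbf{Step 3 ($1$-Lipschitz).} For $z,w\in\cO_K$ we have
\[
\phi(z)-\phi(w)=\frac{P(z)Q(w)-P(w)Q(z)}{Q(z)Q(w)}.
\]
The numerator is divisible by $z-w$ in $\cO_K[z,w]$, since it is a polynomial with integral coefficients vanishing on the diagonal. The denominator is a unit by Step 2. Hence $|\phi(z)-\phi(w)|\le|z-w|$. In particular $z\equiv w\pmod{p^N}$ implies $\phi(z)\equiv\phi(w)\pmod{p^N}$.

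\textbf{Step 4 (tower and interpretation).} Define $\phi_N(x):=\pi_N(\phi(\tilde x))$ for any lift $\tilde x\in\pi_N^{-1}(x)$; Step 3 shows this is well defined. Compatibility $\tau_{N,M}\circ\phi_N=\phi_M\circ\tau_{N,M}$ follows because $\tau_{N,M}\circ\pi_N=\pi_M$. By construction we have $\pi_N\circ\phi=\phi_N\circ\pi_N$, and Lemma~\ref{lem:strong_commutation} converts this into interpretation with inclusion at every depth $N$. Setting $\overline\phi_N:=\phi_N$ (with $\overline\phi_1=\overline\phi|_\kappa$, consistent with Lemma~\ref{lem:reduction_link}) completes the argument. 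Theorem~\ref{thm:inverse-limit} then identifies the limit map with $\phi|_{\cO_K}$.
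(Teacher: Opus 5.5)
Your proof is correct, and it follows a genuinely different route from the paper's.

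\textbf{The paper's route.} The paper argues scheme-theoretically. Good reduction means $\phi$ extends to a morphism $\PP^1_{\cO_K}\to\PP^1_{\cO_K}$. Base change to $\cO_K/p^N\cO_K$ then gives maps $\phi_N$ on $\PP^1(\cO_K/p^N\cO_K)$, and compatibility is just functoriality of base change along $\cO_K/p^N\to\cO_K/p^M$.

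\textbf{Your route.} You work affinely with a normalized pair $(P,Q)$. You prove that $\phi$ is $1$-Lipschitz on $\cO_K$ because $P(z)Q(w)-P(w)Q(z)$ is divisible by $z-w$ in $\cO_K[z,w]$. Compatibility then follows from $\tau_{N,M}\circ\pi_N=\pi_M$.

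\textbf{What each buys.} The paper's argument is shorter and needs no extra hypothesis. However, what it actually produces are self-maps of $\PP^1(W_N(\kappa))$, not of $W_N(\kappa)$. Its final appeal to Lemma~\ref{lem:strong_commutation} presupposes $\phi(\cO_K)\subseteq\cO_K$, and this can fail under good reduction. For example, $\phi(z)=1/z$ has good reduction but sends every point of $p\Zp$ outside $\Zp$ (to $\infty$ in the case of $0$). So your hypothesis that $\overline Q$ has no zero in $\kappa$ is exactly what the statement requires as written, with target $W_N(\kappa)$. Your Step~2 shows that, under good reduction, this hypothesis is \emph{equivalent} to $\phi(\cO_K)\subseteq\cO_K$. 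With it, your argument proves the affine statement as claimed and uses Lemma~\ref{lem:strong_commutation} legitimately.

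One further observation: once that hypothesis is imposed, your Steps~3--4 never use good reduction. Any normalized $P/Q$ with $\overline Q$ nonvanishing on $\kappa$ yields the tower. An example is $pz^2+z$, which has bad reduction (its reduction has degree $1$). So in your version, good reduction only makes the hypothesis necessary. The tower itself comes from integrality plus a unit denominator on $\cO_K$.
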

\begin{proof}
Since $\phi$ has good reduction, it extends to a morphism $\phi: \PP^1_{\cO_K} \to \PP^1_{\cO_K}$ of the projective line over $\cO_K$. For each $N \ge 1$, this induces a well-defined map $\phi_N: \PP^1(\cO_K/p^N\cO_K) \to \PP^1(\cO_K/p^N\cO_K)$ via the truncation/quotient map $\pi_N:\cO_K\to \cO_K/p^N\cO_K$ and the standard identification $\cO_K/p^N\cO_K \cong W_N(\kappa)$ (when $K$ is unramified). The compatibility follows from the functoriality of reduction modulo $p^N$: for $M \le N$, the truncation map $\tau_{N,M}: W_N(\kappa) \to W_M(\kappa)$ commutes with $\phi_N$ and $\phi_M$ because $\phi$ is defined globally on $\PP^1_{\cO_K}$. By Lemma~\ref{lem:strong_commutation}, $\phi$ interprets $\overline{\phi}_N$ with inclusion at depth $N$ for all $N \ge 1$.
\end{proof}

\begin{observation}[Reduction as Selection Principle]
\label{obs:reduction_link}
Lemma~\ref{lem:reduction_link} establishes that for maps with good reduction, the ``discrete shadow'' at the residue field level coincides with $F$ (Lemma~\ref{lem:reduction_link}), not merely an approximation. Lemma~\ref{lem:good_reduction_tower} shows that good reduction automatically produces a compatible tower, connecting the reduction theory with the profinite setting of Section~\ref{subsec:towers-profinite}. The arithmetic invariants of such maps, such as their Néron--Ogg--Shafarevich criteria and arboreal representations~\cite{PerezBuendia2025Arboreal}, provide invariants that can guide the selection of optimal interpreters (see Section~\ref{subsec:reduction-selection}).
\end{observation}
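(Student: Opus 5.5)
The Observation is essentially a consequence of the two preceding lemmas, so my plan is to assemble it from them rather than argue from scratch. I would split it into three claims and treat each in turn:
\begin{itemize}
\item[(a)] exactness of the depth-one shadow;
\item[(b)] automatic production of a compatible tower;
\item[(c)] the interpretive remark that arithmetic invariants can guide selection.
\end{itemize}
Only (a) and (b) carry mathematical content. Claim (c) is a heuristic and I would state it as such, citing~\cite{PerezBuendia2025Arboreal} and Section~\ref{subsec:reduction-selection} rather than proving anything.

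For (a), I would fix $\phi\in K(z)$ with strict good reduction and a rule $F:\kappa\to\kappa$ extended by $F(\infty)=\infty$. Lemma~\ref{lem:reduction_link} gives the equivalence
\[
\text{interpretation with inclusion at depth }1 \iff \overline{\phi}=F \text{ on } \PP^1(\kappa).
\]
The point to stress is that this is an \emph{equality} of maps, not an approximation. To make it precise, I would go through Lemma~\ref{lem:strong_commutation} (equivalently Lemma~\ref{lem:cylinders_as_fibers} with $k=1$) and rewrite inclusion as the commutation $\pi_1\circ\phi=F\circ\pi_1$ on $\cO_K$. Surjectivity of $\pi_1$ then forces the induced map on $\kappa$ to be literally $F$.

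For (b), I would invoke Lemma~\ref{lem:good_reduction_tower} directly. Good reduction gives a morphism $\PP^1_{\cO_K}\to\PP^1_{\cO_K}$, and base change to $\cO_K/p^N\cO_K\cong W_N(\kappa)$ yields maps $\phi_N$. These are compatible because reduction modulo $p^N$ is functorial. Theorem~\ref{thm:inverse-limit} then identifies the inverse limit of the $\phi_N$ with $\phi|_{\cO_K}$. This links the tower to Section~\ref{subsec:towers-profinite}: one map $\phi$ interprets every level with inclusion.

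The main obstacle is making sure that strictness of good reduction is genuinely used, and in the right place. I need to check two things: that $\phi(\cO_K)\subseteq\cO_K$ so that $\pi_N\circ\phi$ makes sense, and that the degree-preserving hypothesis rules out the collapse of Example~\ref{ex:non_strict_good_reduction}. In that example the reduction drops degree and the shadow is constant, so it cannot equal a nonconstant $F$. I would handle this by citing~\cite{BenedettoReduction}: the coefficients can be normalized to be integral with unit resultant, which gives the integrality and makes the reduction well defined and degree-preserving. I would treat this as the one nontrivial input and take it as standard rather than reprove it.
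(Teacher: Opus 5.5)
Your decomposition matches the paper: the Observation has no separate proof. It simply restates Lemma~\ref{lem:reduction_link} and Lemma~\ref{lem:good_reduction_tower}, and its last sentence is a heuristic pointer to \cite{PerezBuendia2025Arboreal} and Section~\ref{subsec:reduction-selection}, exactly as you split it.

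\textbf{The gap.} The problem is the step you single out as the one nontrivial input. Writing $\phi=[P:Q]$ with integral coefficients and unit resultant gives a morphism $\PP^1_{\cO_K}\to\PP^1_{\cO_K}$. That means reduction commutes with $\phi$ on $\PP^1$, and residue classes go into residue classes, \emph{including the class of $\infty$}. It does \emph{not} give $\phi(\cO_K)\subseteq\cO_K$. For example, $\phi(z)=1/z^2=[Y^2:X^2]$ has unit resultant and degree-preserving reduction, yet $\phi(0)=\infty$ and $\phi(p)=p^{-2}\notin\cO_K$. For such $\phi$:
\begin{itemize}
\item[(i)] $\pi_1\circ\phi$ and $\pi_N\circ\phi$ are not defined on $\cO_K$, so Lemma~\ref{lem:strong_commutation} cannot be invoked;
\item[(ii)] the base-changed maps on $\PP^1(\cO_K/p^N\cO_K)$ do not restrict to maps $W_N(\kappa)\to W_N(\kappa)$.
\end{itemize}

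\textbf{What closes it.} You need the extra hypothesis that $\overline{\phi}$ has no poles at points of $\kappa$, i.e.\ $\overline{\phi}(\kappa)\subseteq\kappa$. For $\kappa=\overline{\mathbb{F}}_p$ this means $\overline{\phi}$ is a polynomial; a typical instance is $\phi\in\cO_K[z]$ with unit leading coefficient. With this hypothesis:
\begin{itemize}
\item[(i)] commutation with reduction gives $\phi(\cO_K)\subseteq\cO_K$;
\item[(ii)] good reduction makes $\phi$ $1$-Lipschitz on $\cO_K$, since the chordal and standard metrics agree there;
\item[(iii)] your (a) and (b) then go through, including identifying the inverse limit with $\phi|_{\cO_K}$ via Theorem~\ref{thm:inverse-limit}.
\end{itemize}
Be aware that the paper makes the same unjustified step. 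It appears in the proof of Lemma~\ref{lem:reduction_link} and in Proposition~\ref{prop:baseline_selection}(1), where integrality is called ``the definition of good reduction.'' So citing those lemmas inherits the gap rather than closing it.

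\textbf{A smaller point in (a).} Your surjectivity argument gives $\overline{\phi}=F$ on $\kappa$, which is all the Observation needs. The further claim you take from Lemma~\ref{lem:reduction_link}, equality on $\PP^1(\kappa)$ with $F(\infty)=\infty$, is automatic when $\kappa$ is algebraically closed, but fails for finite $\kappa$. Over $\Qp$, take $\phi(z)=1/(z^p-z+1)=[Y^p:X^p-XY^{p-1}+Y^p]$. It has strict good reduction and maps $\Zp$ into $1+p\Zp$, so it interprets $F\equiv 1$ with inclusion at depth $1$. Yet $\overline{\phi}(\infty)=0\neq\infty$.
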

 \section{Closing the Loop: Analytical Synthesis for Discrete Systems}
\label{sec:closing_the_loop}

\begin{sloppypar}
This section returns from the construction of $p$-adic interpreters to the original combinatorial system. The $p$-adic analytical setting provides a rigorous way to classify discrete states and to select ``better'' models based on stability.
\end{sloppypar}

\subsection{Analytical Invariants vs. Combinatorial States}

In a purely combinatorial functional graph $G_F$, a fixed point $x$ (with $F(x)=x$) is represented by the self-loop $(x,x)$ (convention of Section~\ref{sec:encoding}). However, an interpreter $\phi$ lifts this state to a ball $B_x$ where $\phi(B_x) \subseteq B_x$. The $p$-adic multiplier $\lambda = \phi'(\alpha)$ at a fixed point $\alpha \in B_x$ provides a hierarchy of stability that the graph $G_F$ cannot detect. The classification of multipliers is standard in $p$-adic dynamics~\cite{BenedettoDynamics, RiveraLetelier2003, SilvermanAD}. In the specific setting of ball-based interpreters for \emph{finite} functional graphs, the use of multipliers as analytical signatures for discrete states provides a quantitative refinement of the combinatorial structure:

\begin{enumerate}[leftmargin=2em]
    \item \textbf{Contractive fixed points ($|\lambda| < 1$, also called ``attracting'' in the classical dynamics literature):} These represent ``robust'' discrete states. If the system is perturbed by a noise $\varepsilon$ satisfying the robust exactness bounds (Theorem~\ref{thm:robust_exact}), the state $x$ remains an attractor. In the context of regulatory networks, this corresponds to a stable biological phenotype that persists under continuous variations.
    \item \textbf{Expansive fixed points ($|\lambda| > 1$, also called ``repelling''):} These represent ``sensitive'' or ``transient'' states. Even if $x$ is a fixed point in the discrete model, any continuous infinitesimal perturbation will push the trajectory out of the cylinder $B_x$. Combinatorially, these states might be part of an attractor cycle, but analytically they are unstable.
    \item \textbf{Indifferent fixed points ($|\lambda| = 1$):} These represent ``marginal'' stability, where the local behavior is an isometry.
\end{enumerate}

By fixing an interpreter $\phi$ and reading its local multipliers (equivalently, the ratios $\sigma_i$ of image radii to target radii as in Remark~\ref{rem:image_ball}), one obtains a well-defined local stability type at each state cylinder: contractive, indifferent, or expansive. In this way the same discrete functional graph can be enriched by analytic invariants coming from the $p$-adic dynamics of $\phi$, connecting the combinatorial model with classical non-Archimedean notions (Fatou/Julia decomposition and Siegel disks); see~\cite{Hsiao2013, RiveraLetelier2003}.

\begin{observation}[Runge Approximation and Good Reduction]
\label{obs:runge_good_reduction}
The rational interpreters constructed via Runge approximation (Theorem~\ref{thm:runge}) do not automatically have good reduction. In general, a rational function $\phi$ obtained by approximating a piecewise affine model $\psi$ may have poles or critical points that migrate into the state cylinders, violating the good reduction condition. To ensure good reduction, one must either:
\begin{enumerate}
    \item Restrict the approximation to a subspace of rational functions with coefficients in $\cO_K$ and unit leading coefficients (ensuring the reduced map has the same degree), or
    \item Apply a post-processing step to adjust $\phi$ so that it extends to a morphism on $\PP^1_{\cO_K}$ with degree-preserving reduction.
\end{enumerate}
The selection principle based on good reduction (Section~\ref{subsec:reduction-selection}) thus requires an additional constraint beyond the Runge approximation itself.
\end{observation}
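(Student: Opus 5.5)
The observation makes two claims. First, Runge interpreters need not have good reduction. Second, either of the two listed remedies restores it. I would prove the negative claim with an explicit construction and the positive claim by a direct check against the definition of strict good reduction.

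\textbf{Negative part.} Work over $K=\Qp$ with depth $N=1$. Take the ball system given by the cylinders $B_x=\Ball{x}{p^{-1}}$, $x\in\{0,\dots,p-1\}$, and $\tau=\mathrm{id}$. The piecewise affine model of Theorem~\ref{thm:analytic_existence} is then $\psi(z)=z$, and $\phi(z)=z$ is an exact interpreter with good reduction.

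Now perturb it. Fix a point $c\in\Cp$ that lies outside $U=\bigcup_x B_x$ but is close to $U$, say $|c-a|=p^{-1/2}$ for some center $a$. Set
\[
\phi_\delta(z)=z+\frac{\delta}{z-c}.
\]
On $U$ we have $|z-c|\ge p^{-1}$, so $\norm{\phi_\delta-\psi}_U\le|\delta|p$. Choosing $|\delta|$ small makes this below the threshold of Theorem~\ref{thm:robust_exact}, so $\phi_\delta$ is again an exact interpreter.

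On the other hand, $\phi_\delta$ has degree $2$ and a pole at $c$. After normalizing, its reduction collapses to degree $1$, since the terms involving $\delta$ vanish modulo $\mathfrak m$. Hence $\phi_\delta$ does not have strict good reduction, and Runge density alone cannot force good reduction. The reduction-degree computation is the step I expect to be delicate. Concretely, I would write $\phi_\delta=[z(z-c)+\delta : z-c]$ in normalized homogeneous coordinates and compute its resultant, which has valuation $v(\delta)>0$.

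\textbf{Positive part, remedy (1).} If $\phi=P/Q$ with $P,Q\in\cO_K[z]$ and the resultant $\mathrm{Res}(P,Q)$ is a unit, then strict good reduction holds by definition. A unit leading coefficient guarantees $\deg\overline{P}=\deg P$. The genuine additional condition, coprimality of the reductions, is exactly unit resultant. So remedy (1) should be phrased as a restriction to the open set $\{|\mathrm{Res}|=1\}$.

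\textbf{Positive part, remedy (2).} Here I would note that strict good reduction is preserved under $\varepsilon$-perturbations of the coefficients of size less than $1$. This holds because the resultant is a polynomial in the coefficients with integral coefficients. Thus any post-processing that lands within such a perturbation of a good-reduction interpreter, such as the identity in the example above, keeps good reduction. This justifies the closing sentence of the observation: the good-reduction constraint is an additional open condition imposed on top of sup-norm closeness.
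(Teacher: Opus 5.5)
The paper gives no argument for this observation; it is stated as a heuristic remark. Your proposal is therefore a genuinely different and more substantive route: an explicit family $\phi_\delta=z+\delta/(z-c)$ of exact interpreters, together with a resultant computation.

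\textbf{The negative part is correct.} The pair $(z^2-cz+\delta,\ z-c)$ is normalized, it reduces to $z(z-\overline{c})/(z-\overline{c})=z$, and its resultant is $\pm\delta$.

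There is one slip. Over $K=\Qp$ there is no $c$ with $|c-a|=p^{-1/2}$. Either work over $\Cp$, as the paper does for Runge, or take $c=1/p$; then the normalized pair $(pz^2-z+p\delta,\ pz-1)$ again reduces to $z$.

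\textbf{Your example also repairs the statement in two places.}
\begin{enumerate}
\item Poles cannot enter the cylinders themselves (Corollary~\ref{cor:pole_free_with_control}(1)). They can only enter the residue discs containing the cylinders, which is exactly where your $c$ lies. Critical points are irrelevant: $z^2$ has good reduction and a critical point in $B_0$.
\item $\phi_\delta$ has integral coefficients and a monic numerator and denominator, yet its reduction has degree $1$. So it refutes the parenthetical claim in remedy~(1). Your unit-resultant condition is a genuine correction, not a rephrasing.
\end{enumerate}

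\textbf{Two caveats on the positive part.}

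First, for remedies (1) and (2) you only check the tautological direction: maps in the restricted class have good reduction. You do not show that the restricted class still contains interpreters, and in general it does not. Good reduction forces non-expansion on the integral locus (as recalled in Section~\ref{subsec:reduction}). Hence no such map sends $\Ball{0}{p^{-2}}$ onto $\Ball{0}{p^{-1}}$, even though Corollary~\ref{cor:exact_via_runge_interpolation} produces rational exact interpreters for that ball system. The observation survives only because it is a necessity claim (``one must''). Your phrasing ``either remedy restores it'' overstates it.

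Second, the openness you prove holds in the coefficient topology at fixed degree. It does not hold in the sup-norm topology on $U$, which is the topology Runge works in. There your construction generalizes. Take any approximant $\phi=P/Q$ and any $c\notin U$ with $Q(c)\neq 0$. Then $\phi+\delta/(z-c)$ is still an approximant for small $|\delta|$, since $\operatorname{dist}(c,U)>0$. Its normalized reduction acquires a common factor: $z-\overline{c}$ if $|c|\le 1$, and a common zero at $\infty$ otherwise. So it has bad reduction. Hence good reduction has empty interior in the Runge topology. This is a stronger form of ``not automatically,'' and it is the precise sense in which good reduction is an additional constraint.
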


\subsection{Optimal Interpreters and \texorpdfstring{$\varepsilon$}{epsilon}-Precision}
\begin{sloppypar}
The existence results above are non-unique: many rational maps $\phi$ may interpret the same discrete rule. One is thus led to a selection problem: given a target analytic model $\psi\in\mathcal{O}(U)$ on a finite union of balls $U=\bigsqcup_i B_i$, one may seek a rational $\phi$ that approximates $\psi$ while preserving the intended discrete resolution.
\end{sloppypar}

A typical approach is to apply Runge-type approximation (Theorem~\ref{thm:runge}) to produce $\phi$ with $\|\phi-\psi\|_U<\varepsilon$. The parameter $\varepsilon$ controls which combinatorial features are stable under approximation:
\begin{itemize}
    \item If $\varepsilon$ is chosen small relative to the target radii (in particular $\varepsilon<\min_i t_{\tau(i)}$, and within the regime of Theorem~\ref{thm:robust_exact}), then approximation preserves the intended inclusion/exactness constraints at the level of balls, so that $\phi$ realizes the same transition rule on the prescribed grid.
    \item If $\varepsilon$ is larger, then some cylinders may become expansive relative to their targets (cf. the $\sigma_i$-criterion in Remark~\ref{rem:image_ball}), and the induced behavior may no longer match the original discrete resolution.
\end{itemize}

Accordingly, one may consider trade-offs between approximation accuracy, analytic constraints (e.g. pole-freeness on neighborhoods of $U$), and structural complexity of $\phi$ (such as $\deg(\phi)$), without claiming effectivity or a constructive optimization procedure.

\begin{remark}[Open problem: $\varepsilon$ vs.\ degree]
Given a piecewise affine interpreter $\psi$ on a finite union of balls $U$ and $\varepsilon > 0$, Theorem~\ref{thm:runge} guarantees the existence of a rational $\phi$ with $\|\phi - \psi\|_U < \varepsilon$, but does not provide a degree bound. Whether there exists $\phi$ with $\|\phi - \psi\|_U < \varepsilon$ and $\deg(\phi)$ bounded by a function of the number of balls and $\varepsilon$ (or a counterexample showing that no such bound exists) is open.
\end{remark}

\subsection{Good Reduction as a Selection Heuristic (Discussion)}
\label{subsec:reduction-selection}

\begin{sloppypar}
The results proved here (including Theorem~\ref{thm:robust_exact} and the reduction-logic statements at depth $N=1$) are established within the present manuscript. References~\cite{PBNC2025, PerezBuendia2025Arboreal} are cited only for complementary viewpoints (e.g. alternative constructions and arboreal/Galois characterizations) and for outlook.
\end{sloppypar}

\begin{sloppypar}
The reduction-logic link proven here is at \emph{depth $N=1$} (residue field): Lemmas~\ref{lem:reduction_link} and~\ref{lem:good_reduction_tower} characterize when a rational map with good reduction interprets a discrete rule with inclusion on the residue field and induces a compatible tower. The proofs are given in Section~\ref{subsec:reduction} (Lemmas~\ref{lem:reduction_link} and~\ref{lem:good_reduction_tower}). Deeper arithmetic characterizations (Galois/arboreal) are developed in~\cite{PerezBuendia2025Arboreal}. Building on that baseline, we discuss good reduction as a selection heuristic for choosing optimal interpreters.
\end{sloppypar}

Let $K/\Qp$ be a non-archimedean local field with ring of integers $\cO_K$, and let $\phi\in K(z)$ be a rational map of degree $\ge 2$. Recall that $\phi$ has \emph{strict good reduction} if it admits a representation with coefficients in $\cO_K$ whose reduction $\widetilde{\phi}\in \kappa(z)$ has the same degree (equivalently, $\phi$ extends to a morphism on $\PP^1_{\cO_K}$ of the same degree on the special fiber). Good reduction is well-known in the literature to preclude the appearance of new wild ramification in dynamical towers and to enforce a uniform non-expanding behavior on the integral locus~\cite{BenedettoReduction, BenedettoDynamics}; we use this here as context, not as a new claim. The theory of good reduction for morphisms of varieties is developed systematically in~\cite{PoonenRCA}, where it is shown that good reduction ensures well-behaved specialization properties.

\begin{proposition}[Baseline Selection Criterion]
\label{prop:baseline_selection}
Let $\phi \in K(z)$ be a rational map with \emph{strict} good reduction, and let $F: \kappa \to \kappa$ be a discrete rule. Then:
\begin{enumerate}
    \item $\phi(\mathcal{O}_K) \subseteq \mathcal{O}_K$ (i.e., $\phi$ preserves the valuation ring).
    \item $\phi$ interprets $F$ with inclusion at depth $N=1$ if and only if the reduction $\overline{\phi}$ coincides with $F$ as maps on $\mathbb{P}^1(\kappa)$ (by Lemma~\ref{lem:reduction_link}).
    \item $\phi$ induces a compatible tower $\{\phi_N: W_N(\kappa) \to W_N(\kappa)\}_{N \ge 1}$ (by Lemma~\ref{lem:good_reduction_tower}).
\end{enumerate}
In particular, by Lemma~\ref{lem:reduction_link}, the discrete shadow at the residue field level coincides with $F$, not merely an approximation.
\end{proposition}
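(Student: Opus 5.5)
The proposition is essentially a packaging of results already proved in Section~\ref{subsec:reduction}, so the plan is to verify the three items in order, with only item (1) requiring a genuine (if short) argument.

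For item (1), I will fix a normalized representation $\phi=[P:Q]$ with $P,Q\in\cO_K[z]$ and at least one coefficient a unit. Strict good reduction means the reductions $\overline{P},\overline{Q}$ have no common zero in $\PP^1(\overline{\kappa})$, equivalently the resultant $\mathrm{Res}(P,Q)$ is a unit. Given $z\in\cO_K$, I will argue by contradiction. If $|Q(z)|<\max(|P(z)|,|Q(z)|)$ failed to have $|Q(z)|=1$, then either both $|P(z)|,|Q(z)|<1$ or $|Q(z)|<|P(z)|\le 1$. In the first case $\bar z$ is a common root of $\overline{P},\overline{Q}$, which is excluded.

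The second case needs more care. The cleaner route is to use the standard fact~\cite{BenedettoReduction,BenedettoDynamics} that good reduction gives $\overline{\phi(z)}=\overline{\phi}(\bar z)$ for all $z\in\PP^1(K)$. Thus $\phi(z)\in\cO_K$ exactly when $\overline{\phi}(\bar z)\neq\infty$.

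This is the main obstacle: strict good reduction alone does not prevent $\overline{\phi}$ from having a pole at some $\bar z\in\kappa$ (for example $\phi(z)=1/z$). So item (1) must be read with the convention of Lemma~\ref{lem:reduction_link}, namely that $\overline{\phi}$ restricts to a self-map of the affine line $\kappa$, which is implicitly assumed when $F(\infty)=\infty$ and $\overline{\phi}=F$ on $\kappa$. Alternatively, one can restrict to the polynomial-type case of Observation~\ref{obs:runge_good_reduction}. I would first try to deduce (1) from the fact that $\phi$ is being compared with a rule $F:\kappa\to\kappa$. If that fails, I would state explicitly that (1) is understood under the hypothesis that $\overline{\phi}$ has no poles in $\kappa$, as in the proof of Lemma~\ref{lem:reduction_link}. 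Under that hypothesis, $\overline{\phi(z)}\neq\infty$ gives $|\phi(z)|\le1$, hence $\phi(\cO_K)\subseteq\cO_K$.

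Items (2) and (3) then follow by citation. Item (2) is literally Lemma~\ref{lem:reduction_link}, whose proof uses (1) and Lemma~\ref{lem:strong_commutation} at $N=1$. Item (3) is Lemma~\ref{lem:good_reduction_tower}: the extension to $\PP^1_{\cO_K}$ induces maps modulo $p^N$ that are compatible by functoriality. The final sentence of the proposition is just a restatement of (2).
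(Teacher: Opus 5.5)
\textbf{Item (1).} Here you depart from the paper, and your reading is the correct one. The paper's proof of (1) only says that $\phi$ extends to a morphism of $\PP^1_{\cO_K}$, ``hence'' $\phi(\cO_K)\subseteq\cO_K$. That inference only gives a self-map of $\PP^1(\cO_K)=\PP^1(K)$. Your example $\phi(z)=1/z$ has strict good reduction but $\phi(p)=1/p\notin\cO_K$, so (1) is false as stated. Your repair is the right one: assume $\overline{\phi}$ has no pole in $\kappa$, and use $\overline{\phi(z)}=\overline{\phi}(\bar z)$. Your first idea, deducing (1) from the comparison with $F$, cannot work. Item (1) is asserted with no relation between $\phi$ and $F$, so the extra hypothesis is unavoidable.

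\textbf{The gap: the repair is not carried to (2) and (3).} You accept both items by citation, but the cited lemmas fail in the same way.

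For (3), the proof of Lemma~\ref{lem:good_reduction_tower} produces maps on $\PP^1(\cO_K/p^N\cO_K)$. These restrict to self-maps of $W_N(\kappa)\cong\cO_K/p^N\cO_K$ only when $\phi(\cO_K)\subseteq\cO_K$. For $\phi=1/z$ the class of $0$ goes to $\infty$, so no tower $W_N(\kappa)\to W_N(\kappa)$ exists. Under your hypothesis, (3) has a short direct proof. Write $\phi=P/Q$ in normalized form.
\begin{itemize}
\item The absence of common zeros of $\overline{P},\overline{Q}$, together with the absence of poles of $\overline{\phi}$ in $\kappa$, gives $|Q(z)|=1$ for all $z\in\cO_K$.
\item Since $P(z)Q(w)-P(w)Q(z)\in(z-w)\cO_K[z,w]$, you get $|\phi(z)-\phi(w)|\le|z-w|$ on $\cO_K$.
\item Hence $\phi$ preserves every congruence modulo $p^N$, and the induced maps $\phi_N$ are automatically compatible.
\end{itemize}

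For (2), the appeal to (1) inside the proof of Lemma~\ref{lem:reduction_link} is harmless, since each direction supplies $\phi(\cO_K)\subseteq\cO_K$ by itself. The real problem is that the forward direction only yields $\overline{\phi}=F$ on $\kappa$. The step ``the coincidence extends to $\PP^1(\kappa)$ because $F(\infty)=\infty$'' is false. Note also that ``no pole in $\kappa$'' and ``$\overline{\phi}(\infty)=\infty$'' are independent conditions, although your proposal links them.

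Take $K=\mathbb{Q}_3$ and $\phi(z)=1/(z^2+1)$, which is $[Y^2:X^2+Y^2]$ in homogeneous coordinates.
\begin{itemize}
\item It has strict good reduction, since $Y^2$ and $X^2+Y^2$ have no common zero.
\item For $z\in\mathbb{Z}_3$ we have $z^2+1\in\mathbb{Z}_3^\times$, because $-1$ is a non-square mod $3$. So $\phi$ interprets $F:=\overline{\phi}|_{\F_3}$ with inclusion at depth $1$.
\item Yet $\overline{\phi}(\infty)=0\neq\infty=F(\infty)$.
\end{itemize}
So (2) must be stated as equality on $\kappa$. Alternatively, add the hypothesis $\overline{\phi}(\infty)=\infty$, as holds for polynomial maps with unit leading coefficient.
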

\begin{proof}
(1) This is the definition of good reduction: $\phi$ extends to a morphism on $\PP^1_{\cO_K}$, hence $\phi(\mathcal{O}_K) \subseteq \mathcal{O}_K$.

(2) This is exactly Lemma~\ref{lem:reduction_link}.

(3) This is exactly Lemma~\ref{lem:good_reduction_tower}.
\end{proof}

\begin{remark}[Arboreal Characterization---Outlook]
\label{rem:arboreal_outlook}
Beyond the baseline established in Proposition~\ref{prop:baseline_selection}, good reduction admits a deeper \emph{Galois/arboreal characterization} along a natural open residual locus: in~\cite{PerezBuendia2025Arboreal} it is shown (in the tame setting) that strict good reduction is equivalent to the triviality of inertia on the \emph{orbital colimit} $\mathcal{G}_{O^{+}(x)}$, a well-defined Galois object encoding the dynamics along a forward orbit. For all integral basepoints lying in a nonempty residual open set (the complement of the reduced postcritical set), this condition is equivalent to the reduced fiber polynomials having unit leading coefficient and unit discriminant. In particular, for such basepoints, all backward preimage extensions are unramified at every level. 

This deeper characterization, while not proven here, provides additional motivation for using good reduction as a selection criterion: it ensures that the induced residue-level dynamics and associated preimage towers remain stable under refinement and do not generate spurious ramification.
\end{remark}

\begin{sloppypar}
Within the space $\mathrm{Int}_\star(F)$ of interpreters of a fixed finite discrete dynamics $F$, we propose \emph{good reduction type} as a selection heuristic, to be combined with geometric invariants (degree, height) and dynamical stratifications (counts of contractive/expansive/indifferent cycles in prescribed balls). Proposition~\ref{prop:baseline_selection} provides the theoretical foundation for this heuristic in the setting of this paper.
\end{sloppypar}

\subsection{The Necessity of a Moduli Space}

The construction provided by Theorem~\ref{thm:runge} proves existence but does not provide a classification. The space of all rational interpreters $\mathrm{Int}_\star(F)$ is an infinite-dimensional object. To make the selection of models systematic, we need a way to organize this space.

Our "Robust Exactness" result (Theorem~\ref{thm:robust_exact}) shows that the set of exact interpreters is an \emph{open} subset of the space of analytic maps. This leads to the following structural result for the space of interpreters (under the fixed-center normalization):

\begin{proposition}[Openness of Multiplier and Dominance Constraints]
\label{prop:openness_multiplier_dominance}
Fix a ball $B(a,r) \subset K$ and let $\cO(B(a,r))$ be endowed with the sup norm $\norm{h}_{B(a,r)} = \sup_{z \in B(a,r)} |h(z)|$. Let $u \in K^\times$ and let $\psi(z) = b + u(z-a)$ be affine on $B(a,r)$. Then there exists $\delta > 0$ such that for every $g \in \cO(B(a,r))$ with $\norm{g - \psi}_{B(a,r)} < \delta$, the following hold:
\begin{enumerate}
\item $|g'(a)| = |u|$;
\item writing the Taylor expansion $g(z) = \sum_{k \ge 0} c_k (z-a)^k$ on $B(a,r)$, one has
\[
\max_{k \ge 2} |c_k| r^{k-1} < |g'(a)|,
\]
hence $g$ has \emph{linear dominance} on $B(a,r)$ and therefore $g(B(a,r))$ is exactly a ball of radius $|g'(a)| r$.
\end{enumerate}
In particular, the constraints $|g'(a)| = |u|$ and linear dominance define an \emph{open} condition in the sup-norm topology on $\cO(B(a,r))$.
\end{proposition}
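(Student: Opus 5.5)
The plan is to repeat the argument of Theorem~\ref{thm:robust_exact}, now localized to a single ball, with an explicit choice of $\delta$. I will take
\[
\delta := |u|\, r ,
\]
which is positive because $u\neq 0$ and $r>0$. Fix $g\in\cO(B(a,r))$ with $\norm{g-\psi}_{B(a,r)}<\delta$, and write $h=g-\psi$. Since $\psi$ is affine, its Taylor coefficients at $a$ are $c_0(\psi)=b$, $c_1(\psi)=u$, and $c_k(\psi)=0$ for $k\ge 2$. Hence
\[
c_1(g)=u+c_1(h),\qquad c_k(g)=c_k(h)\quad (k\ge 2).
\]

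The first step applies Lemma~\ref{lem:cauchy_diff} (equivalently, Lemma~\ref{lem:cauchy} applied to $h$). This gives $|c_k(h)|\,r^k\le \norm{h}<\delta$ for every $k\ge 0$.

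For item (1), take $k=1$: we get $|g'(a)-u|=|c_1(h)|<\delta/r=|u|$. The strict ultrametric inequality then forces $|g'(a)|=|u|$.

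For item (2), take $k\ge 2$: we get $|c_k(g)|\,r^{k-1}=|c_k(h)|\,r^{k-1}<\delta/r=|u|=|g'(a)|$ for each $k$.

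The one point needing care is that item (2) asks for a strict inequality on the supremum over infinitely many $k$. A bound $<|u|$ for each individual $k$ does not by itself give a strict bound on the supremum. To fix this, I will use the uniform bound $|c_k(h)|\,r^{k-1}\le \norm{h}/r$, which holds for all $k$ simultaneously. Since $\norm{h}<\delta$ is a strict inequality on a single fixed number, we get
\[
\sup_{k\ge 2}|c_k(g)|\,r^{k-1}\le \norm{h}/r<|u| .
\]
(Alternatively, one can note that $|c_k|\,r^k\to 0$ for functions analytic on the closed ball, so the maximum is attained.) This is the only subtle step; everything else is routine.

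With linear dominance established, Lemma~\ref{lem:dominance_image} gives $g(B(a,r))=B(g(a),|g'(a)|\,r)$, a ball of radius $|u|\,r$. Finally, the openness assertion follows directly: the set of $g$ satisfying both constraints contains the sup-norm ball of radius $\delta$ around $\psi$. Every affine $\psi$ with $u\neq0$ is such a point, and the same argument, run around any $g_0$ satisfying the constraints in place of $\psi$, shows that the constrained set is open.
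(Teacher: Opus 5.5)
Your proposal is correct and follows essentially the same route as the paper: put $h=g-\psi$, use the Cauchy estimate $|c_k(h)|\,r^k\le\norm{h}_{B(a,r)}<|u|\,r$ to get $|g'(a)-u|<|u|$ and $|c_k(g)|\,r^{k-1}<|u|$ for $k\ge 2$, then invoke Lemma~\ref{lem:dominance_image}; the paper takes any $\delta<|u|r$ where you take $\delta=|u|r$, which makes no difference. Your uniform bound $\sup_{k\ge 2}|c_k(g)|\,r^{k-1}\le\norm{h}_{B(a,r)}/r$ and your remark on openness around an arbitrary constrained $g_0$ make explicit two points the paper's proof leaves implicit; for the latter, one bounds $|c_k(g)|\,r^{k-1}$ by $\max\bigl(|c_k(g_0)|\,r^{k-1},\norm{g-g_0}_{B(a,r)}/r\bigr)$.
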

\begin{proof}
Let $h = g - \psi$. By the non-Archimedean Cauchy estimate on the closed ball $B(a,r)$ (Lemma~\ref{lem:cauchy}; in the dynamical setting see~\cite{BenedettoDynamics}).\footnote{The Archimedean analogue is the Cauchy inequality for holomorphic functions on a disk.} If $h(z) = \sum_{k \ge 0} d_k (z-a)^k$ then
\[
|d_k| \leq \frac{\norm{h}_{B(a,r)}}{r^k} \qquad (k \ge 0).
\]
Choose $\delta > 0$ such that $\delta < |u| r$. If $\norm{h}_{B(a,r)} < \delta$, then
\[
|g'(a) - u| = |d_1| \leq \frac{\norm{h}_{B(a,r)}}{r} < |u|.
\]
By ultrametricity, $|g'(a)| = |u|$, proving (1). For $k \ge 2$ we have
\[
|c_k| = |d_k| \leq \frac{\norm{h}_{B(a,r)}}{r^k} < \frac{|u| r}{r^k} = |u| r^{1-k},
\]
hence $|c_k| r^{k-1} < |u| = |g'(a)|$ for all $k \ge 2$, proving (2). The last claim (image is exactly a ball) follows from Lemma~\ref{lem:dominance_image}.
\end{proof}

\begin{remark}[Value Group Warning]
\label{rem:value_group_warning}
Do \emph{not} invoke discreteness of the value group unless $K/\mathbb{Q}_p$ is finite (and even then it is $p^{\frac{1}{e}\mathbb{Z}}$). For $K = \mathbb{C}_p$ the value group is $p^{\mathbb{Q}}$ and is dense in $\mathbb{R}_{>0}$. The openness argument above uses only ultrametricity, hence works uniformly.
\end{remark}

\begin{corollary}[Stratification of Interpreter Space]
\label{cor:stratification}
For any finite functional graph $G_F$, the space of exact rational interpreters $\mathrm{Int}_{\mathrm{exact}}(F)$ satisfying $|\phi(a_i) - b_{\tau(i)}| \leq t_{\tau(i)}$ is a union of open strata $\mathcal{S}_{\vec{\lambda}}$, where $\vec{\lambda} = (|\lambda_1|, \dots, |\lambda_k|)$ is the vector of multiplier magnitudes at the fixed points. Each stratum $\mathcal{S}_{\vec{\lambda}}$ is stable under $\varepsilon$-perturbations.
\end{corollary}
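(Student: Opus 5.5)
The plan is to make the statement precise first and then derive it from Proposition~\ref{prop:openness_multiplier_dominance} together with Theorem~\ref{thm:robust_exact}. For an exact interpreter $\phi$, the multiplier at a fixed point is controlled only cylinder by cylinder, so I will index strata by $\vec{\lambda}=(|\phi'(a_i)|)_{i:\tau(i)=i}$, the derivative magnitudes at the centers of the self-loop cylinders. I will then prove that this agrees with the magnitudes of the multipliers at genuine fixed points $\alpha\in B_i$. The stratum $\mathcal{S}_{\vec{\lambda}}$ is the set of $\phi\in\mathrm{Int}_{\mathrm{exact}}(F)$ with this vector. The topology is the sup norm on $U=\bigsqcup_i B_i$, restricted to rational functions without poles on $U$ (Corollary~\ref{cor:pole_free_with_control}(1)).

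\textbf{Step 1 (exact interpreters are linearly dominant).} Let $\phi$ be exact and satisfy $|\phi(a_i)-b_{\tau(i)}|\le t_{\tau(i)}$. For each $i$, let $\psi_i(z)=\phi(a_i)+\phi'(a_i)(z-a_i)$ be its linearization. A priori exactness does not force linear dominance, so I will restrict to, or show membership in, the dominant locus. Under linear dominance, Lemma~\ref{lem:dominance_image} gives $|\phi'(a_i)|r_i=t_{\tau(i)}$. The Taylor coefficient bounds give $\norm{\phi-\psi_i}_{B_i}<|\phi'(a_i)|r_i$, and the remark on local similarity gives $|\phi'(z)|=|\phi'(a_i)|$ on all of $B_i$. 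So each $\vec{\lambda}$ is forced: $|\lambda_i|=t_i/r_i$ at every fixed point $\alpha\in B_i$, as in Corollary~\ref{cor:multiplier_stability}. Consequently there is essentially one nonempty stratum per ball system, and the union over $\vec{\lambda}$ is a disjoint cover.

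\textbf{Step 2 (openness and $\varepsilon$-stability).} Apply Proposition~\ref{prop:openness_multiplier_dominance} on each $B_i$ with the affine model $\psi_i$. This yields $\delta_i>0$ such that any $g$ with $\norm{g-\phi}_{B_i}<\delta_i$ satisfies $|g'(a_i)|=|\phi'(a_i)|$ and is linearly dominant. Take $\delta<\min_i\min(\delta_i,t_{\tau(i)})$. Then for such $g$, Lemma~\ref{lem:dominance_image} gives $g(B_i)=B(g(a_i),t_{\tau(i)})$. Moreover $|g(a_i)-b_{\tau(i)}|\le\max(|g(a_i)-\phi(a_i)|,|\phi(a_i)-b_{\tau(i)}|)\le t_{\tau(i)}$, so Lemma~\ref{lem:ball_coincidence} gives exactness. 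The same $\delta$ preserves $\vec{\lambda}$ by Step 1. Hence the $\delta$-ball around $\phi$ lies in $\mathcal{S}_{\vec{\lambda}}$, which proves both openness and stability under $\varepsilon$-perturbations for $\varepsilon<\delta$.

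\textbf{Main obstacle.} The hard part is Step 1's use of linear dominance. An exact interpreter could in principle map $B_i$ onto its target with a degenerate linear term, for instance $|\phi'(a_i)|r_i<t_{\tau(i)}$ with surjectivity coming from a higher-order term. Such maps are exact but not covered by Proposition~\ref{prop:openness_multiplier_dominance}. My first attempt would be to interpret $\mathrm{Int}_{\mathrm{exact}}(F)$ as the exact interpreters that are linearly dominant on each cylinder, which matches Remark~\ref{rem:image_ball} and the interpreters the paper constructs. Failing that, I would handle the general case with a Weierstrass-degree argument. The Newton-polygon/Weierstrass degree of $\phi-\phi(a_i)$ on $B_i$ is locally constant under small sup-norm perturbations, and the ``dominant degree'' term then plays the role of $c_1$. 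This would give open strata indexed by that degree together with the multiplier data.
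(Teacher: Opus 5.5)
Your main line is correct on the linearly dominant locus, and it uses the same tool as the paper. The paper's proof applies Proposition~\ref{prop:openness_multiplier_dominance} on each $B_i$ ``with $u_i$ from the piecewise affine model'' and then partitions by multiplier vectors. Where you differ is in execution, and the differences matter.

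\begin{itemize}
\item \textbf{Anchoring point.} Read literally, anchoring the proposition at the fixed model $\psi$ of Theorem~\ref{thm:analytic_existence} only shows that a sup-norm neighbourhood of $\psi$ lies in one stratum. A dominant exact interpreter with $|\phi'(a_i)-u_i|=|u_i|$ (e.g.\ $\phi'(a_i)=-u_i$ for odd $p$) satisfies $\|\phi-\psi\|_{B_i}\ge t_{\tau(i)}$ by Lemma~\ref{lem:cauchy_diff}, so it falls outside that neighbourhood. Your recentering at $\phi$'s own linearization $\psi_i$ gives openness at every point of the stratum. Make the bridge explicit, since the proposition is phrased for $\|g-\psi_i\|$: $\|g-\psi_i\|_{B_i}\le\max(\|g-\phi\|_{B_i},\|\phi-\psi_i\|_{B_i})<|\phi'(a_i)|r_i$. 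This is exactly why your Step~1 bound is needed.
\item \textbf{Triviality of the strata.} Your Step~1 shows that exactness plus dominance forces $|\phi'(a_i)|=t_{\tau(i)}/r_i$, consistent with Corollary~\ref{cor:multiplier_stability}. So there is exactly one nonempty stratum, and the stratification is trivial in this regime; the paper's formulation obscures this.
\item \textbf{Indexing.} Indexing by $|\phi'(a_i)|$ rather than by multipliers at fixed points is the right normalization. A cylinder with $\tau(i)=i$ need not contain a fixed point of $\phi$: when $B'_i=B_i$, the map $z\mapsto z+c$ with $0<|c|\le r_i$ is dominant and exact on $B_i$ but has no fixed point.
\end{itemize}

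Restricting to the dominant locus is necessary, not merely convenient, so drop the Weierstrass-degree fallback: the statement is false for general exact interpreters. Take one ball $B_1=B(0,1)\subset\mathbb{C}_p$ with $\tau(1)=1$ and $B'_1=B(0,1)$.
\begin{itemize}
\item The map $\phi(z)=z^2$ is an exact interpreter, since square roots exist in $\mathbb{C}_p$. It has fixed points $0$ and $1$ with multipliers $0$ and $2$.
\item For $0<|\eta|<1$, the map $g_\eta(z)=z^2+\eta z$ is still exact. Both roots of $z^2+\eta z-w$ lie in $B(0,1)$ when $|w|\le 1$, because their product has absolute value at most $1$ and their sum has absolute value less than $1$.
\item We have $\|g_\eta-\phi\|_{B_1}=|\eta|$, and the fixed point $0$ of $g_\eta$ has multiplier $\eta\neq 0$.
\end{itemize}
Hence the stratum containing $\phi$ is not open. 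The Weierstrass degree is locally constant under small perturbations, but multiplier magnitudes are not near superattracting points, so no refinement by degree rescues the claim.
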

\begin{proof}
This follows from Proposition~\ref{prop:openness_multiplier_dominance} applied to each ball $B_i$ with $u_i$ from the piecewise affine model. The stratification follows by partitioning the space of exact interpreters according to these invariant vectors.
\end{proof}

\subsection{Hierarchical Equivalence and Moduli Spaces}

To classify interpreters, we must account for coordinate changes. In the $p$-adic setting, the natural group of transformations consists of \emph{affine isometries} that preserve the ball hierarchy.

\begin{definition}[Rigid Isometric Conjugacy of Interpreters]
\label{def:isometric_conjugacy}
Fix a finite family of balls $\mathcal{B} = \{B_i = B(a_i, r_i)\}_{i=1}^N$ in $K$. Let $\mathrm{Iso}(\mathcal{B})$ be the group of \emph{affine isometries}
\[
\sigma(z) = \alpha z + \beta, \qquad \alpha \in K^\times, \quad |\alpha| = 1, \quad \beta \in K,
\]
such that $\sigma$ permutes the family $\mathcal{B}$ (i.e., $\sigma(B_i) = B_{\pi(i)}$ for some permutation $\pi$). Two interpreters $\phi, \psi$ are \emph{isometrically conjugate relative to $\mathcal{B}$} if there exists $\sigma \in \mathrm{Iso}(\mathcal{B})$ such that
\[
\psi = \sigma^{-1} \circ \phi \circ \sigma.
\]
\end{definition}

\begin{lemma}[Conjugacy Invariance of Ball-Dynamics and Multipliers under Affine Isometries]
\label{lem:conjugacy_invariance_affine_isometries}
Let $\sigma(z) = \alpha z + \beta$ with $|\alpha| = 1$. Then:
\begin{enumerate}
\item For every ball $B(c,r)$ one has $\sigma(B(c,r)) = B(\sigma(c), r)$.
\item If $\psi = \sigma^{-1} \circ \phi \circ \sigma$, then for every $x$ where derivatives exist,
\[
\psi'(x) = \phi'(\sigma(x)).
\]
In particular, $|\psi'(x)| = |\phi'(\sigma(x))|$.
\item If $\phi$ satisfies linear dominance on $B(c,r)$, then $\psi$ satisfies linear dominance on $B(\sigma^{-1}(c), r)$, and the interpretation type at each ball (contractive, indifferent, or expansive) is preserved.
\end{enumerate}
\end{lemma}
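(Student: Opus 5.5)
The proof is a direct computation with the explicit formula $\sigma^{-1}(w)=\alpha^{-1}(w-\beta)$, treating the three items in order.

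For (1), note that $\sigma(z)-\sigma(c)=\alpha(z-c)$. Then apply Lemma~\ref{lem:affine_ball} with $u=\alpha$ and $|\alpha|=1$; this gives $\sigma(B(c,r))=B(\sigma(c),|\alpha|r)=B(\sigma(c),r)$ at once.

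For (2), write $\psi(x)=\alpha^{-1}(\phi(\alpha x+\beta)-\beta)$ and differentiate with the chain rule. The derivative is $\alpha^{-1}\phi'(\alpha x+\beta)\alpha=\phi'(\sigma(x))$, which is valid wherever $\phi$ is analytic near $\sigma(x)$. Taking absolute values gives the second claim.

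For (3), which is the substantive part, I expand in Taylor coefficients. Put $c'=\sigma^{-1}(c)$, so $\sigma(z)-c=\alpha(z-c')$. If $\phi(w)=\sum_k c_k(w-c)^k$ on $B(c,r)$, then for $z\in B(c',r)$ (mapped into $B(c,r)$ by item (1)) we get
\[
\psi(z)=\alpha^{-1}\Bigl(\sum_{k\ge0}c_k\alpha^k(z-c')^k-\beta\Bigr).
\]
So the coefficients of $\psi$ at $c'$ are $d_0=\alpha^{-1}(c_0-\beta)$ and $d_k=\alpha^{k-1}c_k$ for $k\ge1$. Since $|\alpha|=1$, we have $|d_k|=|c_k|$ for all $k\ge1$. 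The linear dominance inequality $\max_{k\ge2}|d_k|r^{k-1}<|d_1|$ is therefore literally the same inequality as for $\phi$.

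The statement about interpretation types I read as follows: the targets are transported by $\sigma^{-1}$ as well, so the relevant ball system is $(\sigma^{-1}(\mathcal B),\tau,\sigma^{-1}(\mathcal B'))$, in line with Definition~\ref{def:isometric_conjugacy}. By Lemma~\ref{lem:dominance_image}, $\psi(B(c',r))=B(\psi(c'),|\psi'(c')|r)$. Here $\psi(c')=\sigma^{-1}(\phi(c))$, and $|\psi'(c')|=|\phi'(c)|$ by (2). Using (1) for $\sigma^{-1}$, which is also an affine isometry, we get $\psi(B(c',r))=\sigma^{-1}(\phi(B(c,r)))$.

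Because $\sigma^{-1}$ is a bijection, it preserves strict inclusion, equality and reverse strict inclusion between $\phi(B_i)$ and $B'_{\tau(i)}$. Equivalently, the ratio $\sigma_i$ of Remark~\ref{rem:image_ball} is unchanged, since all radii are preserved. Hence the type at each ball is preserved.

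The only real obstacle is the bookkeeping in (3): one must check that the conjugated coefficients scale by $\alpha^{k-1}$ and not $\alpha^k$, and make precise that the targets are transported along with the source balls. Everything else follows from Lemmas~\ref{lem:affine_ball} and~\ref{lem:dominance_image}.
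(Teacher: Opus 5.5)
Your proposal is correct and follows the same route as the paper: a direct affine/ultrametric computation for (1), the chain rule with $\sigma'=\alpha$ for (2), and invariance of the Taylor data on balls for (3). Your explicit computation $d_k=\alpha^{k-1}c_k$, giving $|d_k|=|c_k|$ for all $k\ge 1$, supplies the higher-order-coefficient check that the paper's proof of (3) only asserts via the linear coefficient. Your transport of the target balls by $\sigma^{-1}$ likewise makes precise the interpretation-type claim, which the paper leaves implicit.
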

\begin{proof}
(1) Immediate from ultrametricity and $|\alpha| = 1$: $|z-c| \leq r \iff |\alpha z + \beta - (\alpha c + \beta)| = |\alpha| |z-c| \leq r$.

(2) By the chain rule for analytic maps and the fact that $\sigma'(x) = \alpha$ is constant:
\[
\psi'(x) = (\sigma^{-1})'(\phi(\sigma(x))) \cdot \phi'(\sigma(x)) \cdot \sigma'(x) = \alpha^{-1} \cdot \phi'(\sigma(x)) \cdot \alpha = \phi'(\sigma(x)).
\]

(3) Combine (1)--(2) with the definition of linear dominance via Taylor coefficients on balls. Because $\sigma$ is an isometry, radii are preserved; because (2) preserves the absolute value of the linear coefficient, the strict inequality $\max_{k \ge 2} |c_k| r^{k-1} < |c_1|$ is invariant under conjugacy by $\sigma$.
\end{proof}

\begin{definition}[Hierarchical Equivalence]
\label{def:hierarchical_equivalence}
Two interpreters $\phi_1, \phi_2 \in \mathrm{Int}_\star(F)$ are \emph{hierarchically equivalent} if they are isometrically conjugate relative to the family of state cylinders $\mathcal{B} = \{B_i\}$.
\end{definition}

\begin{corollary}[Invariance of Semantics]
\label{lem:equivalence_invariance}
Hierarchical equivalence preserves the interpretation type at each ball (contractive, indifferent, or expansive) and the magnitudes of the multipliers $|\lambda_i|$.
\end{corollary}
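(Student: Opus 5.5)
The plan is to reduce the corollary to Lemma~\ref{lem:conjugacy_invariance_affine_isometries}, applied one ball and one fixed point at a time.

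Let $\phi_1,\phi_2\in\mathrm{Int}_\star(F)$ be hierarchically equivalent. By Definition~\ref{def:hierarchical_equivalence} there is $\sigma(z)=\alpha z+\beta$ with $|\alpha|=1$ and a permutation $\pi$ such that $\sigma(B_i)=B_{\pi(i)}$ for all $i$, and $\phi_2=\sigma^{-1}\circ\phi_1\circ\sigma$.

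\textbf{Step 1 (ball images).} By Lemma~\ref{lem:conjugacy_invariance_affine_isometries}(1), $\sigma$ and $\sigma^{-1}$ send closed balls to closed balls of the same radius, and they are bijections. Hence
\[
\phi_2(B_i)=\sigma^{-1}\bigl(\phi_1(B_{\pi(i)})\bigr).
\]
Since a bijection preserves the relations $\subsetneq$, $=$ and $\supsetneq$, the containment relation between $\phi_2(B_i)$ and its target is the same as the relation between $\phi_1(B_{\pi(i)})$ and the $\sigma$-image of that target.

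\textbf{Step 2 (targets).} This is the step I expect to be the main obstacle. One must check that the $\sigma$-image of the target of $B_i$ is the target of $B_{\pi(i)}$, so that the types at $i$ for $\phi_2$ and at $\pi(i)$ for $\phi_1$ are compared against matching balls. In the hierarchical setting the targets are the state cylinders themselves ($B'_j=B_j$), so $\sigma$ permutes them along with $\mathcal{B}$. Compatibility with $\tau$ then reduces to $\tau\circ\pi=\pi\circ\tau$. I would try to derive this from the fact that both maps interpret the same $F$ with disjoint source balls: $\phi_2(B_i)$ meets $B_{\tau(i)}$, while it also equals $\sigma^{-1}\phi_1(B_{\pi(i)})$, which meets $\sigma^{-1}(B_{\tau\pi(i)})=B_{\pi^{-1}\tau\pi(i)}$. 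Matching these uses the nesting property (Lemma~\ref{lem:nesting}) together with disjointness, at least when the type is contractive or indifferent. If this derivation fails, I would state explicitly that the conjugating isometry is understood to commute with $\tau$, which is implicit in ``relative to the family of state cylinders.''

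Once Step 2 is settled, the claim ``type at $i$ for $\phi_2$ equals type at $\pi(i)$ for $\phi_1$'' follows from Step 1. Under linear dominance the same conclusion also follows from part (3) of the lemma through the ratio $\sigma_i$ of Remark~\ref{rem:image_ball}, since radii and $|\phi'|$ are preserved.

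\textbf{Step 3 (multipliers).} If $\alpha_2$ is a fixed point of $\phi_2$, then $\sigma(\alpha_2)$ is a fixed point of $\phi_1$. Lemma~\ref{lem:conjugacy_invariance_affine_isometries}(2) gives $\phi_2'(\alpha_2)=\phi_1'(\sigma(\alpha_2))$, so the multiplier magnitudes $|\lambda_i|$ coincide, with indices matched via $\pi$.
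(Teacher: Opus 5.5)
\paragraph{Verdict: Step~2 rests on a false requirement.}
Reducing to Lemma~\ref{lem:conjugacy_invariance_affine_isometries} is the paper's route; its proof is the one-line citation of that lemma. Your Steps~1 and~3 are fine. The gap is Step~2. You make type preservation depend on $\tau\circ\pi=\pi\circ\tau$. That identity does not follow from the hypotheses, and it is not needed.

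Your derivation of it only works at balls where the type is contractive or indifferent; at expansive balls it can fail. Take $p=2$, depth $2$, $B_j=B(j,1/4)$, $\sigma(z)=z+1$ (so $\pi(j)=j+1 \bmod 4$), and $\tau(j)=j \bmod 2$.
\begin{itemize}
\item The piecewise affine map $\phi_1(z)=j+(z-j)/2$ on $B_j$ has $\phi_1(B_j)=B(j,1/2)\supsetneq B_{\tau(j)}$. The same holds for any rational map $\varepsilon$-close to it, by the argument of Theorem~\ref{thm:robust_exact}.
\item The conjugate $\phi_2=\sigma^{-1}\circ\phi_1\circ\sigma$ also satisfies $\phi_2(B_j)=\sigma^{-1}(B(j+1,1/2))=B(j,1/2)$.
\item So both maps interpret $\tau$ and are hierarchically equivalent, yet $\tau(\pi(1))=0\neq 2=\pi(\tau(1))$.
\end{itemize}
Your fallback, declaring that $\sigma$ commutes with $\tau$, is not part of Definition~\ref{def:isometric_conjugacy}. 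That definition only asks $\sigma$ to permute $\mathcal{B}$. With the fallback you would prove the corollary only for a strictly smaller equivalence relation.

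\paragraph{The missing observation.}
You never need the two targets to coincide. Put $X=\phi_2(B_i)=\sigma^{-1}(\phi_1(B_{\pi(i)}))$, $T=B_{\tau(i)}$, and $T'=\sigma^{-1}(B_{\tau(\pi(i))})=B_{\pi^{-1}\tau\pi(i)}$.
\begin{itemize}
\item By your Step~1, the type of $\phi_2$ at $i$ is the relation of $X$ to $T$. The type of $\phi_1$ at $\pi(i)$ is the relation of $X$ to $T'$.
\item Since both maps are interpreters, $X$ meets $T$ and $X$ meets $T'$.
\item If $T=T'$ there is nothing to prove.
\item Otherwise $T\cap T'=\varnothing$, so $X$ is contained in neither. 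Here $X$ is a ball, as the type classification presupposes. By Lemma~\ref{lem:nesting}, $X\supsetneq T$ and $X\supsetneq T'$, so both types are expansive.
\end{itemize}

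\paragraph{Alternative: your ratio remark already suffices.}
This route also needs no commutation. Under linear dominance, $|\phi_1'|$ is constant on $B_{\pi(i)}$, so $|\phi_2'(a_i)|=|\phi_1'(\sigma(a_i))|=|\phi_1'(a_{\pi(i)})|$. Also $r_i=r_{\pi(i)}$, and all state cylinders share the radius $p^{-n}$. Hence the ratio $|\phi'(a)|\,r/t$ of Remark~\ref{rem:image_ball} takes the same value for $\phi_2$ at $i$ and for $\phi_1$ at $\pi(i)$. That ratio determines the type for an interpreter. This is what the paper's appeal to part~(3) of the lemma implicitly uses.

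With either replacement for Step~2, your proof is complete.
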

\begin{proof}
This follows immediately from Lemma~\ref{lem:conjugacy_invariance_affine_isometries}.
\end{proof}

\begin{remark}[Larger Conjugacy Groups]
\label{rem:bigger_group_needs_analyticity}
If you want a larger conjugacy group (e.g., general ball-preserving homeomorphisms), you must \emph{not} use derivatives. To keep derivative-based invariants (multipliers, dominance), restrict to analytic isometries; affine isometries are the safest minimal choice.
\end{remark}

\begin{sloppypar}
The definitions above (isometric conjugacy, hierarchical equivalence) are \emph{preliminary steps} toward a moduli theory of $p$-adic interpreters: they fix the notion of equivalence and the invariants (multipliers, dominance) that should be preserved. A systematic development---e.g., description of $\mathrm{Int}_\star(F)/{\sim}$ as a geometric object, dimension, explicit stratification---is left for future work.
\end{sloppypar}
 \section{Worked Examples and Arithmetic Applications}
\label{sec:examples}

\subsection{A Concrete Case Study: Cycle and Transient Dynamics}
The passage from a functional graph to its $p$-adic interpretation and analytical synthesis is illustrated by a system with four states $X = \{0, 1, 2, 3\}$. The existence and robustness results used here are proved in Sections~5--6; the Dynamic CRT and its factorization are proved in Section~7. This section illustrates them with explicit transition graphs and arithmetic examples.

\subsubsection{Discrete Dynamics}
Define the transition map $F: X \to X$ as follows:
\begin{equation}
\label{eq:ex_discrete_rule}
F(0) = 1, \quad F(1) = 0, \quad F(2) = 1, \quad F(3) = 3.
\end{equation}
The functional graph $G_F$ (see Figure~\ref{fig:functional_graph}) consists of a limit cycle of length 2 $\{0, 1\}$, a transient state $\{2\}$, and a fixed point $\{3\}$.

\begin{figure}[ht]
    \centering
    \includegraphics[width=0.55\textwidth]{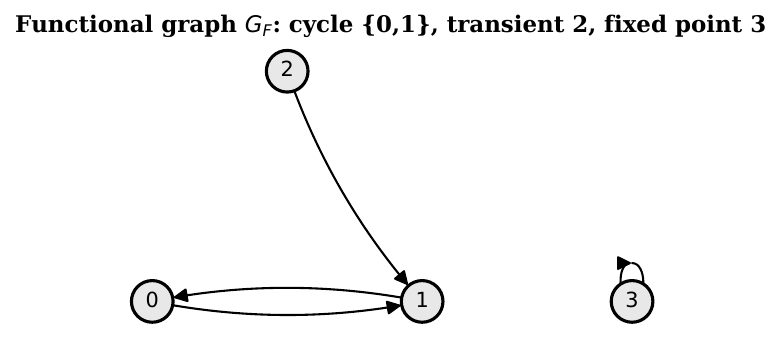}
    \caption{Functional graph $G_F$ with a cycle $\{0,1\}$, a transient state $2$, and a fixed point $3$ (self-loop at $3$, by convention in Section~\ref{sec:encoding}).}
    \label{fig:functional_graph}
\end{figure}

\subsubsection{\texorpdfstring{$p$}{p}-adic Encoding}
We use $p=2$ and $n=2$ levels. The states are mapped to the centers $\{0, 1, 2, 3\} \subset \mathbb{Z}_2$ and associated with cylinders of radius $2^{-2} = 1/4$:
\begin{equation}
\label{eq:ex_cylinders}
B_0 = 4\mathbb{Z}_2, \quad B_1 = 1 + 4\mathbb{Z}_2, \quad B_2 = 2 + 4\mathbb{Z}_2, \quad B_3 = 3 + 4\mathbb{Z}_2.
\end{equation}

\subsubsection{Analytic Interpreter Construction}
Using Theorem~\ref{thm:analytic_existence}, we construct a piecewise affine exact interpreter $\psi$. For simplicity, we choose it to be isometric on each ball ($|u_i| = 1$):
\begin{equation}
\label{eq:ex_psi_def}
\psi(z) = \begin{cases}
1 + z & z \in B_0 \\
(z-1) & z \in B_1 \\
1 + (z-2) = z-1 & z \in B_2 \\
3 + (z-3) = z & z \in B_3
\end{cases}
\end{equation}
Note that for $z \in B_2$, $|\psi(z) - 1| = |z-2| \leq 1/4$, so $\psi(B_2) = B_1$.

\subsubsection{Global Rational Interpreter}
By Theorem~\ref{thm:runge} and Lemma~\ref{lem:runge_finite_constraints}, there exists a rational function $\phi \in \mathbb{Q}_2(z)$ such that $\|\phi-\psi\|_{U}<\varepsilon$ for some $\varepsilon<1/4$ on $U=\bigsqcup_i B_i$. In particular, $\phi$ satisfies the same finite-level constraints on the centers and realizes the intended transitions on the prescribed cylinders (Sections~5--6). The Runge step is non-effective in general with respect to degree/height bounds; for a complementary construction with an explicit formula see~\cite{PBNC2025} and the discussion in Section~\ref{sec:discussion}.

For a minimal example, take the 2-state system $X = \{0, 1\}$ and the swap map $F(0) = 1$, $F(1) = 0$. With $p=2$ and $n=1$, the cylinders are $B_0 = 2\mathbb{Z}_2$ and $B_1 = 1 + 2\mathbb{Z}_2$ (radius $1/2$). The piecewise affine interpreter $\psi(z) = 1 - z$ satisfies $\psi(0) = 1$ and $\psi(1) = 0$. In this case $\psi$ is already a polynomial, so we may take $\phi(z) = 1 - z$ directly. Verification: if $z \in B_0$ then $z$ is even, so $1 - z$ is odd and $1 - z \in B_1$; if $z \in B_1$ then $z$ is odd, so $1 - z$ is even and $1 - z \in B_0$. Thus $\phi(B_0) = B_1$ and $\phi(B_1) = B_0$ (exact interpretation). Moreover $\phi'(z) = -1$, so $|\phi'(z)|_2 = 1$ (indifferent/isometric everywhere). For larger systems (e.g.\ the 4-state example above), explicit formulas with simple coefficients are not always available and Runge approximation (Lemma~\ref{lem:runge_finite_constraints}) is required; the existence of some rational interpreter is guaranteed by Theorem~\ref{thm:runge}.

\subsubsection{Robustness and Multipliers}
By Theorem~\ref{thm:robust_exact}, any rational function $\phi$ that is $1/4$-close to $\psi$ and satisfies $|\phi(a_i) - b_{F(i)}| \leq 1/4$ will be an exact interpreter, i.e., $\phi(B_i) = B_{F(i)}$ for all $i$. 
Suppose we select an interpreter $\phi$ such that its fixed point in $B_3$ is $\alpha = 3$. The multiplier $\lambda = \phi'(3)$ determines the stability. If $|\lambda|_2 < 1$, then $B_3$ is an \emph{analytical sink}. This illustrates how the $p$-adic lifting adds a "metric depth" to the combinatorial graph.

\subsection{Dynamics on Witt Vectors: The case of \texorpdfstring{$GF(4)$}{GF(4)}}
Consider a system with 4 states forming the finite field $\mathbb{F}_4$. We use the ring of Witt vectors $W(\mathbb{F}_4)$, which is the ring of integers $\mathcal{O}_K$ of the unramified extension of $\mathbb{Q}_2$ of degree 2. The Teichmüller representatives $\Teich(\mathbb{F}_4) = \{0, 1, \omega, \omega^2\}$ where $\omega^2 + \omega + 1 = 0$ in $\mathcal{O}_K$ provide the centers for our state cylinders.

A discrete map $F: \mathbb{F}_4 \to \mathbb{F}_4$ is interpreted by a 2-adic map $\phi$ on $\mathcal{O}_K$. For example, the Frobenius-type map $F(\xi) = \xi^2$ is exactly interpreted by $\phi(z) = z^2$. 
\begin{itemize}
    \item \textbf{Good Reduction:} The map $\phi(z) = z^2$ has good reduction modulo 2, as its reduction $\overline{\phi}(\xi) = \xi^2$ is a well-defined map of degree 2 on $\mathbb{F}_4$.
    \item \textbf{Stability:} The 2-cycle $\{\omega, \omega^2\}$ is contractive because the multiplier of the second iterate satisfies:
\begin{equation}
\label{eq:gf4-stability}
|(\phi^2)'(\omega)|_2 = |4 \omega^3|_2 = |4|_2 = 1/4 < 1.
\end{equation}
Note that $|\omega|_2 = 1$ since $\omega$ is a Teichmüller lift (a root of unity in the unramified extension).
Furthermore, at depth $N \ge 2$, the action on cylinders is exact since linear dominance is satisfied on balls of radius $r \le 1/4$.
\end{itemize}
This example shows that by requiring our interpreters to have \textbf{good reduction} in the sense of Benedetto~\cite{BenedettoReduction}, we ensure that the continuous dynamics "collapses" perfectly to the discrete logic $F$, while the $p$-adic multipliers provide a rigorous measure of the system's robustness to continuous perturbations.

\subsection{Arithmetic Case Studies: Frobenius and Verschiebung}
\label{subsec:arithmetic-examples}

The Witt vector setting provides two examples of interpreters that are purely arithmetic in nature, yet illustrate the semantics perfectly.

\subsubsection{Example A: Frobenius as an Exact Interpreter}
Let $K$ be an unramified extension of $\Qp$ and let $\sigma \in \Gal(K/\Qp)$ be the Frobenius automorphism. By Serre~\cite{SerreLocalFields}, $\sigma$ is an isometry of $\cO_K$ and induces the $p$-power map $\xi \mapsto \xi^p$ on the residue field.
If we define the discrete rule $F$ as the $p$-power Frobenius on the alphabet $\kappa$, then $\sigma$ satisfies:
\[
\pi_N \circ \sigma = F \circ \pi_N
\]
for every depth $N$. Thus, $\sigma$ is an \textbf{interpreter with inclusion} for all resolution levels. Since $\sigma$ is an isometry, it preserves the radius of balls, making it an \textbf{exact interpreter}: $\sigma(\mathbb{B}_N(x)) = \mathbb{B}_N(F(x))$. This is the cleanest example of a continuous lifting that exactly tracks a discrete rule.

\subsubsection{Example B: Verschiebung as a Shift-Register Interpreter}
Let $V: W_N(\kappa) \to W_{N+1}(\kappa)$ be the Verschiebung operator. In the ring of Witt vectors, the identity $p \cdot \mathbb{1} = V \circ F$ holds, where $F$ is the Frobenius map. Let $\phi(z) = p z$ be the multiplication by $p$ on $\cO_K$. Then the induced action on the truncated vectors satisfies:
\begin{equation}
\label{eq:verschiebung-identity}
\pi_{N+1}(p z) = V(F(\pi_N(z))).
\end{equation}
In terms of digit expansions, $\phi$ shifts the sequence of states and applies the Frobenius automorphism to the existing states. This provides an exact interpretation of a \emph{twisted} shift-register update rule.

\subsection{Modular Squaring and Analytical Stability}
Consider the modular squaring map $F(x) = x^2 \pmod{p^n}$ on $\mathbb{Z}/p^n\mathbb{Z}$. Lifted to $\phi(z) = z^2$ on cylinders $B_x$, the analytical interpreter reveals a stability structure invisible to the graph $G_F$:
\begin{itemize}[leftmargin=2em]
    \item \textbf{The contractive fixed point at $0$:} For any $p$, $z=0$ is superattracting ($\lambda=0$). The cylinder $B_0$ is a contractive basin.
    \item \textbf{The $p=2$ vs $p>2$ Dichotomy:} At $x=1$, the multiplier is $\lambda = 2$. If $p=2$, $|2|_2 = 1/2 < 1$ (contractive); if $p>2$, $|2|_p = 1$ (indifferent/isometric).
\end{itemize}

\subsection{Parameter Spaces: The \texorpdfstring{$p$}{p}-adic \texorpdfstring{$z^2+c$}{z squared plus c} family}
Consider the family $\phi_c(z) = z^2 + c$ with $c \in \mathbb{Z}_p$. The following result shows a fundamental rigidity in the discrete structure.

\begin{proposition}[Combinatorial Rigidity]
\label{prop:rigidity}
Let $c_1, c_2 \in \mathbb{Z}_p$. If $c_1 \equiv c_2 \pmod{p^n}$, then the functional graphs induced by $\phi_{c_1}$ and $\phi_{c_2}$ on the cylinders $\{B_x\}_{x \in \mathbb{Z}/p^n\mathbb{Z}}$ are identical.
\end{proposition}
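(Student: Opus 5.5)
The plan is to reduce the claim to a congruence computation and then read off the functional graph through the fibre description of cylinders. First I make precise what ``the functional graph induced by $\phi_c$ on the cylinders'' means. Since $\phi_c(z)=z^2+c$ has coefficients in $\mathbb{Z}_p$, Lemma~\ref{lem:poly_cp} (or directly the identity $x^2-y^2=(x-y)(x+y)$) shows that $\phi_c$ preserves congruences modulo $p^n$ on $\mathbb{Z}_p$. Hence $\phi_c$ descends to a well-defined map
\[
F_c:\mathbb{Z}/p^n\mathbb{Z}\to\mathbb{Z}/p^n\mathbb{Z},\qquad F_c(x)=x^2+c \bmod p^n,
\]
with $\pi^{(n)}\circ\phi_c=F_c\circ\pi^{(n)}$. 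By Lemma~\ref{lem:cylinders_as_fibers}, this says exactly that $\phi_c(B_x)\subseteq B_{F_c(x)}$ for every $x$. So the induced functional graph on the cylinders $\{B_x\}$ is $G_{F_c}$, and by Lemma~\ref{lem:nesting} together with the partition property in Lemma~\ref{lem:cylinder_partition}, the cylinder $B_{F_c(x)}$ is the unique one meeting $\phi_c(B_x)$.

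Next I compare the two parameters. If $c_1\equiv c_2 \pmod{p^n}$, then for every $z\in\mathbb{Z}_p$ we have $|\phi_{c_1}(z)-\phi_{c_2}(z)|=|c_1-c_2|\le p^{-n}$, so $\pi^{(n)}(\phi_{c_1}(z))=\pi^{(n)}(\phi_{c_2}(z))$. It follows that $F_{c_1}=F_{c_2}$ as maps on $\mathbb{Z}/p^n\mathbb{Z}$. Both graphs have the same vertex set and the same edges $(x,F_c(x))$, so they are identical, not merely isomorphic.

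The only step needing care is fixing the meaning of ``induced graph''. I must check that each cylinder has a single, well-defined successor, which holds because the image lands inside one fibre, and that this successor does not depend on $\phi_c$ beyond its reduction. Both facts follow from integrality of the coefficients, so I expect no real obstacle. As a remark I would add that the interpretation types (via $|\phi_c'(a)|=|2a|$) also do not depend on $c$, although the statement does not require this.
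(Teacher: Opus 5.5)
Your proof is correct and follows the same route as the paper: the key step is that $\phi_{c_1}(z)-\phi_{c_2}(z)=c_1-c_2\in p^n\mathbb{Z}_p$ for every $z\in\mathbb{Z}_p$, so both maps send each residue class mod $p^n$ to the same class. Your extra check that the cylinder graph is well defined (congruence preservation of $z^2+c$, then Lemma~\ref{lem:cylinders_as_fibers}) is something the paper's proof takes for granted.
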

\begin{proof}
The transition between cylinders $B_x \to B_y$ is determined by the condition $\phi(x) \equiv y \pmod{p^n}$. For any $x \in \mathbb{Z}_p$, we have $\phi_{c_1}(x) = x^2 + c_1 \equiv x^2 + c_2 = \phi_{c_2}(x) \pmod{p^n}$. Thus, $x \bmod p^n$ maps to the same residue class under both maps.
\end{proof}

However, the \emph{internal} dynamics can bifurcate. For $p=3, n=1$, if $c=0$, $z=0$ is superattracting. If $c=3$, the fixed point $\alpha$ satisfies $\alpha^2 - \alpha + 3 = 0$; its multiplier $|\lambda|_3 = |2\alpha|_3 = 1/3$ shows that $c=3$ provides a different analytic profile.

\subsection{Algebraic Systems: DCRT and PDS}
\label{subsec:dcrt_examples}
So far we have built interpreters for systems on residue rings $\mathbb{Z}/p^n\mathbb{Z}$ or, via Witt vectors, on $\mathbb{Z}/m\mathbb{Z}$ for a single modulus $m$ (Sections 2--6, 8). When $m$ factors into coprime moduli, the Dynamic CRT (Section~\ref{subsec:dcrt}) decomposes the \emph{discrete} dynamics horizontally: the global functional graph factors as a product of local graphs. This is the combinatorial shadow of the ring-theoretic Chinese Remainder Theorem under dynamic hypotheses: $\text{CRT (rings)}\quad \leadsto \quad \text{DCRT (functional graphs)}$. The DCRT can be read in two directions: \emph{reduction} (global $\to$ local) and \emph{lifting} (local $\to$ global). The two examples below illustrate each.

\subsubsection{DCRT ``going'' (reduction)}
Start from a global map $F(x) = x^2 + 1$ on $\mathbb{Z}/6\mathbb{Z}$. Since $F$ is congruence-preserving (it is induced by a polynomial), Theorem~\ref{thm:dcrt} implies that $G_F \cong G_{F_2} \times G_{F_3}$, where $F_2$ and $F_3$ are the maps induced by $F$ on $\mathbb{Z}/2\mathbb{Z}$ and $\mathbb{Z}/3\mathbb{Z}$ by reduction modulo $2$ and modulo $3$. Figure~\ref{fig:dcrt_graphs} illustrates this \emph{reduction}: the graph over $\mathbb{Z}/6\mathbb{Z}$ (top) is decomposed into the two local graphs (bottom). Each vertex $x \in \mathbb{Z}/6\mathbb{Z}$ reduces to $(x \bmod 2, x \bmod 3)$; the bicolor coding (left half = residue mod $2$, right half = residue mod $3$) makes this visible, and each edge in $G_F$ projects to the corresponding edge in $G_{F_2}$ and in $G_{F_3}$.
\begin{figure}[ht]
    \centering
    \includegraphics[width=\textwidth]{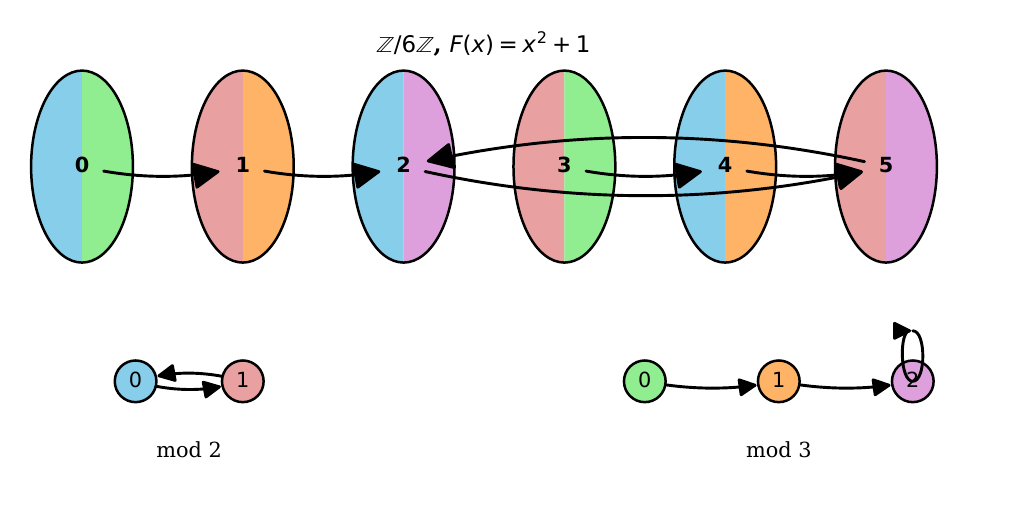}
    \caption{DCRT ``going'' (reduction): the global graph $G_F$ on $\mathbb{Z}/6\mathbb{Z}$ (top) reduces to the local graphs $G_{F_2}$ (mod $2$) and $G_{F_3}$ (mod $3$) (bottom). Vertices are bicolored by residues modulo $2$ (left half) and modulo $3$ (right half), making the reduction $x\mapsto(x\bmod 2,\;x\bmod 3)$ visible.}
    \label{fig:dcrt_graphs}
\end{figure}

\subsubsection{DCRT ``return'' (lifting)}
Conversely, start from two \emph{arbitrary} local graphs: one on $\mathbb{Z}/3\mathbb{Z}$ and one on $\mathbb{Z}/4\mathbb{Z}$. The classical CRT gives $\mathbb{Z}/12\mathbb{Z} \cong \mathbb{Z}/3\mathbb{Z} \times \mathbb{Z}/4\mathbb{Z}$ via $\Theta$. For any $f_3: \mathbb{Z}/3\mathbb{Z} \to \mathbb{Z}/3\mathbb{Z}$ and $f_4: \mathbb{Z}/4\mathbb{Z} \to \mathbb{Z}/4\mathbb{Z}$, we \emph{lift} them to a single map $f: \mathbb{Z}/12\mathbb{Z} \to \mathbb{Z}/12\mathbb{Z}$ by defining $f(x) = \Theta^{-1}(f_3(x \bmod 3), f_4(x \bmod 4))$: each $x$ is sent to the unique element of $\mathbb{Z}/12\mathbb{Z}$ whose reduction mod $3$ is $f_3(x \bmod 3)$ and whose reduction mod $4$ is $f_4(x \bmod 4)$. Figure~\ref{fig:dcrt_arbitrary} illustrates this \emph{lifting}: the two local graphs (bottom) are assembled into the global graph $G_f$ on $\mathbb{Z}/12\mathbb{Z}$ (top). Vertices in $G_f$ are again bicolor (left = residue mod $3$, right = residue mod $4$), and each edge in $G_f$ is the unique lift of the corresponding pair of edges in the factors.
\begin{figure}[htbp]
    \centering
    \includegraphics[width=\textwidth]{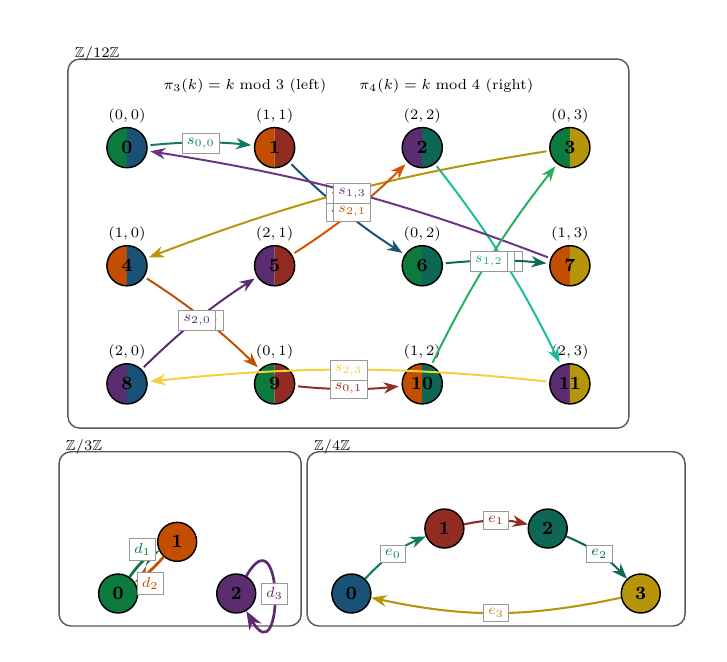}
    \caption{DCRT ``return'' (lifting): the local graphs $G_{f_3}$ (mod $3$) and $G_{f_4}$ (mod $4$) (bottom) are lifted by $\Theta^{-1}$ to the global \emph{functional digraph} $G_f$ on $\mathbb{Z}/12\mathbb{Z}$ (top): each vertex has exactly one outgoing edge $x \to f(x)$, where $f$ is the unique map that reduces to $f_3$ mod $3$ and $f_4$ mod $4$. Vertices are bicolored by residues modulo $3$ and modulo $4$. Edges in $\mathbb{Z}/3\mathbb{Z}$ and $\mathbb{Z}/4\mathbb{Z}$ are labeled $d_i$ and $e_j$ respectively; each edge in the global graph is labeled $s_{i,j}$ for the unique common lift (gluing) of $d_i$ and $e_j$.}
    \label{fig:dcrt_arbitrary}
\end{figure}
Under reduction, fixed points go to fixed points and periodic orbits (cycles) to cycles; under lifting, the same holds. In the product picture, a vertex $(x_1, x_2)$ is a \emph{fixed point} of $f$ if and only if $x_1$ and $x_2$ are fixed by $f_1$ and $f_2$ respectively; it lies on a \emph{cycle} of length $k$ if and only if each $x_i$ lies on a cycle of $f_i$ whose length divides $k$ (the period in the product is the lcm of the local periods). So, for example, a fixed point in one factor and a 2-cycle in the other lift to a 2-cycle in the global graph.

\subsubsection{Interpolation and reduction}
One can recover the polynomial formulation by asking for polynomial maps that realize the given functional graphs. For each of the three rings $R \in \{\mathbb{Z}/3\mathbb{Z}, \mathbb{Z}/4\mathbb{Z}, \mathbb{Z}/12\mathbb{Z}\}$, there exist polynomials $P_R(X) \in \mathbb{Z}[X]$ (obtained, for instance, by Lagrange interpolation on the vertices) such that the induced map $x \mapsto P_R(x) \bmod |R|$ coincides with the prescribed transition. For a congruence-preserving map $f$ on $\mathbb{Z}/12\mathbb{Z}$ that arises from $(f_3, f_4)$ via CRT, a polynomial $P_{12}(X)$ that induces $f$ on $\mathbb{Z}/12\mathbb{Z}$ satisfies $P_{12}(x) \equiv f_3(x) \pmod{3}$ and $P_{12}(x) \equiv f_4(x) \pmod{4}$ for all $x \in \mathbb{Z}/12\mathbb{Z}$; thus reducing $P_{12}$ modulo $3$ (respectively modulo $4$) yields a polynomial whose induced map on $\mathbb{Z}/3\mathbb{Z}$ (respectively $\mathbb{Z}/4\mathbb{Z}$) is exactly $f_3$ (respectively $f_4$). This illustrates how the DCRT decomposition is reflected at the level of polynomial interpreters: the global interpreter on $\mathbb{Z}/12\mathbb{Z}$ reduces correctly to the local ones.

\subsubsection{Relation to \texorpdfstring{$p$}{p}-adic interpreters}
The examples above are at the \emph{discrete} level: graphs and maps on residue rings. In the setting of this paper, the goal is to realize such discrete dynamics as the shadow of continuous $p$-adic maps (interpreters) on cylinders or balls (Sections 2, 4). We work over a complete non-Archimedean field $K$ (e.g.\ $K = \mathbb{C}_p$); the CRT decomposition is used at the discrete level, while the existence of a $p$-adic interpreter is obtained by constructing a ball system with $m$ balls and applying Runge (Theorem~\ref{thm:runge}, Lemma~\ref{lem:runge_finite_constraints}). \textbf{Reduction} then has an interpreter counterpart: a rational map $\phi$ that interprets a congruence-preserving map $F$ on $\mathbb{Z}/m\mathbb{Z}$ (with $m$ composite) induces, by reduction modulo each prime power dividing $m$, maps that interpret the local factors $F_i$; Lemma~\ref{lem:strong_commutation} and the good-reduction link (Section~\ref{subsec:reduction}) make this precise. In the reduction example with $F(x)=x^2+1$ on $\mathbb{Z}/6\mathbb{Z}$, one can even take $\varphi(z)=z^2+1$ as an interpreter (sketch): over $\mathbb{Z}_p$ ($p=2$ or $3$), take balls $B(a, p^{-n})$; since $\varphi$ has good reduction and is 1-Lipschitz on $\mathbb{Z}_p$, we have $\varphi(B(a,p^{-n})) \subseteq B(\varphi(a), p^{-n})$, so $\varphi$ interprets the reduction of $F$ with inclusion; Lemma~\ref{lem:good_reduction_tower} yields the compatible tower. \textbf{Lifting} corresponds to the converse: when the discrete map $f$ on $\mathbb{Z}/12\mathbb{Z}$ is assembled from $f_3$ and $f_4$ via CRT, the \emph{product} of local $p$-adic interpreters (one per prime factor) on the space $\prod_i \cO_i$ induces $f$ on $\mathbb{Z}/12\mathbb{Z}$ (Proposition~\ref{prop:crt_product}); existence of \emph{some} interpreter for $f$ on a single field (e.g.\ a union of 12 balls in $\mathbb{C}_p$ for a chosen prime $p$) is still guaranteed by Theorem~\ref{thm:runge}. Thus the DCRT examples are not isolated: they show how the discrete decomposition (reduction and lifting) aligns with the way interpreters are constrained by good reduction and by the hierarchical structure of Section~\ref{sec:encoding}. The following diagram distinguishes the \emph{discrete} CRT lift (product of residue rings) from the \emph{$p$-adic} interpreter (maps on balls in $K$):
\[
\begin{tikzcd}[column sep=2.2em, row sep=1.8em]
  \text{(balls in }K) \rar{\varphi} \dar[swap]{\scriptstyle \bmod p_i^{k_i}} & \text{(balls in }K) \dar{\scriptstyle \bmod p_i^{k_i}} \\
  \mathbb{Z}/p_i^{k_i}\mathbb{Z} \rar{F_{p_i^{k_i}}} & \mathbb{Z}/p_i^{k_i}\mathbb{Z}
\end{tikzcd}
\qquad
\text{and}\qquad
\mathbb{Z}/m\mathbb{Z} \cong \prod_i \mathbb{Z}/p_i^{k_i}\mathbb{Z}.
\]
Here DCRT enters as a statement about functional graphs: if $F$ is congruence-preserving, the global graph is isomorphic to the product of the local graphs. The hypothesis is not decorative: Example~\ref{ex:non_cp} (Section~\ref{subsec:dcrt}) shows that when $F$ is not congruence-preserving, the conclusion of the DCRT fails.

The examples above highlight two complementary arithmetic decompositions of discrete dynamics:
(1)~decomposition across coprime moduli via the Chinese remainder theorem, and
(2)~refinement along a fixed prime via the inverse system $(\mathbb{Z}/p^n\mathbb{Z})_{n\ge 1}$.
The second direction leads to profinite (inverse-limit) dynamics on $\Zp$
and to the question of passing from ball-level to pointwise interpretation; see
Appendix~\ref{app:profinite_addendum}.

In \emph{Polynomial Dynamical Systems} (PDS) common in regulatory modeling,\footnote{Such systems appear in modeling cell-fate in organisms like \emph{C. elegans}, where discrete variables represent signaling levels.} selecting an interpreter with good reduction and $|\lambda|<1$ yields a continuous system that preserves the discrete phenotype as a stable attractor.
 \section{Discussion and Future Outlook}
\label{sec:discussion}

\subsection{Structural meaning of the results}
The framework of Sections~2--6 establishes that every finite functional graph admits a rational interpreter on a finite union of balls (Theorem~\ref{thm:runge}, Corollary~\ref{cor:exact_via_runge_interpolation}), and that exact cylinder mapping is stable under perturbation when linear dominance holds (Theorem~\ref{thm:robust_exact}). Sections~7--9 add two orthogonal axes: \emph{horizontal} (CRT decomposition over coprime moduli) and \emph{vertical} (Witt/depth towers). Interpretation with inclusion is equivalent to commutativity with the reduction maps (Lemma~\ref{lem:cylinders_as_fibers}); for composite $m$, the global dynamics on $\prod_i \cO_i$ is the product of the local dynamics (Proposition~\ref{prop:crt_product}). Good reduction then acts as a selection principle among the many possible interpreters (Proposition~\ref{prop:baseline_selection}). Conceptually, the results say that discrete dynamics on residue rings and Witt vectors are realizable as the reduction of rational $p$-adic dynamics, with a well-defined stability structure (contractive/indifferent/expansive) and a clean decomposition when the discrete map is congruence-preserving.

\subsection{Relation to other work}
The present paper proves existence of rational interpreters from purely combinatorial data (finite functional graph or ball system) via Runge approximation (Theorem~\ref{thm:runge}) of a piecewise affine model (Theorem~\ref{thm:analytic_existence}); the proof does not provide constructive or uniform degree/height bounds. In~\cite{PBNC2025}, $\varepsilon$-gluing constructs a global rational map that $\varepsilon$-approximates prescribed local analytic or rational maps on finitely many disjoint balls, under the hypothesis that $f_i(B) \subset B_1(0)$ for all $i$, where $B$ is the union of the balls; the proof of Theorem~4.2 there gives an explicit formula $F_\varepsilon(z)=\sum_i f_i(z)h_i(z)$. Remark~\ref{rem:alternative_methods} summarizes the $\varepsilon$-approximation statement. Theorem~\ref{thm:robust_exact} shows that exact ball mapping is an open condition under linear dominance. Automata theory studies 1-Lipschitz or tower-compatible maps on $\Zp$ that need not be rational~\cite{AnashinKhrennikov, Poonen2014}; here rational (or rigid-analytic) realizations are required so that degree, height, and reduction are defined and Proposition~\ref{prop:baseline_selection} and linear dominance apply. Uniform degree or height bounds are not provided (Remark~\ref{rem:degree_bound_limits}). The Chinese remainder theorem appears in other graph-theoretic or geometric contexts (e.g.\ symmetries of directed graphs~\cite{Foldes1980}, or momentum graphs and restriction maps in equivariant cohomology~\cite{CarrellKaveh2024}); here the graph is the \emph{functional graph} of a single map on $\mathbb{Z}/m\mathbb{Z}$, and the DCRT factorizes the \emph{dynamics} (and hence that graph) under the congruence-preserving hypothesis, in service of $p$-adic interpreter construction.

\subsection{Limitations and open directions}
The following are implied by the results or by the discussion in Appendix~\ref{app:profinite_addendum}.

\begin{enumerate}[label=(\roman*), leftmargin=2em]
\item \textbf{Effective bounds:} Uniform degree/height bounds for rational interpreters in terms of $\varepsilon$ and the number of balls are not provided by the existence proof; the question remains open (Remark~\ref{rem:degree_bound_limits}, Section~\ref{sec:closing_the_loop}).
\item \textbf{Profinite limit:} Appendix~\ref{app:profinite_addendum} proves existence of a continuous (1-Lipschitz) limit and, under a Cauchy condition, of an analytic pointwise interpreter (Proposition~\ref{prop:cauchy_analytic_limit}). Rationality of the limit and existence of compatible rational sequences remain open (Remark~\ref{rem:regularity_limit}).
\item \textbf{Moduli of interpreters:} The space $\Int_\star(F)$ is infinite-dimensional; stratification by multiplier vectors is open (Proposition~\ref{prop:openness_multiplier_dominance}, Corollary~\ref{cor:stratification}). Parameterization and dimension bounds are not established here.
\item \textbf{DCRT scope:} Congruence-preservation is necessary for the product decomposition (Example~\ref{ex:non_cp}); how restrictive this is for typical discrete models remains open (Remark~\ref{rem:dcrt_scope}).
\end{enumerate}

\subsection{Radius-zero interpretation}
The main results are at \emph{finite} resolution: interpreters act on balls of radius $p^{-n}$ and realize a given graph at that level. The passage to pointwise (radius-zero) semantics on $\Zp$ is treated in Appendix~\ref{app:profinite_addendum}: existence of a unique continuous limit and, under additional hypotheses, of an analytic limit is proved there; rationality of the limit is left open. The present manuscript establishes existence, robustness, and structural decomposition of \emph{finite-resolution} rational interpreters (Theorem~\ref{thm:runge}, Theorem~\ref{thm:robust_exact}, Proposition~\ref{prop:crt_product}); the profinite limit and its rationality are discussed in the appendix and remain open where indicated.
 
\appendix
\section{Profinite dynamics and radius-zero interpretation}
\label{app:profinite_addendum}

\subsection{Scope}
\label{app:scope}
The core results above solve an inverse problem at \emph{finite resolution}:
given a finite functional graph encoded by a finite partition of $\Zp$ into residue balls
of radius $p^{-n}$, we construct rational or analytic maps $\phi$ whose action on balls realizes the
prescribed discrete dynamics (Sections~5--6).
This appendix records what is needed to pass from such \emph{ball-level} semantics to a
\emph{pointwise} (``radius-zero'') model for profinite dynamics, i.e.\ dynamics specified by a compatible
tower modulo $p^n$.
Existence of an interpreter $\phi_n$ for each $n$ does \emph{not} by itself imply a limiting interpreter on $\Zp$; see below.

\subsection{Two notions of interpretation}
\label{app:two_notions}
Ball-level interpretation prescribes inclusions or equalities of the form
$\phi(B(a,p^{-n})) \subseteq B(F_n(a),p^{-n})$ for finitely many balls at a fixed radius (Section~\ref{sec:semantics}).
In contrast, \emph{radius-zero interpretation} requires exact pointwise equality on $\Zp$:
\[
\phi(z)=f(z)\quad \text{for all } z\in \Zp.
\]
The latter can be viewed as the limit of ball-semantics as radii $p^{-n}\to 0$.
This is no longer an interpreter in the sense of Definition~\ref{def:interpreter}; it is a \textbf{pointwise realization} of the inverse-limit dynamics.
In the sense of continuous interpolation compatible with the tower, Route~1 (below) gives the unique such map $f$ (the $1$-Lipschitz limit determined by the compatible tower); Route~2 asks when this map can be realized by a rigid-analytic function in $\cO(B(0,1))$.

\subsection{Horizontal versus vertical}
\label{app:horizontal_vertical}
Fix a compatible tower $(X_n,f_n,\pi_{n+1,n})_{n\ge 1}$ with surjective bonding maps and $\pi_{n+1,n}\circ f_{n+1}=f_n\circ \pi_{n+1,n}$. We refer to the dynamics within each level $n$ as \emph{horizontal}, and to the maps $\pi_{n+1,n}$ (reduction downward) and their possible lifts (upward, when available) as \emph{vertical}. Note that vertical lifting is \emph{not} a global morphism in general save under additional hypotheses; it is a lifting of points or cycles (see Theorem~\ref{thm:hensel_exact_cycle} and Remark~\ref{rem:hensel_local_only} in Section~\ref{subsec:hensel_cycles}), not necessarily a section of the inverse system.

\subsection{Profinite dynamics and the unique continuous lift (Route 1)}
\label{app:route1}
Let $R_n=\mathbb{Z}/p^n\mathbb{Z}$ and $\pi_{n+1,n}:R_{n+1}\to R_n$ be reduction.
A family $F_n:R_n\to R_n$ is \emph{compatible} if
$\pi_{n+1,n}\circ F_{n+1}=F_n\circ \pi_{n+1,n}$ for all $n\ge 1$.
By the universal property $\Zp\simeq \varprojlim R_n$, such a tower induces a unique map
$f:\Zp\to \Zp$ satisfying $f(x)\bmod p^n = F_n(x\bmod p^n)$ for all $n$.
Moreover $f$ is $1$-Lipschitz: formally, $x \equiv y \pmod{p^n}$ implies $f(x) \equiv f(y) \pmod{p^n}$ for all $n$ by compatibility, so $|f(x)-f(y)|_p \le p^{-n}$ whenever $|x-y|_p \le p^{-n}$, hence $|f(x)-f(y)|_p \le |x-y|_p$; see also Corollary~\ref{rem:prop_tower}.

Define the locally constant lifts $f^{\mathrm{lc}}_n:\Zp\to\Zp$ by
\[
f^{\mathrm{lc}}_n(z):=\iota_n\!\left(F_n(z\bmod p^n)\right),
\]
where $\iota_n:R_n\hookrightarrow \Zp$ is the standard embedding with image
$\{0,1,\dots,p^n-1\}$.
Then $f^{\mathrm{lc}}_n$ is constant on each residue ball of radius $p^{-n}$, and compatibility yields
\[
\|f^{\mathrm{lc}}_{n+1}-f^{\mathrm{lc}}_n\|_{\infty,\Zp}\le p^{-n}.
\]
Hence $(f^{\mathrm{lc}}_n)$ converges uniformly to $f$.
This produces a unique \emph{continuous} (indeed $1$-Lipschitz) pointwise model of the tower.
Uniform limits of locally constant functions are continuous, but typically \emph{nowhere analytic}; so Route~1 does not yield an interpreter in the analytic/rational sense of the core of the paper.

\subsection{When does one obtain an analytic pointwise interpreter? (Route 2)}
\label{app:route2}
Let $K$ be a complete non-Archimedean field containing $\Qp$ and let
$\cO(B(0,1))$ denote the Banach $K$-algebra of rigid analytic functions on the closed unit ball, with sup norm $\|\cdot\|_{\infty}$ (see, e.g., \cite{BGR} or \cite{BenedettoDynamics}).\footnote{The Archimedean analogue is the algebra of holomorphic functions on the closed unit disk, complete in the sup norm.}
Runge-type or $\varepsilon$-gluing results (e.g., Theorem~\ref{thm:runge}, \cite{PBNC2025}) give \emph{existence at each level}, but not necessarily compatible across levels. So the obstacle is not ``approximating'' but ``approximating \emph{compatibly}.'' The genuinely hard part is \emph{producing} a sequence $(\phi_n)$ inside this algebra that is both level-$n$ correct and Cauchy in sup norm; compatibility across levels is extra structure and is left as an open direction unless a selection principle is proved.

\begin{proposition}[Sufficient Cauchy criterion for analytic radius-zero interpretation]
\label{prop:cauchy_analytic_limit}
Assume a compatible tower $(F_n)$ on $R_n=\mathbb{Z}/p^n\mathbb{Z}$, with induced $1$-Lipschitz
limit map $f:\Zp\to \Zp$.
Suppose there exists a sequence $\phi_n\in \cO(B(0,1))$ such that:
\begin{enumerate}
\item (\emph{Level-$n$ correctness}) for all $z\in \Zp$, one has
$\phi_n(z)\equiv f(z)\pmod{p^n}$ (equivalently, $\phi_n$ induces $F_n$ on $R_n$);
\item (\emph{Uniform Cauchy control}) $\|\phi_{n+1}-\phi_n\|_{\infty,\Zp}\le C\,p^{-n}$
for some constant $C$ independent of $n$.
\end{enumerate}
Then $(\phi_n)$ converges uniformly on $\Zp$ to a limit $\phi\in \cO(B(0,1))$,
and $\phi$ interprets $f$ pointwise on $\Zp$, i.e.\ $\phi(z)=f(z)$ for all $z\in \Zp$.
\end{proposition}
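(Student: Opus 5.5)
The plan is a telescoping/completeness argument followed by an ultrametric squeeze; there are three steps.

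\emph{Step 1: $(\phi_n)$ is Cauchy.} Write $\phi_N=\phi_1+\sum_{n=1}^{N-1}(\phi_{n+1}-\phi_n)$. By hypothesis (2), the $n$-th increment has norm at most $C\,p^{-n}$. The ultrametric inequality then gives $\|\phi_M-\phi_N\|\le \max_{N\le n<M}C\,p^{-n}=C\,p^{-N}$ for $M>N$, with no geometric-series summation needed. So $(\phi_n)$ is Cauchy.

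\emph{Step 2: the limit exists in $\cO(B(0,1))$.} Since $\cO(B(0,1))$ is a Banach $K$-algebra, completeness yields a limit $\phi\in\cO(B(0,1))$ with $\|\phi-\phi_N\|_\infty\le C\,p^{-N}$. Uniform convergence implies pointwise convergence, so $\phi(z)=\lim_n\phi_n(z)$ for every $z\in\Zp\subset B(0,1)$.

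\emph{Step 3: pointwise identification.} Fix $z\in\Zp$ and $n\ge1$. Hypothesis (1) gives $|\phi_n(z)-f(z)|\le p^{-n}$, using that $f(z)\in\Zp$ and $\phi_n(z)\in\cO_K$ so that the congruence modulo $p^n$ means exactly this estimate. Step 2 gives $|\phi(z)-\phi_n(z)|\le C\,p^{-n}$. The ultrametric inequality yields
\[
|\phi(z)-f(z)|\le \max(1,C)\,p^{-n},
\]
and letting $n\to\infty$ gives $\phi(z)=f(z)$. The Cauchy constant $C$ only affects the rate, not the conclusion. As a by-product, $\phi$ is an analytic function whose restriction to $\Zp$ is the $1$-Lipschitz limit $f$ of Corollary~\ref{rem:prop_tower}; level-$n$ correctness of $\phi$ itself also follows from the same estimate whenever $C\le 1$.

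\emph{Main obstacle.} The delicate point is Step 2, namely which norm hypothesis (2) is measured in. If $\|\cdot\|_{\infty,\Zp}$ is the sup over the $K$-points of the closed unit ball, equivalently the Gauss/spectral norm of $\cO(B(0,1))$ when $K$ is algebraically closed or the norm is read as the Banach norm of the algebra, then Step 2 is immediate from completeness. If instead it is literally the sup over $\Zp$ only, it can be strictly smaller than the algebra norm. For example, $z^p-z$ has sup $\le p^{-1}$ on $\Zp$ but Gauss norm $1$, so Cauchy on $\Zp$ does not force convergence in $\cO(B(0,1))$.

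I would first read the hypothesis in the Banach norm of $\cO(B(0,1))$, consistent with how the algebra was introduced just before the statement, and state that explicitly. With only $\Zp$-control, one still gets a continuous uniform limit on $\Zp$ that agrees with $f$, by Step 3 verbatim. Analyticity of that limit, however, would need an extra argument, for instance Mahler/Amice-type coefficient control. I would flag this in a remark rather than attempt it.
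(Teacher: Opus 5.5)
Your proposal is correct and follows essentially the same route as the paper's proof: a Cauchy estimate, then completeness of $\cO(B(0,1))$, then pointwise identification with $f$ via level-$n$ correctness. Your reading of hypothesis (2) in the norm of the full ball $B(0,1)$ is exactly the reading the paper adopts in its remark after the statement, your $z^p-z$ example correctly shows why the literal $\Zp$-sup would not suffice, and your $\max(1,C)\,p^{-n}$ squeeze is slightly more careful than the paper's direct claim that $\phi(z)\equiv f(z)\pmod{p^n}$ for all $n$ (which literally needs $C\le 1$, though the conclusion $\phi=f$ on $\Zp$ is unaffected).
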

\begin{remark}[Why condition (2) is needed]
Condition (1) alone gives $\|\phi_{n+1}-\phi_n\|_{\infty,\Zp}\le p^{-n}$ (since both agree with $f$ modulo $p^n$ on $\Zp$). For the limit to lie in $\cO(B(0,1))$, the sequence must be Cauchy in the sup norm over the \emph{full} closed unit ball; when (2) is taken with the norm over $B(0,1)$, it provides this and yields $\phi\in \cO(B(0,1))$.
\end{remark}
\begin{proof}
The uniform Cauchy bound implies that $(\phi_n)$ is Cauchy in the sup norm on $\Zp$. By completeness of the rigid-analytic algebra $\cO(B(0,1))$ (see~\cite{BGR}; in the dynamical setting~\cite{BenedettoDynamics})\footnote{In the Archimedean case, the same conclusion holds for holomorphic functions on the disk.} the sequence converges to a limit $\phi\in \cO(B(0,1))$. Level-$n$ correctness gives $\phi(z)\equiv f(z)\pmod{p^n}$ for all $n$, hence $\phi(z)=f(z)$ on $\Zp$.
\end{proof}

\paragraph{Remark on rationality.}
Even if each $\phi_n$ can be chosen rational (by Runge-type constructions on finite unions of balls),
uniform convergence alone does not force the limit to remain rational unless one imposes
\emph{uniform complexity bounds} (e.g.\ bounded degree and controlled poles in a fixed compact set).
Thus, a genuine ``rational radius-zero lifting theorem'' requires extra structure and is left as an
open direction.

\paragraph{Periods, Hensel, and rigidity.}
For an analytic $\phi$, lifting a point of exact period $m$ is governed by Hensel applied to
$g_m(x)=\phi^{\circ m}(x)-x$.
Solving $g_m(x)=0$ only ensures $\operatorname{per}(x)\mid m$; rigidity of the exact period (and lifting an
entire $m$-cycle without collapsing or splitting) typically requires the non-degeneracy condition
$(\phi^{\circ m})'(a)\not\equiv 1\pmod p$.
In the parabolic case $(\phi^{\circ m})'(a)\equiv 1\pmod p$, cycle lengths may grow along the tower
(e.g.\ $\phi(x)=x+1$ on $\mathbb{Z}_2$); see Corollary~\ref{rem:prop_tower}.

\begin{remark}[Regularity and rationality of the limit]
\label{rem:regularity_limit}
Formally: the construction yields a continuous pointwise interpreter; analyticity holds under the Cauchy criterion of Proposition~\ref{prop:cauchy_analytic_limit}; rationality is not asserted and remains an open problem requiring uniform complexity bounds.
\end{remark}

In summary, this appendix formalizes the passage from ball-level to pointwise (radius-zero) interpretation: Route 1 yields a unique continuous ($1$-Lipschitz) limit; Route 2 gives a sufficient Cauchy criterion for an analytic interpreter (Proposition~\ref{prop:cauchy_analytic_limit}). The core existence and robustness results of the paper (Sections~5--6) concern \emph{finite} functional graphs.
 
\section*{Acknowledgments}
The author acknowledges prior joint work with Victor Nopal-Coello and \'Angela Fuquen during their postdoctoral and visiting stays at CIMAT M\'erida (Victor as a postdoctoral researcher, \'Angela as a visitor and postdoc of the project cited in the Funding section below), which the author hosted and which motivated part of this work.

\section*{Statements and Declarations}

\subsection*{Funding}
This work was partially supported by the SECIHTI Fronteras de la Ciencia project ``Modelos matem\'aticos y computacionales no convencionales para el estudio y an\'alisis de problemas relevantes en Biolog\'ia'' (Grant CF 2019/217367).

\subsection*{Competing interests}
The author declares no competing interests.

\subsection*{Data availability}
Data sharing is not applicable to this article as no datasets were generated or analysed during the current study.

\subsection*{Author contributions}
The author conceived the work, carried out the analysis, and wrote the manuscript.

\bibliography{references}

\end{document}